\documentclass[a4paper]{amsart}
\usepackage[latin1]{inputenc}
\usepackage{amsmath}
\usepackage{amssymb}
\usepackage{mathrsfs}
\usepackage{amsthm}
\usepackage{graphicx}
\usepackage[pdftex]{color}
\usepackage{paralist}
\usepackage[shortlabels]{enumitem}
\usepackage{adjustbox}
\usepackage[hyperpageref]{backref}
\usepackage[pagebackref]{hyperref}
\renewcommand*{\backref}[1]{}  
   \renewcommand*{\backrefalt}[4]{
      \ifcase #1 
         Not cited.
      \or
         Cited on page #2.
      \else
         Cited on pages #2.
      \fi}
\usepackage{comment}
\usepackage[arrow, matrix, curve]{xy}

\usepackage{tikz} \usetikzlibrary{matrix,patterns,shapes,decorations.pathmorphing,decorations.pathreplacing,calc,arrows,arrows.meta, positioning} 
\usepackage{tikz-cd} 
\usepackage{tkz-graph}

\newcommand{\old}[1]{{\color{red}#1}}

\newtheorem*{corollary*}{Corollary}
\newtheorem{theorem}{Theorem}[section]

\newtheorem{corollary}[theorem]{Corollary}
\newtheorem{lemma}[theorem]{Lemma}
\newtheorem{proposition}[theorem]{Proposition}
\newtheorem{observation}[theorem]{Observation}

\newtheorem{problem}[theorem]{Problem}

\newtheorem*{claim*}{Claim}

\theoremstyle{definition}
\newtheorem{definition}[theorem]{Definition}

\newtheorem{main conjecture}[theorem]{Main Conjecture}
\newtheorem*{theorem }{Theorem}
\newtheorem{remark}[theorem]{Remark}
\newtheorem{conjecture}[theorem]{Conjecture}
\newtheorem{example}[theorem]{Example}

\theoremstyle{remark}

\numberwithin{equation}{section}

\makeatletter
\renewcommand*\env@matrix[1][\arraystretch]{%
  \edef\arraystretch{#1}%
  \hskip -\arraycolsep
  \let\@ifnextchar\new@ifnextchar
  \array{*\c@MaxMatrixCols c}}
\makeatother

\renewcommand{\mod}{\operatorname{mod}}
\newcommand{\proj}{\operatorname{proj}}
\newcommand{\inj}{\operatorname{inj}}

\newcommand{\Ext}{\operatorname{Ext}}

\newcommand{\End}{\operatorname{End}}

\newcommand{\fidim}{\operatorname{fidim}}

\newcommand{\fpdim}{\operatorname{fpdim}}

\newcommand{\Hom}{\operatorname{Hom}}
\newcommand{\add}{\operatorname{\mathrm{add}}}

\newcommand{\rad}{\operatorname{\mathrm{rad}}}

\renewcommand{\mod}{\operatorname{mod}}

\newcommand{\Z}{\mathbb{Z}}

\newcommand{\RHom}{\operatorname{\mathsf{R}Hom}}
\newcommand{\Lotimes}{\otimes^\mathsf{L}}

\newcommand{\CC}{\mathcal{C}}
\renewcommand{\AA}{\mathcal{A}}

\newcommand{\Cok}{\operatorname{Cok}}

\newcommand{\Image}{\operatorname{Im}}
\newcommand{\Db}{\mathrm{D}^{\mathrm{b}}}
\newcommand{\Kb}{\mathrm{K}^{\mathrm{b}}}
\newcommand{\C}{\mathrm{C}}
\newcommand{\Cb}{\mathrm{C}^{\mathrm{b}}}

\newcommand{\domdim}{\operatorname{domdim}}
\newcommand{\codomdim}{\operatorname{codomdim}}
\newcommand{\idim}{\operatorname{idim}}

\newcommand{\pdim}{\operatorname{pdim}}
\newcommand{\gldim}{\operatorname{gldim}}

\begin{document}
\title[Fractionally Calabi--Yau algebras and cluster tilting]{Fractionally Calabi--Yau algebras and cluster tilting}

\date{\today}

\subjclass[2010]{16G10, 16E65}

\keywords{}

\author[Chan]{Aaron Chan}
\address[Chan]{Graduate School of Mathematics, Nagoya University, Furocho, Chikusaku, Nagoya 464-8602, Japan}
\email{aaron.kychan@gmail.com}

\author[Iyama]{Osamu Iyama}
\address[Iyama]{Graduate School of Mathematical Sciences, University of Tokyo, 3-8-1 Komaba Meguro-ku Tokyo 153-8914, Japan}
\email{iyama@ms.u-tokyo.ac.jp}

\author[Marczinzik]{Ren\'{e} Marczinzik}
\address[Marczinzik]{Mathematical Institute of the University of Bonn, Endenicher Allee 60, 53115 Bonn, Germany}
\email{marczire@math.uni-bonn.de}

\dedicatory{}

\begin{abstract}
We show that the class of twisted fractionally Calabi--Yau algebras of finite global dimension coincides with the stable endomorphism algebras of $d$-cluster tilting modules over $d$-representation-finite algebras. 
This is an application of our main result stating that an algebra $A$ of finite global dimension is twisted fractionally Calabi--Yau if and only if there exists $i$ such that the replicated algebra $A^{(i)}$ is a higher Auslander algebra if and only if there exist infinitely many $i$  such that $A^{(i)}$ is a higher Auslander algebra. This gives a new connection between the study of higher Auslander--Reiten theory and twisted fractionally Calabi--Yau algebras, and provides a new construction of large classes of higher Auslander algebras and higher representation-finite algebras. 
We give several applications such as an explicit characterisation of twisted $\frac{n}{2}$-Calabi--Yau algebras, and a triangle equivalence between the bounded derived category of a twisted fractionally Calabi--Yau algebra of finite global dimension and the $\mathbb{Z}$-graded stable module category of an associated higher preprojective algebra.
\end{abstract}
\maketitle
\tableofcontents

\section{Introduction}

\subsection{Fractionally Calabi--Yau algebras and stable higher Auslander algebras}
A triangulated category $\mathcal{T}$ with Serre functor $\nu$ is called \emph{Calabi--Yau} of Calabi--Yau dimension $m$ if $\nu$ is isomorphic to the shift $[m]$ for some $m>0$. We refer for example to \cite{Ke1} for a survey on Calabi--Yau triangulated categories. Fractionally Calabi--Yau categories were introduced by Kontsevich \cite{Kon} as a generalisation of Calabi--Yau categories as follows: A triangulated category $\mathcal{T}$ with Serre functor $\nu$ is called \emph{fractionally Calabi--Yau} of dimension $(m,\ell)$ if $\nu^\ell$ is isomorphic to the shift $[m]$ for some $\ell >0$.
The concept of fractionally Calabi--Yau categories found several applications in algebraic geometry, representation theory and mathematical physics, we refer for example to \cite{CDIM,CC,FK,HI,HIMO,Ke2,Ke3,KLM1,KLM2,K}.

We focus in this article on finite dimensional $K$-algebras $A$, with $K$ a field. Such an algebra $A$ is called \emph{Iwanaga--Gorenstein} if $\idim A_A = \idim _{A}A<\infty$. This is known to be equivalent to the condition that the perfect derived category $\Kb(\proj A)$ has a Serre functor, see for example \cite[Theorem 6.4.6]{Kr}.

\begin{definition}
A finite dimensional Iwanaga--Gorenstein algebra $A$ over a field $K$ is \emph{$(m,\ell)$-fractionally Calabi--Yau} if $\Kb(\proj A)$ is fractionally Calabi--Yau of dimension $(m,\ell)$. More generally, it is called  \emph{twisted $(m,\ell)$-fractionally Calabi--Yau} if there is an isomorphism between $\nu^\ell$ and $[m]$ up to a twist by an automorphism of $A$ \cite{HI,CDIM}.
\end{definition}

When it cannot lead to confusion, we also follow the usual convention to write $\frac{m}{\ell}$-Calabi--Yau instead of the more precise notion of $(m,\ell)$-fractionally Calabi--Yau.
Of special importance are algebras of finite global dimension, where $\Kb(\proj A)$ coincides with the bounded derived category $\Db(\mod A)$.
Many important classes of finite dimensional algebras are twisted fractionally Calabi--Yau such as Dynkin type path algebras, selfinjective algebras, certain $d$-canonical algebras 
and incidence algebras of certain lattices, we refer to \cite{MY,CDIM,Chap,G,HI,Kle,R,Ye} for those examples and more. 

In this article we give a new relation between twisted fractionally Calabi--Yau algebras and higher Auslander--Reiten theory.
We recall the following notion.

\begin{definition}
Let $A$ and $B$ be finite dimensional algebras, and $d\ge1$.
\begin{enumerate}
\item We call $A$ \emph{$d$-Auslander algebra} if $\gldim A \leq d+1 \leq \domdim A$.
\item We call $M\in\mod B$ \emph{$d$-cluster tilting} if
\begin{align*}
\add M&= \{X \in \mod B \mid \Ext_B^i(X,M)=0 \ \text{for} \ i=1,...,d-1 \}\\
&=\{X \in \mod B \mid \Ext_B^i(M,X)=0 \ \text{for} \ i=1,...,d-1 \}.
\end{align*}
In this case $\End_B(M)$ is called a \emph{$d$-Auslander algebra} of $B$ and $\underline{\End}_B(M)$ is called a \emph{stable $d$-Auslander algebra} of $B$.
\end{enumerate}
\end{definition}

These two definitions of $d$-Auslander algebras are actually equivalent. This is a consequence of the \emph{higher Auslander correspondence} between $d$-Auslander algebras with algebras admitting a $d$-cluster tilting modules \cite{I1}.
For $d=1$, this is the classical \emph{Auslander correspondence} between ($1$-)Auslander algebras and representation-finite algebras
\cite[Chapter VI.5]{ARS}.
Studies of these notions have grown in recent years to their own subfields, and are known to be connected to various areas in representation theory and homological algebra; we refer to, for example, \cite{BMR,BMRRT,IY,I1,I2}.

The following class of algebras are especially important in higher Auslander-Reiten theory:
\begin{definition}
An algebra $B$ is called a \emph{$d$-representation-finite algebra} for $d \geq 1$ if $\gldim B \leq d$\footnote{The condition $\gldim B\leq d$ is not assumed in some references, e.g.\ \cite{DI}.} and there exists a $d$-cluster tilting module $M$ in $\mod B$.
\end{definition}

In this case, $M$ is unique up to additive equivalence, and hence $d$-Auslander algebras of $B$ are unique up to Morita equivalence \cite{I2}.
The $d$-representation-finite algebras generalise the classical hereditary algebras of Dynkin type, which correspond to the case when $d=1$, and include important families such as the higher Auslander algebras of type $A_n$ that appear in various areas in mathematics, see for example \cite{DJL,OT,Wi}.  $d$-representation-finite algebras enjoy many nice properties, for instance, each of them has an associated higher preprojective algebra \cite{IO} which is finite-dimensional and selfinjective, and that $\add M$ is a so-called $d$-abelian category in the sense \cite{J}.


Our main result of this article gives a surprising new homological characterisation of twisted fractionally Calabi-Yau algebras of finite global dimension via stable higher Auslander algebras:

\begin{theorem}\label{theorem in intro} {\rm (Theorem \ref{stableendomainresult})}
For an algebra $A$ of finite global dimension, the following conditions are equivalent.
\begin{enumerate}[\rm(1)]
    \item $A$ is Iwanaga--Gorenstein and twisted fractionally Calabi--Yau.
    \item There exists a finite dimensional $d$-representation-finite algebra $B$ for some $d \geq 1$ such that $A$ is isomorphic to the stable $d$-Auslander algebra of $B$.    
\end{enumerate}
\end{theorem}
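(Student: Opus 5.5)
The plan is to derive the theorem from the replicated-algebra characterisation announced in the abstract, Theorem~\ref{stableendomainresult}. That result says: for $A$ of finite global dimension, $A$ is twisted fractionally Calabi--Yau if and only if some replicated algebra $A^{(i)}$ is a higher Auslander algebra. The bridge to statement (2) is a precise dictionary between the replicated algebra $A^{(i)}$ and the $d$-Auslander algebra of a $d$-representation-finite algebra $B$.

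For (1)$\Rightarrow$(2), choose $i$ with $\Gamma:=A^{(i)}$ a $d$-Auslander algebra, where $d$ is determined by $i$ and $\gldim A$. By the higher Auslander correspondence \cite{I1}, $\Gamma\cong\End_B(M)$ for some algebra $B$ and $d$-cluster tilting module $M$. Here $B=e\Gamma e$, with $e$ the idempotent corresponding to the projective-injective $\Gamma$-modules.

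Two things then need checking.
\begin{enumerate}[(a)]
\item $\gldim B\le d$. The projective-injective summands of $\Gamma$ should be exactly the top copy of $DA$ in the replicated algebra, so $B$ is a copy of $A$ or of a closely related triangular algebra. I would use the explicit structure of $A^{(i)}$ as a triangular matrix algebra with diagonal $A$ and off-diagonal $DA$ to read off $\gldim B$.
\item $\underline{\End}_B(M)\cong A$. Killing the idempotent $e$ removes precisely the projective summands of $M$. In $A^{(i)}$ these correspond to one copy of $A$ on the diagonal, and the quotient $\Gamma/\Gamma e\Gamma$ should be the replicated algebra with one block removed.
\end{enumerate}
I suspect the correct choice is $B\cong A^{(i-1)}$-type data, and in general the stable part is $A$ itself. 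This requires choosing $i$ (or reindexing) so that the quotient is exactly one block. Theorem~\ref{stableendomainresult} gives infinitely many valid $i$, which should let me pick a convenient one.

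For (2)$\Rightarrow$(1), start with $B$ $d$-representation-finite and $M$ $d$-cluster tilting, and set $A=\underline{\End}_B(M)$. I would show directly that $\Db(\mod A)$ is twisted fractionally Calabi--Yau, in two steps.
\begin{itemize}
\item First, $\underline{\add}M$ is equivalent to a $d$-cluster tilting subcategory of $\Db(\mod B)$ modulo shifts. Concretely, $\add\{\nu_d^{-j}B[dk]\}$ is the $d$-cluster tilting subcategory $\mathcal U$ of Iyama's theory \cite{I2}, and $\nu_d=\nu\circ[-d]$ acts with finite orbits on indecomposables of $M$, so $\nu_d^{-p}B\cong B[-?]$ up to twist for some $p$.
\item Second, $A$ is the endomorphism algebra of a tilting-type object in a fundamental domain. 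Then $\Db(\mod A)$ acquires a Serre functor induced by $\nu$ on $\mathcal U$. The periodicity $\nu_d^{p}\cong[\text{shift}]$ twisted by an automorphism then yields $\nu_A^{\ell}\cong[m]$ twisted.
\end{itemize}
Alternatively, and more economically, I would again realise $\End_B(M)$ as a replicated algebra of $A$. Then Theorem~\ref{stableendomainresult} applies in reverse, since $\End_B(M)$ is a $d$-Auslander algebra. Finiteness of $\gldim A$ follows since $A$ is a quotient $\Gamma/\Gamma e\Gamma$ of the finite global dimension algebra $\Gamma$ by a stratifying idempotent ideal.

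The main obstacle is identifying $\End_B(M)$ with a replicated algebra $A^{(i)}$ for $A=\underline{\End}_B(M)$ in direction (2)$\Rightarrow$(1). This needs the $d$-cluster tilting module $M$ of a $d$-representation-finite algebra to decompose as $\bigoplus_j\tau_d^{-j}B$, with the layers $\tau_d^{-j}B$ arranged so that the $\Hom$ between consecutive layers is $DA$-like. I would first try the description of $M$ via $\tau_d^{-}$-orbits from \cite{I2} and the $\nu_d$-finiteness; if the layered shape fails, I fall back to the direct Serre functor argument above.
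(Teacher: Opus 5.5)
\textbf{The genuine gap is in (2)$\Rightarrow$(1).}

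Your preferred route is to identify $\End_B(M)$ with a replicated algebra $A^{(i)}$ of $A=\underline{\End}_B(M)$ and run Theorem~\ref{mainresultsec3} backwards. This is false in general. Take $B=KQ$ with $Q$ linearly oriented of type $A_4$ and $d=1$, so $M$ is the additive generator of $\mod B$. Then $\End_B(M)$ has $10$ simple modules, $A$ has $6$, and every $A^{(i)}$ has $6(i+1)$. So no layered decomposition of $M$ of the kind you want exists, and Theorem~\ref{mainresultsec3} says nothing about an arbitrary $d$-representation-finite $B$.

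Your fallback does not repair this. Periodicity of $\nu_d$ on the $d$-cluster tilting subcategory $\mathcal U\subset\Db(\mod B)$ is a statement about $\Db(\mod B)$. It is essentially the argument of \cite{HI} that $B$ itself is twisted fractionally Calabi--Yau, whereas you need a statement about $\Db(\mod A)$. In the example above $A$ and $B$ have different numbers of simples, so $A$ is not the endomorphism algebra of a tilting object of $\Db(\mod B)$. Nothing in your sketch explains why the Serre functor of $\Db(\mod A)$ should be induced by $\nu$ on $\mathcal U$. A real bridge from $A$ back to $B$ is required, e.g.\ the triangle equivalence $\Db(\mod A)\simeq\underline{\mod}^{\mathbb Z}\Pi$ of Proposition~\ref{IO result}, together with an argument converting the grading shift on $\underline{\mod}^{\mathbb Z}\Pi$ into a power of the suspension. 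The paper does not reprove this direction; it simply quotes \cite[Theorem 8.7]{CDIM}, and citing it is the economical fix.

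\textbf{For (1)$\Rightarrow$(2)} your strategy is the paper's (Proposition~\ref{mainresultsec3+}), but several identifications are off.
\begin{enumerate}
\item The input is Theorem~\ref{mainresultsec3} or Corollary~\ref{mainresultsec3 cor}. Theorem~\ref{stableendomainresult} is the statement being proved, so citing it is circular.
\item The projective-injective $A^{(i)}$-modules are $[DA,A]_k$ for \emph{all} $1\le k\le i$, not a single copy. Your first guesses in (a) and (b) are therefore swapped. With $\Gamma=A^{(i)}$ and $f=e_1+\cdots+e_i$ one has $\add D(\Gamma f)=\proj\Gamma\cap\inj\Gamma$, $B=f\Gamma f\cong A^{(i-1)}$ and $\Gamma/\Gamma f\Gamma\cong A$.
\item This holds for \emph{every} $i$ such that $A^{(i)}$ is a higher Auslander algebra. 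No special choice of $i$ is needed, except to guarantee $d\ge1$. The paper passes to a multiple of $(m,\ell)$ with $\ell\ge2$ and $\ell+m-2\ge1$, and uses $i=\ell-1$.
\item The bound $\gldim A^{(i-1)}\le\gldim A^{(i)}-1=d$, which you leave to an unspecified inspection, is exactly the inequality in Proposition~\ref{Iwanaga--Gorenstein and replicated}.
\end{enumerate}
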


Theorem \ref{theorem in intro} gives a homological characterization of stable higher Auslander algebras as twisted fractionally Calabi-Yau algebras.
More strongly, it follows from Theorem \ref{stableendomainresult} 
that each twisted fractionally Calabi--Yau algebra can be realized as a stable higher Auslander algebra in infinitely many different ways.

The study of those stable Auslander algebras appears in many places in representation theory, most prominently in the fundamental work by Auslander and Reiten in \cite{AR1,AR2,AR3,AR4,AR5,Re}.
The implication (2)$\Rightarrow$(1) 
was already known by \cite[Theorem 8.7]{CDIM}. 
Thus the contribution of this paper is the converse implication (1)$\Rightarrow$(2).

\subsection{Fractionally Calabi--Yau algebras and minimal Auslander--Gorenstein algebras}
We establish the converse implication (1)$\Rightarrow$(2) by  giving a new characterisation of general twisted fractionally Calabi--Yau algebras, not necessarily of finite global dimension.
This new characterisation is found using the so-called replicated algebras.
For this, we recall some more concepts in the following.
In \cite{IyaSol}, Iyama and Solberg investigated a generalisation of higher Auslander correspondence, where cluster tilting modules are replaced by precluster tilting modules, and higher Auslander algebras are replaced by \emph{minimal Auslander--Gorenstein algebras}.  Such an algebra $A$ is an Iwanaga--Gorenstein algebra that satisfies $\idim A \leq d \leq \domdim A$.
Replicated algebras were investigated in \cite{AI} as a variant of trivial extensions algebras that have positive dominant dimension.
The $m$-th replicated algebra of a finite dimensional algebra $A$ is defined as a $(m+1)\times (m+1)$ matrix algebra:
\[A^{(m)}=\left(\begin{array}{ccccc}
A & 0 & \cdots & 0 & 0  \\
DA & A & \cdots & 0 & 0 \\
\vdots & \ddots & \ddots & \vdots & \vdots \\
0 & 0 & \ddots & A  & 0 \\
0 & 0 & \cdots & DA & A
\end{array}\right).
\]
Here $DA=\Hom_K(A,K)$ is the dual of the regular module, considered as an $A$-bimodule with trivial multiplication.
Replicated algebras are closely related to repetitive algebras \cite{H} and they found  applications in the study of $m$-cluster categories, derived algebraic geometry and related fields see for example \cite{ABST,W,X}. In the most fundamental case when $m=1$, the algebra $A^{(m)}$ is usually simply called the \emph{duplicated} algebra of $A$, we refer to the articles \cite{ABST2,BS} for more information. We call an algebra $B$ a \emph{replicated algebra} when $B$ is isomorphic to $A^{(m)}$ for some $m \geq 1$ for some finite dimensional algebra $A$.

The study of the homological dimensions of replicated algebras and related matrix algebras is a classical topic with several partial results \cite{AI,CIM,PR}. In this article we will give a complete answer when the selfinjective, global and dominant dimensions of replicated algebras coincide and use this to prove our first main result (see Corollary \ref{mainresultsec3 cor} for details).

\begin{theorem} \label{mainresult} {\rm (Corollary \ref{mainresultsec3 cor})}
Let $A$ be a finite-dimensional algebra over a field $k$.
Then the following are equivalent:
\begin{enumerate}[\rm(1)]
    \item $A$ is twisted fractionally Calabi--Yau.
    \item There exists an $i>0$ such that $A^{(i)}$ is a minimal Auslander--Gorenstein algebra. 
    \item There exist infinitely many $i>0$ such that $A^{(i)}$ is a minimal Auslander--Gorenstein algebra.
\end{enumerate}
Furthermore, if we restrict to algebras $A$ with $\gldim A < \infty$ in (1), then we can replace ``minimal Auslander--Gorenstein" by ``higher Auslander algebra" in conditions (2) and (3).
\end{theorem}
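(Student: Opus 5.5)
We will prove the three equivalences by computing the dominant dimension and the selfinjective dimension of $A^{(m)}$ explicitly in terms of derived data of $A$. The projective $A^{(m)}$-modules are the columns $P_j$ ($0\le j\le m$). Each column has the form $(0,\dots,0,A,DA,0,\dots,0)^T$, with $A$ in position $j$ and $DA$ in position $j+1$, except the last, which is just $A$. The injective modules are the rows, of the form $(0,\dots,DA,A,\dots)$. Hence every column except the last is projective-injective: it is the injective hull coming from the next row. Then $A^{(m)}$ is a subalgebra of the repetitive algebra $\hat A$, and $\mod A^{(m)}$ embeds into $\mod \hat A$ as modules supported on $m+1$ consecutive copies of $A$.

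The first step is a formula for the minimal injective coresolution of the non-injective projective $P_m=(0,\dots,0,A)$. Following \cite{AI} and \cite{CIM}, we will show that this coresolution is obtained by iterating the Nakayama functor. The cokernels at each stage are computed by the complexes $\nu^{-k}A$ in $\Db(\mod A)$, placed in copy $m-k$ of $A$. More precisely, we aim for the following: $\domdim A^{(m)}\ge n$ if and only if the derived inverse Nakayama functor $\nu^{-1}$ applied iteratively to $A$ stays concentrated in a shifted module degree. Each application of $\nu^{-1}$ that produces a complex in degrees $[0,a]$ consumes $a+1$ steps of the coresolution, and each time we pass one copy of $A$ to the left. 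The coresolution becomes a genuine finite injective resolution exactly when, at some copy index, the complex $\nu^{-\ell}A$ is isomorphic to a shift $A_\sigma[-m']$ of a twisted regular module. At that point the next term is an injective row and the resolution stops.

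Combining the dominant-dimension and selfinjective-dimension computations, we will establish the key lemma. If $A$ is twisted $(m',\ell)$-fractionally Calabi--Yau, then for every $i$ with $i+1\equiv 0 \pmod{\ell}$ (and $i$ large relative to $\idim A$) we have
\[
\idim A^{(i)}\le d\le\domdim A^{(i)}
\]
with $d$ an explicit affine function of $i$, $m'$ and $\ell$. This gives (1)$\Rightarrow$(3), and (3)$\Rightarrow$(2) is trivial. For (2)$\Rightarrow$(1), the equality $\idim=\domdim$ forces the coresolution of $P_i$ to terminate with an injective that is forced to be a row. Reading the last cokernel back in $\Db(\mod A)$ then gives $\nu^{-\ell}A\cong {}_{\sigma}A[-m']$ as bimodule complexes up to twist. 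Since $\nu$ is $-\Lotimes_A DA$, this is a twisted fractionally Calabi--Yau isomorphism. The Iwanaga--Gorenstein property comes from the finiteness of $\idim A^{(i)}$ restricted to the corners.

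The main obstacle is the bookkeeping in the injective coresolution. Two points need care: showing that terms are projective-injective exactly as long as the iterated $\nu^{-1}A$ remain "module-like", and extracting a bimodule isomorphism (rather than just an isomorphism of one-sided modules) in (2)$\Rightarrow$(1). For the latter we would first try the criterion of \cite{HI}/\cite{CDIM} that a one-sided isomorphism $\nu^{\ell}A\cong A[m']$ in $\Db(\mod A)$ already implies the twisted version, via the isomorphism $\End(A)\cong A$.

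For the final sentence of the statement, finite global dimension of $A$ gives $\gldim A^{(i)}<\infty$, and then $\gldim A^{(i)}=\idim A^{(i)}$. So "minimal Auslander--Gorenstein" becomes "$d$-Auslander algebra".
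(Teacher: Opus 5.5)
\textbf{There is a genuine gap: the mechanism you propose for computing the resolution is wrong, and that is where the proof lives.}

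Your overall architecture matches the paper's. You prove a key lemma for $A^{(\ell k-1)}$, using that twisted $(m,\ell)$-Calabi--Yau implies twisted $(mk,\ell k)$-Calabi--Yau, and you read the converse off a single resolution. The problem is how you compute that resolution. You posit three things:
\begin{itemize}
\item the coresolution of the non-injective projective proceeds copy by copy;
\item at each crossing the cosyzygy is the iterate $\nu^{-k}A$, placed as a module in a single copy;
\item the terms are projective-injective \emph{exactly as long as} these iterates stay module-like.
\end{itemize}

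Taken literally, this already fails for $A=KQ$ with $Q$ linearly oriented of type $A_3$. This $A$ is $\frac{2}{4}$-Calabi--Yau, so $A^{(3)}$ has dominant dimension $5$ by Theorem~\ref{mainresultsec3}. Yet $\nu^{-1}(A)=\RHom_A(DA,A)$ has nonzero cohomology in degrees $0$ and $1$.

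Read summand by summand, with each crossing cosyzygy a stalk module, your picture is precisely the stalk property of Definition~\ref{stalk property}. By Theorem~\ref{Serre-formal and stalk}, this characterises the case where $A$ is in addition Serre-formal, a hypothesis the statement does not make. So your plan would at best reprove the result of \cite{CIM}.

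In fact the intermediate iterates $\nu^{k}(A)$, $0<k<\ell$, play no role at all; only $\nu^{\ell}(A)$ matters. Two related points:
\begin{itemize}
\item Finiteness of the coresolution is just Iwanaga--Gorensteinness (Proposition~\ref{Iwanaga--Gorenstein and replicated}). What the Calabi--Yau condition controls is that the coresolution stops right after the projective-injective terms.
\item For the same reason, ``reading the last cokernel back in $\Db(\mod A)$'' in (2)$\Rightarrow$(1) has no mechanism behind it.
\end{itemize}

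\textbf{The missing idea is the paper's double complex.} Take projective resolutions $f_0\colon Q_0\to DA$ and $f_i\colon Q_i\to\nu(Q_{i-1})$, so that $Q_i\simeq\nu^{i+1}(A)$. Over $A^{(\ell-1)}$, form the total complex of
$[Q_{\ell-1}]_1\to[\nu(Q_{\ell-2}),Q_{\ell-2}]_1\to\cdots\to[\nu(Q_0),Q_0]_{\ell-1}\to[DA]_\ell$.

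Lemma~\ref{total complex} shows this total complex is a projective resolution of $[DA]_\ell$. The argument is by iterated cones: the cone of $[f_i,0]_{\ell-i}\colon[Q_i]_{\ell-i}\to[\nu(Q_{i-1}),Q_{i-1}]_{\ell-i}$ is quasi-isomorphic to $[Q_{i-1}]_{\ell-i+1}$, or to $[DA]_\ell$ when $i=1$. Every term $[\nu(P),P]_k$ is projective-injective, however complicated the $Q_i$ are. The only non-injective terms come from $[Q_{\ell-1}]_1$, which is a model of $\nu^{\ell}(A)$ sitting in the first copy.

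\begin{itemize}
\item \emph{Your key lemma.} If $\nu^\ell(A)\simeq A[m]$, take $Q_{\ell-1}=A[m]$ and split off contractible summands. This gives $\pdim[DA]_\ell=\ell+m-1=\codomdim[DA]_\ell$.
\item \emph{(2)$\Rightarrow$(1).} Set $\ell:=i+1$ and $n:=\idim A^{(i)}$, and choose $Q_{\ell-1}$ with radical differentials. Then every map leaving a term of $[Q_{\ell-1}]_1$ is radical, so all these terms survive in the minimal resolution. Being non-injective, they force $Q_{\ell-1}$ to be the single term $A$ in degree $\ell-1-n$. The twisted bimodule isomorphism then follows as you indicate.
\end{itemize}

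Your dual set-up, using the injective coresolution of the non-injective projective, works equally well with this construction.

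Finally, for the ``furthermore'' part you also need $\gldim A^{(i)}<\infty\Rightarrow\gldim A<\infty$ (\cite[Proposition 1.1]{AI}), not only the converse.
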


This result vastly generalises the main result of \cite{CIM} on Serre-formal algebras, see Theorem \ref{Serre-formal and stalk} for the explicit relationship. There is a strong connection between Theorem \ref{mainresult} and the remarkable results of Sen \cite{Se}.
We remark that there are other characterisations of fractionally Calabi--Yau algebras of finite global dimension, for example, \cite{CDIM} does this via the periodicity of their trivial extension algebras.

Since Theorem \ref{mainresult} says that a single twisted fractionally Calabi--Yau algebra of finite global dimension generates an infinite family of higher Auslander algebras; this provides an extensive -- perhaps the most extensive known so far -- construction of higher Auslander algebras and cluster tilting modules.  
This opens up other practical and theoretical applications.  For example, determining whether an algebra is fractionally Calabi--Yau via computer algebra systems like \cite{QPA} can be used to test various conjectures, which we will do in the future.

We give two other applications of our Theorem \ref{mainresult} in this paper.
The first application gives a description of the derived category and cluster category of twisted fractionally Calabi--Yau algebras using the stable and derived categories of higher preprojective algebras by combining our main result with results from \cite[Section 4]{IO}:

\begin{corollary}{\rm (Corollary \ref{corollaryhigherpreprojective})}
Let $A$ be a twisted $\frac{m}{\ell}$-Calabi--Yau algebra of finite global dimension, where $\ell\ge2$ and $m\ge0$.
Then $A^{(\ell-2)}$ is $n$-representation-finite for $n:=\ell+m-2$. Moreover, let $\Pi:=\Pi_{n+1}(A^{(\ell-2)})$ denote the $(n+1)$-preprojective algebra of $A^{(\ell-2)}$.
Then we have triangle equivalences
\[\underline{\mod}^{\mathbb{Z}} \Pi \cong \Db(\mod A)\ \mbox{ and }\ \underline{\mod}\,\Pi \cong \mathcal{C}_{n+1}(A),\]
where $\mathcal{C}_{n+1}(A)$ denotes the $(n+1)$-cluster category of $A$. 
 \end{corollary}

Notice that a finite dimensional algebra is selfinjective if and only if it is twisted $\frac{0}{1}$-Calabi--Yau if and only if it is twisted $\frac{n}{1}$-Calabi--Yau for some $n\in\Z$. This raises naturally the question of a homological characterisation of twisted $\frac{n}{2}$-Calabi--Yau algebras with $n\in\Z$. As our next application of our Theorem \ref{mainresult}, we give the following equivalent conditions.

\begin{theorem}{\rm (Theorem \ref{onehalfCYcor})}
The following are equivalent for an algebra $A$:
\begin{enumerate}[\rm(1)]
    \item $A$ is twisted $\frac{n}{2}$-Calabi--Yau.
    \item $A^{(1)}$ is a minimal Auslander--Gorenstein algebra of selfinjective dimension $n+1$.
    \item $A$ is Iwanaga--Gorenstein with $\idim A=n$, $\Hom_A(DA,A)=0$ and $A \oplus DA$ is an $n$-precluster tilting object.
\end{enumerate}
\end{theorem}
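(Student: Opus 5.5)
We prove (1)$\Leftrightarrow$(2) as the special case $\ell=2$ of the sharper form of Theorem \ref{mainresult} (Corollary \ref{mainresultsec3 cor}). That result pins down the replication index: $A$ is twisted $\frac{m}{\ell}$-Calabi--Yau if and only if $A^{(\ell-1)}$ is minimal Auslander--Gorenstein of selfinjective dimension $m+\ell-1$. For $\ell=2$ and $m=n$ this says exactly that $A^{(1)}$ is minimal Auslander--Gorenstein with $\idim A^{(1)}=n+1$. If only the cruder statement is available, I will redo the computation directly for the duplicated algebra.

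Write $B:=A^{(1)}=\left(\begin{smallmatrix} A&0\\ DA&A\end{smallmatrix}\right)$. Its indecomposable projectives are $P_1=(A \to DA)$-type modules $(A,DA)$ and $P_2=(0,A)$. The projective-injectives are exactly the summands of $(A,DA)\cong D(A,0)$. Consequently $\domdim B\ge 1$ always holds, with minimal injective copresentation
$$0\to (0,A)\to (DA,DA) \to (DA,0)\to 0.$$
Two identities carry the whole argument:
\begin{itemize}
\item[(a)] $\Ext_B^{i}(-,B)$ reduces to $\Ext_A^{i-1}(DA,A)$ via this copresentation, so $\domdim B\ge n+1$ if and only if $\Ext^i_A(DA,A)=0$ for $0\le i\le n-1$;
\item[(b)] $\idim B=n+1$ if and only if $A$ is Iwanaga--Gorenstein of injective dimension $n$.
\end{itemize}

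For (1)$\Rightarrow$(3), the hypothesis $\nu^2\cong[n]$ up to twist gives the isomorphism
$$\RHom_A(DA,A)\cong \nu^{-1}A\cong \nu A[-n]=DA[-n]$$
up to twist. Hence $\Ext^i_A(DA,A)=0$ for $i\ne n$, which in particular gives $\Hom_A(DA,A)=0$. Precluster tilting of $A\oplus DA$ then follows from two facts:
\begin{itemize}
\item the vanishing of $\Ext^i_A(DA,A)$ and $\Ext^i_A(A,DA)$ in degrees $0<i<n$;
\item stability of $\add(A\oplus DA)$ under $\tau_n=\tau\Omega^{n-1}$ and $\tau_n^-$, where $\tau_n(DA)=D\Ext^n_A(DA,A)\cong A$ up to twist.
\end{itemize}

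For (3)$\Rightarrow$(2), I read off the Ext vanishing from the $n$-rigidity of $A\oplus DA$ and apply (a) and (b).

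For (2)$\Rightarrow$(1), identity (a) gives $\Ext^i_A(DA,A)=0$ for $i<n$, and Gorensteinness of dimension $n$ kills degrees above $n$. So $\RHom_A(DA,A)\cong U[-n]$ with $U=\Ext^n_A(DA,A)$. The remaining point is to show that $U$ is an invertible bimodule isomorphic to ${}_1DA_\sigma$. For this I will use the second half of the dominant-dimension condition: the injective hull of $(DA,0)$-type modules must again be projective-injective. Combining this with the derived equivalence $\nu$, which sends $DA$ to $\nu DA$, we get $\nu^{-1}DA\cong U[-n]$ up to twist. Since $\nu^{-1}$ is an equivalence, $U$ must be a tilting bimodule concentrated in one degree. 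Because $\nu^{-1}(DA)=\nu^{-1}\nu(A)=A$, it follows that $U\cong A$ twisted, giving $\nu^2\cong[n]$ up to twist.

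I expect the main obstacle to be the bookkeeping in (a) and (b), namely computing the injective resolutions of the $B$-modules $(0,A)$ and $(DA,0)$ in terms of $A$-data, including the twist. I would do this by rewriting $B$-modules as triples $(X,Y,f\colon DA\otimes X\to Y)$.
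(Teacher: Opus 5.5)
Citing Theorem~\ref{mainresultsec3} with $\ell=2$ for (1)$\Leftrightarrow$(2) is exactly what the paper does. Your (1)$\Rightarrow$(3) is essentially the forward half of the paper's chain $\nu^2(A)\simeq A[n]\Leftrightarrow\RHom_A(DA,A)\simeq DA[-n]\Leftrightarrow\cdots$. The gap is in (3)$\Rightarrow$(2).

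Your identity (b) is false in both directions. Proposition~\ref{Iwanaga--Gorenstein and replicated} only gives $\idim A^{(1)}\ge\idim A+1$, and this inequality can be strict. For linearly oriented $A_3$ one has $A^{(1)}\cong K\vec{A}_6/\rad^4$, so $\idim A=1$ while $\idim A^{(1)}=3$ (its simple injective module has projective dimension $3$).

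More seriously, the hypotheses you actually use in (3)$\Rightarrow$(2), namely $\idim A=n$ and $\Ext^i_A(DA,A)=0$ for $0\le i\le n-1$, cannot suffice. Take $A=KQ$ with $Q$ of type $D_4$ in subspace orientation. It is hereditary with $\Hom_A(DA,A)=0$, so these conditions hold for $n=1$. But it is $\frac23$- and not twisted $\frac12$-Calabi--Yau, and correspondingly $\tau(DA)$ is neither projective nor injective, so $A\oplus DA$ is not $1$-precluster tilting.

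The ingredient you never use is the $\tau_n^{\pm}$-closure condition in the definition of precluster tilting, and that is where the content lies. The paper argues as follows:
\begin{enumerate}
\item If $\idim A=n$, apply $\Hom_A(DA,-)$ to a minimal injective coresolution $0\to A\to I^0\to\cdots\to I^n\to0$. This gives $\Ext^n_A(DA,A)\cong\Cok\bigl(\Hom_A(DA,I^{n-1})\to\Hom_A(DA,I^{n})\bigr)=\tau_n^-(A)$.
\item For basic $A$, precluster tilting together with $\Hom_A(DA,A)=0$ (hence $\proj A\cap\inj A=0$) forces $\tau_n^-(A)\simeq DA$ by Lemma~\ref{ADA pCT}.
\item Combined with the vanishing in all other degrees, this yields $\nu^{-1}(A)=\RHom_A(DA,A)\simeq DA[-n]=\nu(A)[-n]$, i.e.\ $\nu^2(A)\simeq A[n]$. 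That is (1), and (2) then follows from Theorem~\ref{mainresultsec3}.
\end{enumerate}

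Your fallback direct proof of (2)$\Rightarrow$(1) breaks at the corresponding point. What must be shown is $U:=\Ext^n_A(DA,A)\cong DA$ up to twist. That information sits in the upper bound $\idim A^{(1)}\le n+1$: the minimal projective resolution of the non-projective injective $[DA]_2$ has length $n+1$, and all its terms except the last are projective-injective. Comparing it with the total complex \eqref{key sequence} forces the projective resolution $Q_1$ of $\nu^2(A)$ to be $A$ concentrated in degree $-n$ (Proposition~\ref{AG=>CY}).

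Your sketch goes wrong in three places.
\begin{enumerate}
\item It writes $\nu^{-1}(DA)\cong U[-n]$, conflating $\RHom_A(DA,A)=\nu^{-1}(A)$ with $\nu^{-1}(DA)=A$.
\item It ends with $U\cong A$, which would give $\nu(A)\simeq A[n]$ rather than $\nu^2(A)\simeq A[n]$.
\item The ``injective hull of $(DA,0)$'' is that module itself, which is not projective.
\end{enumerate}
Your displayed copresentation is also wrong. The injective hull of the non-injective projective is built from the injective hull of $A_A$, not from $DA$, and its cokernel is not $(DA,0)$. Statement (a) itself is nevertheless true, e.g.\ by Mueller's theorem, since $A^{(1)}\cong\End_A(A\oplus DA)$ exactly when $\Hom_A(DA,A)=0$.
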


For any given $n$, it is an interesting question to classify algebras satisfying these conditions.
When $A$ is of infinite global dimension, this problem is open even in the case $n=1$; in the finite global dimension case with $n=1$, such a classification was given in \cite[Remark 1.4]{AI}.


In Section 2 we will give the necessary preliminaries and in Section 3 we prove Theorem \ref {mainresult}, which we use in Section 4 to prove the remaining main results of this article.

\medskip\noindent
{\bf Acknowledgements. }
This research is motivated by experiments with the computer algebra package \cite{QPA}. AC is supported by JSPS Grant-in-Aid for Scientific Research (C) 24K06666.  OI is supported by JSPS Grant-in-Aid for Scientific Research (B) 22H01113 and (B) 23K22384.
RM was supported by the DFG with the project number 428999796.

\section{Preliminaries on replicated algebras}

In this section we fix our conventions and review the construction of replicated algebras and the description of their modules.
Algebras are always finite dimensional over a field $K$ and modules are finitely generated right modules, unless otherwise stated. $D=\Hom_K(-,K)$ denotes the $K$-linear duality. We refer for example to \cite{ARS} and \cite{Kr} for an introduction to representation theory and homological algebra of finite dimensional algebras.
We refer for example to \cite{AI} and \cite{CIM} for more information and examples of replicated algebras.
We will follow the exposition in \cite{CIM} in this section very closely.
Let 
$$0 \rightarrow A_A\rightarrow I^0 \rightarrow I^1 \rightarrow \cdots $$
be a minimal injective coresolution of the regular module $A$. Then the \emph{selfinjective dimension} of $A$ is the maximal $n$ such that $I^n \neq 0$; or infinite if $I^n \neq 0$ for all $n\geq 0$.  The \emph{dominant dimension} of $A$ is defined as the smallest $n \geq 0$ such that $I^n$ is not projective or as infinite if no such $n$ exists.

For a finite dimensional algebra $A$, there is an infinite dimensional locally bounded algebra called \emph{repetitive algebra} $\widehat{A}$ defined as follows:
The underlying vector space is given by $(\bigoplus_{i\in\Z}A)\oplus(\bigoplus_{i\in\Z}DA)$, with elements of the form $(a_i,f_i)_{i\in\Z}$ with finitely many non-zero $a_i$ and $f_i$'s.
The multiplication is defined by $(a_i,f_i)_i(b_i,g_i)_i=(a_ib_i,a_{i+1}g_i+f_ib_i)_i$.
Then $\widehat{A}$ can be regarded as a $\Z\times \Z$ matrix (non-unital but locally unital) algebra with diagonal entries $\widehat{A}_{i,i}=A$, sub-diagonal entries $\widehat{A}_{i,i-1}=DA$, and everywhere else zero.

\begin{definition}
Let $A$ be a finite dimensional algebra.
Denote by $e^{(m+1)}$ the idempotent of $\widehat{A}$ given by the $\Z\times\Z$ matrix with $(i,i)$-th entry $1$ for all $i\in\{1,\ldots,m+1\}$ and zero everywhere else.
For $m\geq 0$, the \emph{$m$-replicated algebra} $A^{(m)}$ of $A$ is the idempotent truncation $e^{(m+1)}\widehat{A}e^{(m+1)}$.
In other words, we have
\begin{align}
A^{(m)}:=e^{(m+1)} \widehat{A}e^{(m+1)} =
\left(\begin{array}{cccccc}
A & 0 & 0 & \cdots & 0 & 0 \\
DA & A & 0 &  & & 0 \\
0 & DA & A &  & & 0 \\
\vdots & \ddots & \ddots & \ddots & &\vdots \\
0 & & \ddots & \ddots & \ddots &\vdots \\
0 & 0 & 0 & \cdots & DA & A
\end{array}\right), \notag 
\end{align}
where the matrix is of size $(m+1)\times (m+1)$.

\end{definition}
For many explicit examples of replicated algebras by quiver and relations and a relation to SGC-extension algebras, we refer to \cite{CIM}. We remark that explicit quiver and relations of repetitive algebras and thus also replicated algebras of quiver algebras containing only monomial or commutativity relations can be obtained from the description in \cite{S}.

Any $A^{(m)}$-module can be described by a sequence
\[
(M_0, f_1, M_1, f_2, \ldots, f_m, M_m) ,
\]
where $M_i\in \mod A$ and $f_i\in \Hom_A(M_i\otimes_ADA,M_{i-1})$ satisfying $f_{i-1}\circ (f_i\otimes 1)=0$ for all $i$.
We denote such a module simply by $(M_i,f_i)_i$ whenever $m$ is understood.
Let $(a_i,\phi_i)_i$ be an element of $A^{(m)}$, and $(x_i)_i$ be an element of the module given by the above sequence.
Then the $A^{(m)}$-action is given by $(x_i)_i\cdot (a_i,\phi_i)_i = (x_ia_i+ f_{i+1}(x_{i+1}\otimes \phi_{i+1}))_i$.
Using this description, an $A^{(m)}$-module homomorphism can be described by a sequence $(\theta_i)_i: (M_i,f_i)_i \to (N_i,g_i)_i$ with $\theta_i\in \Hom_A(M_i,N_i)$ satisfying the natural commutation relation (c.f. \cite[III.2]{ARS}).

\begin{observation}\label{eg-modules}
Let $M$ be an $A$-module
\begin{enumerate}[\rm(1)]
\item For $1\le k\le m+1$, $[M]_k$ is the ``stalk module" given by a row vector with $m+1$ entries, where the $k$-th entry being $M$ and the remaining zero:
\[
[M]_k := (\underbrace{\stackrel{1}{0}, \ldots, \stackrel{k-1}{0}, \stackrel{k}{M}, \stackrel{k+1}{0}, \ldots, \stackrel{m+1}{0}}_{m+1\text{ entries}});
\]
the action of $A^{(m)}$ is obvious.
Note that, for every $1\le k\le m+1$, $[?]_k:\mod A\to \mod A^{(m)}$ is a full exact embedding of categories which preserves indecomposable modules, almost split sequences, and irreducible morphisms (c.f. \cite[Lemma 5]{ABST}).

\item If $M$ is projective (resp. injective), for $1\le k\le m$, $[\nu_A(M),M]_k$ (resp. $[M,\nu_A^{-1}(M)]_k$) is the $A^{(m)}$-module given by $M_{k+1}=\nu_A(M)$ (resp. $M$), $f_{k+1}=\mathrm{id}$, $M_{k}=M$ (resp. $\nu_A^-(M)$), i.e.
\begin{align}
[\nu_A(M),M]_k &:= (\underbrace{\stackrel{1}{0}, \ldots, \stackrel{k-1}{0}, \stackrel{k}{\nu_A(M)}, \stackrel{k+1}{M}, \stackrel{k+2}{0},\ldots, \stackrel{m+1}{0}}_{m+1\text{ entries}}) \quad \text{for projective $M$;}\notag \\
[M,\nu_A^-(M)]_k &:= (\underbrace{\stackrel{1}{0}, \ldots, \stackrel{k-1}{0}, \stackrel{k}{M}, \stackrel{k+1}{\nu_A^-(M)}, \stackrel{k+2}{0}, \ldots, \stackrel{m+1}{0}}_{m+1\text{ entries}}) \quad \text{for injective $M$.}\notag 
\end{align}
This also shows that $[\nu_A(M),M]_k$ is projective-injective for an indecomposable projective $A$-module $M$ as it is a direct summand of the $k$-th row of $A^{(m)}$, as well as the dual of a direct summand of the $k$-th column of $A^{(m)}$.

\item The injective envelope $I_M$ and cosyzygy $\Omega_A^-(M)$ form a short exact sequence
\[
0\to M\xrightarrow{\iota} I_M \xrightarrow{\pi} \Omega_A^-(M)\to 0.
\]
For $1\le k\le m$, the injective envelope of $M$ is $[I_M,\nu_A^-(I_M)]_k$.
Moreover, we have a short exact sequence
\begin{align}\label{eq-cosyzygy}
0\to [M]_k\xrightarrow{[\iota,0]_k} [I_M,\nu_A^-(I_M)]_k \xrightarrow{[\pi,\mathrm{id}]_k} \Omega_{A^{(m)}}^-([M]_k)\to 0,
\end{align}
where the cosyzygy $\Omega_{A^{(m)}}^-([M]_k)$ of $[M]_k$ is given by $(M_i,f_i)_i$ with $M_{k+1}=\Omega_A^-(M)$, $f_{k+1}=\pi$, $M_{k}=\nu_A^-(I_M)$, and everything else zero.
In particular, if $M$ is injective, then $\Omega_{A^{(m)}}^-([I]_k)=[\nu_A^-(I)]_{k+1}$.
Note that $[\pi,\mathrm{id}]_k$ is the map given by the sequence $(\theta_i)_{i=0,1,\ldots, m}$ with $\theta_k=\pi, \theta_{k+1}=\mathrm{id}, \theta_i=0$ for all $i\neq k$; similarly for $[\iota,0]_k$.
We leave it to the reader to write down the analogous description of $\Omega_{A^{(m)}}([M]_k)$.
\end{enumerate}
\end{observation}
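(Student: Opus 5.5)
The statement is Observation~\ref{eg-modules}. Every part can be checked directly from the description of $A^{(m)}$-modules as sequences $(M_i,f_i)_i$ with $f_i\colon M_i\otimes_A DA\to M_{i-1}$ and $f_{i-1}\circ(f_i\otimes 1)=0$. I will treat (1), (2), (3) in that order, using the identification $\Hom_A(N\otimes_A DA, L)\cong \Hom_A(N,\Hom_A(DA,L))$ throughout. I also use the standard facts $P\otimes_A DA\cong \nu_A(P)$ for projective $P$ and $\Hom_A(DA,I)\cong \nu_A^-(I)$ for injective $I$.

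\textbf{Part (1).} $[M]_k$ is the sequence with all $f_i=0$. A morphism $[M]_k\to[N]_k$ is exactly a morphism $M\to N$, since there are no commutation conditions. A sequence in $\mod A^{(m)}$ is exact if and only if it is exact componentwise, and $[?]_k$ is exact componentwise. Hence $[?]_k$ is a full exact embedding, and it preserves indecomposability because $\End_{A^{(m)}}([M]_k)\cong\End_A(M)$ is local.

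For almost split sequences and irreducible maps I would cite \cite[Lemma 5]{ABST}. Alternatively, the image of $[?]_k$ is closed under submodules and quotients: any submodule or quotient of a module concentrated in degree $k$ is again concentrated in degree $k$. So right and left almost split properties transfer directly.

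\textbf{Part (2).} For $e$ the $k$-th diagonal idempotent and $P=e_jA$, the row $e\,A^{(m)}$ has nonzero entries $A$ in column $k$ and $DA$ in column $k-1$. In the paper's indexing with $\nu_A(M)$ in slot $k$ and $M$ in slot $k+1$, the relevant summand is $(P\otimes DA, P)=(\nu_A(P),P)$ with structure map $\mathrm{id}$. This is therefore a projective summand of $A^{(m)}$.

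Dually, $D(A^{(m)}e)$ for the column idempotent gives $(I,\Hom_A(DA,I))=(I,\nu_A^-(I))$ with structure map adjoint to the identity. This module is injective. Applying these to $M$ projective with $I=\nu_A(M)$ shows that the module is projective-injective. The range $1\le k\le m$ is forced because the module must fit in $m+1$ slots.

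\textbf{Part (3).} I would first show that $[\iota,0]_k\colon [M]_k\to[I_M,\nu_A^-(I_M)]_k$ is an injective envelope. It is injective as a map, and its target is injective by (2). For essentiality, the socle of the target sits in slot $k$: slot $k+1$ maps isomorphically into slot $k$ under the adjoint structure map, so its elements are not annihilated by the radical. Hence the socle equals $\mathrm{soc}\, I_M=\mathrm{soc}\, M$.

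The cokernel is computed componentwise: $\Omega_A^-(M)$ in slot $k$ and $\nu_A^-(I_M)$ in slot $k+1$, with the structure map induced by $\pi$. (The paper's indexing of which slot receives $\Omega^-$ should be matched to this convention.) If $M=I$ is injective, then $\Omega_A^-(M)=0$ and the cosyzygy reduces to the stalk $[\nu_A^-(I)]_{k+1}$.

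\textbf{Main obstacle.} The only delicate step is the essentiality/socle argument in (3). Together with it, one must keep the slot index conventions of $f_{k+1}$ consistent with the direction of the tensor with $DA$. I would handle this by explicitly writing out the action formula $(x_i)\cdot(a_i,\phi_i)$ on the two-slot module.
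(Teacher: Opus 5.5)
\textbf{The gap is in (1).} It concerns the claim that $[?]_k$ preserves almost split sequences and irreducible morphisms.

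\emph{Why your fallback argument fails.} Take a test map $g\colon Z\to[Y]_k$ from an arbitrary $A^{(m)}$-module $Z$. It factors through the largest quotient $Z/Z(1-e_k)A^{(m)}$ of $Z$ lying in the image of $[?]_k$, where $e_k$ is the $k$-th diagonal idempotent. But the induced map can be a split epimorphism even when $g$ is not. So closure of the image under submodules and quotients does not transfer the almost split property, and neither does closure under extensions, which also holds.

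\emph{The claim is false for non-hereditary $A$.} Take $A=K[x]/(x^2)$ with simple module $S$, and let $m=1$, $k=2$. Write $A^{(1)}$-modules as $(M_1,f_2,M_2)$ and morphisms as $(\theta_1,\theta_2)$. Let $Z=(S,f_2,S)$, where $f_2\colon S\otimes_ADA\cong\nu_A(S)\cong S\to S$ is an isomorphism.
\begin{enumerate}[\rm(i)]
\item The map $g=(0,\mathrm{id}_S)\colon Z\to[S]_2$ is not a split epimorphism. A section would have to be $(0,\mathrm{id}_S)$, and this is a morphism only if $f_2=0$.
\item Every morphism $Z\to[A]_2$ has the form $(0,h_2)$ with $h_2(S)\subseteq\operatorname{soc}A=\rad A$. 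Hence $g$ does not factor through $[\pi]_2\colon[A]_2\to[S]_2$.
\end{enumerate}
So the image $0\to[S]_2\to[A]_2\to[S]_2\to0$ of the almost split sequence of $\mod A$ is not almost split in $\mod A^{(1)}$.

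Irreducibility also fails. The map $[\iota]_2\colon[S]_2\to[A]_2$ factors as $[S]_2\xrightarrow{(0,\iota)}(S,f_2',A)\xrightarrow{(0,\mathrm{id}_A)}[A]_2$, where $f_2'\colon DA\to S$ is the projection onto the top. The middle module is indecomposable (its endomorphism ring is $A$), so the first map is not a split monomorphism. The second map has no section, since a section would force $f_2'=0$.

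\textbf{What can be salvaged.} This part of (1) cannot be proved as stated. Citing \cite[Lemma 5]{ABST}, as both you and the paper do, is justified only in the hereditary setting of that reference. There it is a direct computation in the spirit of your (2):
\begin{enumerate}[\rm(i)]
\item For $Y$ indecomposable non-projective, $\nu_A(Y)=0$ and the minimal presentation $P_1\to P_0$ is injective.
\item Replacing each $[P_i]_k$ by its projective cover from (2) therefore gives a minimal projective presentation of $[Y]_k$.
\item Applying $\nu_{A^{(m)}}$ gives $\tau_{A^{(m)}}[Y]_k\cong[\tau_AY]_k$.
\item Together with $\Ext^1_{A^{(m)}}([Y]_k,[\tau_AY]_k)\cong\Ext^1_A(Y,\tau_AY)$, this yields the claim.
\end{enumerate}
In the example above it is $\nu_A(S)\cong S\neq 0$ that breaks the argument.

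The rest of your proposal is correct and consists of the routine verifications the paper leaves to the reader. This covers the embedding statements in (1) and the projective-injectivity of $[\nu_A(M),M]_k$ in (2). It also covers the injective envelope, the socle argument and the cokernel computation in (3). You are right there that $\Omega_A^-(M)$ sits in slot $k$ and $\nu_A^-(I_M)$ in slot $k+1$.
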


In addition to (2), we remark that
\begin{align}\label{proj inj}
\proj A^{(m)} =& \add\{ [A]_1, [DA,A]_i \mid i=1,\ldots,m \}; \notag \\
\inj A^{(m)} =& \add\{ [DA]_{m+1}, [DA,A]_i \mid i=1,\ldots,m \}; \notag \\
\proj A^{(m)}\cap \inj A^{(m)} =& \add\{ [DA,A]_i \mid i=1,\ldots,m \}.
\end{align}

Recall that the \emph{finitistic projective dimension} $\fpdim A$ of a finite dimensional algebra $A$ is defined as the supremum of all finite projective dimensions of $A$-modules.
Dually, one defines the finitistic injective dimension $\fidim A$ of an algebra $A$. When $A$ is Iwanaga--Gorenstein with $\idim A=g$, then
$g=\fpdim A=\fidim A$, see for example Corollary 3.2.6 and Theorem 3.2.7 in \cite{Che}.
We will need the following result:

\begin{proposition}\label{IwanagaGortriangularmatrixring}
For rings $R$, $S$ and $S$-$R$-bimodule $M$, let $\Lambda= \begin{pmatrix}
R&0 \\
M&S
\end{pmatrix}$. 
\begin{enumerate}[\rm(1)]
\item The following conditions are equivalent:
\begin{enumerate}[\rm(1)]
    \item $R$ and $S$ are Iwanaga--Gorenstein and $\pdim M_R < \infty$ and $\pdim {}_{S}M < \infty$.
    \item $\Lambda$ is Iwanaga--Gorenstein.
\end{enumerate}
\item If $M\neq0$ and $\pdim M_R < \infty$, then 
$$ \pdim M_R +1 \leq \fpdim \Lambda.$$
\end{enumerate}
\end{proposition}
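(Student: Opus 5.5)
Part (1) is a known result on triangular matrix rings (Chen's work on Gorenstein triangular matrix algebras), and I would reprove it by reading off injective dimensions. Part (2) is an explicit construction of a module of finite projective dimension.

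\textbf{Module description.} Write $\Lambda$-modules (right modules) as triples $(X_R, Y_S, f\colon Y\otimes_S M\to X)$. Here $e_1=\mathrm{diag}(1,0)$ and $e_2=\mathrm{diag}(0,1)$. The indecomposable projectives are $e_1\Lambda=(R,0,0)$ and $e_2\Lambda=(M,S,\mathrm{id})$. There are two exact sequences of bimodules:
\[0\to e_1\Lambda\to \Lambda_\Lambda\to (0,S,0)\to 0,\qquad 0\to {}_\Lambda\Lambda e_2\to {}_\Lambda\Lambda\to {}_\Lambda\Lambda e_1/\ldots\]
The second is the analogous left-module sequence. The restriction functors $\Lambda\to R$ and $\Lambda\to S$ are exact. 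One checks $\Ext_\Lambda^i((X,0,0),-)$ and $\Ext^i_\Lambda((0,Y,0),-)$ via change of rings.

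\textbf{Part (1), (1)$\Rightarrow$(2).} I compute the injective dimension of $\Lambda_\Lambda$ from its two summands.
\begin{itemize}
\item The summand $(R,0,0)$ has an injective coresolution obtained from an $R$-injective coresolution of $R_R$, since $(I,0,0)$ is $\Lambda$-injective. So it has finite injective dimension.
\item The summand $(M,S,\mathrm{id})$ fits in $0\to(M,0,0)\to(M,S,\mathrm{id})\to(0,S,0)\to0$.
\item $(M,0,0)$ has finite injective dimension over $\Lambda$ because $M_R$ has finite projective dimension and $R$ is Iwanaga--Gorenstein. Indeed, over an Iwanaga--Gorenstein ring, finite projective dimension implies finite injective dimension.
\item $(0,S,0)$ has injective coresolution $(\Hom_S(M,I^j),I^j)$ coming from one of $S_S$.
\end{itemize}
Hence $\idim\Lambda_\Lambda<\infty$. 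The left side is symmetric, using $\pdim {}_SM<\infty$.

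\textbf{Part (1), (2)$\Rightarrow$(1).} Conversely, $e_1\Lambda e_1=R$ and $\Lambda e_1$ is the projective ${}_\Lambda$-module $\binom{R}{M}$. From a finite injective coresolution of $\Lambda$, applying the exact functors $-\cdot e_1$ and $-\cdot e_2$ gives finite injective dimension of $R_R$ and $S_S$. Here I use that these functors send $\Lambda$-injectives to injectives, since they have exact left adjoints. The same argument on the left gives the left-sided statements. Finite $\pdim M_R$ then follows from the fact that the module $(M,0,0)$ is $\Lambda$-Gorenstein-projective-free. More concretely, $(0,S,0)=\Lambda/e_1\Lambda$ has finite projective dimension because $\Lambda$ is Iwanaga--Gorenstein and it is a direct summand of a module with finite injective dimension. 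Its minimal projective resolution restricted by $e_1$ yields a projective resolution of $M_R$ shifted by one. This last point, extracting finiteness of $\pdim M$ from Gorensteinness of $\Lambda$, is the main obstacle. I would handle it by writing $\Omega_\Lambda(0,S,0)=(M,0,0)$ and using that $\pdim (M,0,0)_\Lambda=\pdim M_R$, since $(-,0,0)$ preserves projectives.

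\textbf{Part (2).} This uses the same identification. Set $p=\pdim M_R$. The module $Z=(0,S,0)$ has $\Omega Z=(M,0,0)$. Since $M\neq 0$, $Z$ is not projective, so
\[\pdim_\Lambda Z=1+\pdim_\Lambda(M,0,0)=1+p<\infty.\]
Therefore $\fpdim\Lambda\ge p+1$.
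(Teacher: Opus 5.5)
Your part (2) is the paper's argument: $Z=(0,S,0)$, $\Omega Z=(M,0,0)$, and $\pdim_\Lambda(M,0,0)=\pdim M_R$. Note that ``$(-,0,0)$ preserves projectives'' only gives $\le$ in the last equality. The reverse inequality needs, e.g., $\Ext^i_\Lambda((X,0,0),(Y,0,0))\cong\Ext^i_R(X,Y)$, or the paper's remark that a minimal resolution stays minimal.

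Part (1), which the paper simply cites from Xiong--Zhang, is where your proposal breaks down: you have misidentified the injective right $\Lambda$-modules. With your conventions, $\Hom_\Lambda(N,(X,0,0))=\Hom_R(\Cok f_N,X)$, which is not exact in $N$. For instance, $\Ext^1_\Lambda((0,S,0),(I,0,0))\cong\Hom_R(M,I)$, so $(I,0,0)$ is not injective. The injectives are $(0,J,0)$ and $\Hom_R(\Lambda e_1,I)=(I,\Hom_R(M,I),\mathrm{ev})$. Hence an $R$-injective coresolution $I^\bullet$ of $R_R$ does not produce one of $(R,0,0)$. Instead, the complex $\Hom_R(\Lambda e_1,I^\bullet)$ fits in a triangle with $(R,0,0)$ and $(0,\RHom_R(M,R),0)$. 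Controlling the injective dimension of $\RHom_R(M,R)$ over $S$ (for finite-dimensional algebras it is $D(M\Lotimes_R DR)$) is exactly where $\pdim{}_SM<\infty$ enters. Your justification for $(M,0,0)$ fails for the same reason.

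Since your argument for $\idim\Lambda_\Lambda<\infty$ never uses $\pdim{}_SM<\infty$, it cannot be right. Take $R=K$, $S=K[x]/(x^2)$ and $M=K$ the simple $S$-module. Then $R$ and $S$ are selfinjective and $\pdim M_R=0$. Yet $e_1\Lambda=(K,0,0)=(M,0,0)$ has injective envelope $(K,K,\mathrm{id})$, with cokernel the simple $(0,K,0)$. Its minimal injective coresolution $0\to(0,K,0)\to(0,S,0)\to(0,S,0)\to\cdots$ never stops, so $\idim\Lambda_\Lambda=\infty$. The right-hand injective dimension genuinely depends on $\pdim{}_SM$ as well, and your ``symmetric'' left-hand argument has the same defect.

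The converse direction has the mirror error. The left adjoint $Y\mapsto(Y\otimes_SM,Y,\mathrm{id})$ of $(-)e_2$ is exact only if ${}_SM$ is flat. Indeed $(-)e_2$ sends the injective $(I,\Hom_R(M,I),\mathrm{ev})$ to $\Hom_R(M,I)$, which need not be injective; in the example above it is the simple $S$-module. On the left it is $e_2\cdot(-)$ that preserves injectives, while $e_1\cdot(-)$ does not. So this step yields only $\idim R_R<\infty$ and $\idim{}_SS<\infty$.

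Your route to $\pdim M_R<\infty$ then needs $\idim_\Lambda(0,S,0)<\infty$, i.e.\ $\idim S_S<\infty$, which is unproved. Moreover, $(0,S,0)$ is not $\Lambda/e_1\Lambda$ (that quotient is the projective $e_2\Lambda$; $(0,S,0)$ is $\Lambda/\Lambda e_1\Lambda$). Nor is it a summand of $\Lambda_\Lambda$, since it is non-projective, as you yourself show in part (2).

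In fact one always has $\idim_\Lambda(0,S,0)=\idim S_S$, and for $M\neq0$, $\pdim_\Lambda(0,S,0)=1+\pdim M_R$. So for Iwanaga--Gorenstein $\Lambda$, $\idim S_S<\infty$ holds if and only if $\pdim M_R<\infty$. Symmetrically, via the left module $\binom{R}{0}$, $\idim{}_RR<\infty$ holds if and only if $\pdim{}_SM<\infty$. These facts only link the missing conditions to one another, so a genuinely additional argument is required for this direction. The paper does not reprove it and relies on the citation.
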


\begin{proof}
(1) See, e.g.\ \cite[Lemma 2.1, 2.3]{XZ}.

(2) The $\Lambda$-module $Y:=[0\ S]$ has first syzygy $\Omega_{\Lambda}^1(Y)=[M\ 0]$ and a minimal projective resolution of $M$ as an $R$-module lifts to a minimal projective resolution as a $\Lambda$-module of $[M\ 0]$. Thus $\pdim_{\Lambda} Y=\pdim_{\Lambda} \Omega_{\Lambda}^1(Y)+1=\pdim_R M +1$. In particular, $\pdim M_R +1 \leq \fpdim \Lambda$.
\end{proof}

\begin{remark} \label{remarkFGR}
There is also an upper bound $\fpdim\Lambda \leq \max(\fpdim R+\pdim {}_{R}M+1,\fpdim S)$, see \cite[Corollary 4.21]{FGR}.
\end{remark}


We will also need the following proposition:

\begin{proposition}\label{Iwanaga--Gorenstein and replicated}
The following are equivalent:
\begin{enumerate}[\rm(1)]
    \item $A$ is Iwanaga--Gorenstein.
    \item $A^{(m)}$ is Iwanaga--Gorenstein for all $m\geq 0$.
    \item $A^{(m)}$ is Iwanaga--Gorenstein for some $m \geq 0$.
\end{enumerate}

If $A$ is Iwanaga--Gorenstein, then
$$\idim A_A\leq\idim A^{(1)}-1 \leq\idim A^{(2)}-2\leq\cdots \leq \idim A^{(m)}-m.$$
\end{proposition}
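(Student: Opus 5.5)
The plan is to proceed by induction on $m$, writing $A^{(m)}$ as a triangular matrix algebra and applying Proposition \ref{IwanagaGortriangularmatrixring}. Consider the block decomposition
\[
A^{(m)}=\begin{pmatrix} A^{(m-1)} & 0\\ M & A\end{pmatrix},
\]
where $R=A^{(m-1)}$ is the upper-left $m\times m$ block and $S=A$ is the last diagonal entry. The bimodule $M$ is the row $(0,\ldots,0,DA)$, viewed as an $A$-$A^{(m-1)}$-bimodule. As a right $R$-module, $M$ is the stalk module $[DA]_m$ over $A^{(m-1)}$, and as a left $A$-module it is $DA$.

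Step 1 is the direction (1)$\Rightarrow$(2). Assume $A$ is Iwanaga--Gorenstein with $\idim A=g$, and inductively that $R=A^{(m-1)}$ is Iwanaga--Gorenstein. Over an Iwanaga--Gorenstein algebra, a module has finite projective dimension if and only if it has finite injective dimension. Hence ${}_A DA$ has $\pdim\le g$, since it is injective and $A$ is Iwanaga--Gorenstein. For $M_R=[DA]_m$ to have finite projective dimension it suffices that it has finite injective dimension over $R$, or we can compute it directly. By the dual of Observation \ref{eg-modules}(3), we have $\Omega_{A^{(m-1)}}([DA]_{m})$ built from $[DA,A]_{m-1}$: the projective cover of $[DA]_m$ is $[DA,A]_{m-1}$, with kernel $[A]_{m-1}$. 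Iterating, a projective resolution of $[A]_{k}$ is obtained from a projective resolution of the $A$-module $DA$, shifted to the preceding position. More simply, I would invoke the criterion that $[X]_k$ has finite projective dimension whenever $X$ has finite projective and finite injective dimension over $A$, proved by induction on $k$ using the syzygy sequences of Observation \ref{eg-modules}. With these finiteness conditions in hand, Proposition \ref{IwanagaGortriangularmatrixring}(1) gives that $A^{(m)}$ is Iwanaga--Gorenstein. Since $A^{(0)}=A$, this starts the induction.

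Step 2 covers (2)$\Rightarrow$(3), which is trivial. Step 3 is the direction (3)$\Rightarrow$(1). If $A^{(m)}$ is Iwanaga--Gorenstein for some $m\ge1$, Proposition \ref{IwanagaGortriangularmatrixring}(1) applied to the decomposition above shows that $S=A$ is Iwanaga--Gorenstein. If $m=0$ there is nothing to prove.

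Step 4 is the inequality. Proposition \ref{IwanagaGortriangularmatrixring}(2) applies since $M\neq0$ and $\pdim M_R<\infty$, giving
\[
\idim A^{(m)}=\fpdim A^{(m)}\ge \pdim_{A^{(m-1)}}[DA]_m+1 .
\]
It therefore suffices to show $\pdim_{A^{(m-1)}}[DA]_m\ge \idim A^{(m-1)}$. Equivalently, one could use the alternative decomposition with $R=A$ in the upper-left corner, $S=A^{(m-1)}$ in the lower-right, and $M$ the column $(DA,0,\ldots,0)^{T}$. In that case $M_R={DA}_A$, whose projective dimension equals $\idim {}_AA=g$ when $A$ is Iwanaga--Gorenstein. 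Proposition \ref{IwanagaGortriangularmatrixring}(2) then only yields $\idim A^{(m)}\ge g+1$.

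The main obstacle is obtaining the chain $\idim A^{(m-1)}+1\le\idim A^{(m)}$. For this I would use the first decomposition and show directly that $\pdim_{A^{(m-1)}}[DA]_m\ge \idim A^{(m-1)}$. My attempt would be to construct, from the last nonzero term of a minimal injective coresolution of $A^{(m-1)}$, an $A^{(m-1)}$-module of projective dimension $\idim A^{(m-1)}$ supported at the bottom position, and to compare it with $[DA]_m$ using the syzygy formulas of Observation \ref{eg-modules}. Failing that, I would compute $\idim A^{(m)}$ explicitly via the cosyzygy sequences (\ref{eq-cosyzygy}) of the stalk modules $[A]_1$ and $[DA,A]_i$ and compare the resulting lengths for consecutive $m$.
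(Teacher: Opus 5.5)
Your Steps 1--3 match the paper's argument: the decomposition \eqref{Am induction} together with Proposition \ref{IwanagaGortriangularmatrixring}(1). In Step 1 you do not need a syzygy computation. $[DA]_m$ is itself an injective $A^{(m-1)}$-module, so it has finite projective dimension because $A^{(m-1)}$ is Iwanaga--Gorenstein by induction. Your direct computation is also wrong as stated. For $m\ge2$ the projective cover of $[DA]_m$ is $[\nu_A(P),P]_{m-1}$, where $P\to DA$ is a projective cover in $\mod A$. Its kernel has $\nu_A(P)$ in position $m-1$ and $\Omega_A(DA)$ in position $m$; it is not $[A]_{m-1}$.

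The genuine gap is in Step 4. You correctly reduce the inequality to $\pdim_{A^{(m-1)}}[DA]_m\ge\idim A^{(m-1)}$, but you leave this open, and the strategies you sketch are not carried out. The missing observation is that $[DA]_m$ is precisely the non-projective part of the injective cogenerator of $A^{(m-1)}$. Dualising the columns of $A^{(m-1)}$ gives $D(A^{(m-1)})\cong[DA]_m\oplus\bigoplus_{j=1}^{m-1}[DA,A]_j$, and every $[DA,A]_j$ is projective by \eqref{proj inj}. Hence
\[\pdim_{A^{(m-1)}}[DA]_m=\pdim D(A^{(m-1)})_{A^{(m-1)}}=\idim{}_{A^{(m-1)}}A^{(m-1)}=\idim A^{(m-1)},\]
where the last equality holds because $A^{(m-1)}$ is Iwanaga--Gorenstein. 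This is exactly the identity the paper uses. It is simply the $A^{(m-1)}$-version of the fact $\pdim(DA)_A=\idim{}_AA$ that you already invoked for your alternative decomposition, which is the case $m=1$. With it, Proposition \ref{IwanagaGortriangularmatrixring}(2) gives $\idim A^{(m)}=\fpdim A^{(m)}\ge\idim A^{(m-1)}+1$ for every $m\ge1$, and the whole chain follows.
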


\begin{proof}
That (2) implies (3) is clear. Now assume (3), then by Proposition \ref{IwanagaGortriangularmatrixring}(1), it follows that $A$ is Iwanaga--Gorenstein using 
\begin{equation}\label{Am induction}
A^{(m)}=\left(
\begin{array}{cc}
A^{(m-1)} & 0 \\
\ [DA]_{m} & A
\end{array}
\right).
\end{equation}
and thus (1) holds. 
Now assume (1), then (2) follows by inductively using the formulation \eqref{Am induction} and applying Proposition \ref{IwanagaGortriangularmatrixring}(1) with $R= A^{(m-1)}$, $S=A$ and $M=[DA]_m$ (since $\pdim_A[DA]_m< \infty $ and $\pdim([DA]_m)_{A^{(m-1)}}< \infty$ hold). 

For the final claim, now assume that $A$ (and thus also $A^{(m)}$ for all $m \geq 0$) is Iwanaga--Gorenstein. Then we have  $\idim A^{(m)} = \fpdim A^{(m)}$.
As in the proof of (1)$\Rightarrow$(2), the formulation \eqref{Am induction} allows us to apply Proposition \ref{IwanagaGortriangularmatrixring}(2).  Since 
$$\pdim [D A]_m = \idim A^{(m-1)}$$
as an $A^{(m-1)}$-module, we get $\idim A^{(m-1)}+1 \leq \idim A^{(m)}$, as desired.
\end{proof}
\begin{remark}
With the inequality from the previous Remark \ref{remarkFGR}, one can also show the upper bound $\idim A^{(m)} \leq (m+1) \idim A+m$ using induction.
\end{remark}



\section{Replicated algebras of fractionally Calabi--Yau algebras}

\subsection{Relation with minimal Auslander--Gorenstein algebras}
In this section we give a proof of Theorem \ref{mainresult}. Note that the twisted $\frac{m}{\ell}$-Calabi--Yau algebras with $\ell=1$ are exactly the selfinjective algebras and thus we restrict our attention to $\ell \geq 2$.
We prove the following more explicit results.

\begin{theorem} \label{mainresultsec3}
Let $A$ be a finite dimensional algebra over a field $k$, and $\ell\ge2$ and $m\ge0$ be integers. Then the following are equivalent:
\begin{enumerate}[\rm(1)]
    \item $A$ is Iwanaga--Gorenstein and twisted $(m,\ell)$-Calabi--Yau.
    \item $A^{(\ell-1)}$ is a minimal Auslander--Gorenstein algebra of selfinjective dimension $\ell+m-1$.
    \end{enumerate}
In particular, the following are equivalent:
\begin{enumerate}[\rm(1)]
    \item $A$ has finite global dimension and is twisted $(m,\ell)$-Calabi--Yau.
    \item $A^{(\ell-1)}$ is a higher Auslander algebra of global dimension $\ell+m-1$.
    \end{enumerate}
\end{theorem}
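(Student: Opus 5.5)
The approach is to compute the minimal injective coresolution of $A^{(\ell-1)}$ explicitly, using the description of modules in Observation~\ref{eg-modules}, and to read off both the dominant and the selfinjective dimension. By \eqref{proj inj}, the projective $A^{(\ell-1)}$-modules are the projective-injectives $[DA,A]_i$ ($1\le i\le \ell-1$) together with the stalk $[A]_1$. So it suffices to coresolve $[A]_1$: we need its minimal injective coresolution to have projective-injective terms up to the correct degree, and then to stop.

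\textbf{Computing the coresolution of $[A]_1$.} Iterating \eqref{eq-cosyzygy}, a cosyzygy of a stalk $[M]_k$ is a two-term module supported in positions $k,k+1$. I will show inductively that the $j$-th cosyzygy of $[A]_1$ is built from the complex $\nu^{-1}$ applied to truncations of an injective coresolution of $A$. It is cleanest to work in $\Db(\mod A)$, or in the repetitive category $\mod\widehat A$ via Happel's equivalence $\underline{\mod}\,\widehat A\simeq \Db(\mod A)$. In that picture, $[X]_k$ in row $k$ corresponds to $\nu^{-(k-1)}$-twisted shifts, and the stable cosyzygy $\Omega^-$ corresponds to $[1]$. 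The truncation $A^{(\ell-1)}=e\widehat Ae$ cuts off to rows $1,\dots,\ell$.

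The key claim is as follows. The cosyzygies $\Omega^{-j}([A]_1)$ stay inside the rows where the injective envelopes are of the form $[I,\nu^{-1}I]_k$, which are projective-injective, exactly as long as the corresponding object $A[j]$ has not reached $\nu^{-(\ell-1)}$-type position. Equivalently, the first non-projective injective term $[DA]_{\ell}$ appears precisely when the derived object becomes isomorphic to a stalk $[DA]_\ell$ corresponding to $\nu^{\ell-1}\cdots$. Unwinding this, $\Omega^{-(\ell+m-1)}([A]_1)$ is injective (indeed a stalk of the form $[DA]_\ell$ up to twist) if and only if $\nu^{-(\ell-1)}$ of $A$ shifted appropriately equals $DA$. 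This translates to $\nu^{\ell}A\cong A[m]$ up to twist, i.e.\ $\nu^\ell\cong[m]$ twisted on $\Kb(\proj A)$. For (1)$\Rightarrow$(2), I then obtain $\idim A^{(\ell-1)}\le \ell+m-1\le\domdim A^{(\ell-1)}$. I will use Proposition~\ref{Iwanaga--Gorenstein and replicated} to guarantee that $A^{(\ell-1)}$ is Iwanaga--Gorenstein, and its inequality $\idim A^{(\ell-1)}\ge \idim A+\ell-1$ to pin down equality.

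\textbf{Converse.} If $A^{(\ell-1)}$ is minimal Auslander--Gorenstein with $\idim=\ell+m-1$, the same computation shows that the $(\ell+m-1)$-st cosyzygy of $[A]_1$ is injective, with all earlier terms projective-injective. Since $[A]_1$ has no projective-injective summand beyond row $\ell$, that cosyzygy must lie in $\add[DA]_\ell$. Translating back gives $\nu^{\ell}A\cong A[m]$ in $\Kb(\proj A)$ as objects. An isomorphism $\nu^{\ell}(A)\cong A[m]$ of objects compatible with the left action gives a bimodule complex isomorphism up to an automorphism twist, which is the twisted CY condition (cf.\ \cite{HI,CDIM}). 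Iwanaga--Gorensteinness of $A$ follows from Proposition~\ref{Iwanaga--Gorenstein and replicated}. The finite global dimension version follows because $\gldim A<\infty$ if and only if $\gldim A^{(\ell-1)}<\infty$ (triangular matrix algebra), and for algebras of finite global dimension Iwanaga--Gorenstein with $\idim=d+1$ means $\gldim=d+1$.

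\textbf{Main obstacle.} The hardest step is the precise bookkeeping: identifying the cosyzygies of $[A]_1$ in $A^{(\ell-1)}$ with derived-category objects $\nu^{-i}A[j]$ and controlling the truncation at row $\ell$. I will first try to do this via Happel's embedding, noting that $\mod A^{(\ell-1)}$ modules supported in rows $1..\ell$ coresolve identically in $\widehat A$ until a summand reaches row $\ell+1$. I will also need to check that the middle terms really are minimal and projective-injective, which requires $\Hom(DA,A)$-type vanishing arising from the CY isomorphism.
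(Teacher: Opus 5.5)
Your plan is dual to the paper's: you coresolve the non-injective projective $[A]_1$, while the paper resolves the non-projective injective $[DA]_\ell$. That choice is fine. The problem is that the step you call ``the key claim'' and then ``unwind'' is the entire content of the theorem, and the proposal contains no mechanism that proves it.

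Iterating \eqref{eq-cosyzygy} does not suffice. $\Omega^{-1}([A]_1)$ is already a two-row module, \eqref{eq-cosyzygy} only describes cosyzygies of stalk modules, and higher cosyzygies generally spread over several rows. What is missing is an explicit, not necessarily minimal, model of the whole coresolution. The paper obtains one as the total complex of the double complex \eqref{key sequence}, built from iterated projective resolutions $Q_i\to\nu(Q_{i-1})$ and shown to be exact by Lemma \ref{total complex}. In your dual setting you would proceed as follows:
\begin{itemize}
\item take injective resolutions $J_0$ of $A$ and $J_i$ of $\nu^-(J_{i-1})$, so that $J_i\simeq\nu^{-i}(A)$;
\item show via the dual lemma that the total complex of $[A]_1\to[J_0,\nu^-J_0]_1\to\cdots\to[J_{\ell-2},\nu^-J_{\ell-2}]_{\ell-1}\to[J_{\ell-1}]_\ell$ is an injective coresolution;
\item observe that the Calabi--Yau condition $J_{\ell-1}\simeq\nu^{-(\ell-1)}(A)\simeq DA[-m]$ is exactly what puts the only non-projective injective term $[DA]_\ell$ in degree $\ell+m-1$, all other terms being projective-injective.
\end{itemize}

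Your proposed substitute does not work as stated. Happel's equivalence $\underline{\mod}\,\widehat A\simeq\Db(\mod A)$ holds only when $\gldim A<\infty$, whereas in the first part of the theorem $A$ is merely Iwanaga--Gorenstein. Even where it is available, knowing $\Omega^{-j}([A]_1)$ stably does not tell you in which rows the injective envelopes of the intermediate cosyzygies lie. That is what decides whether they are projective-injective over $A^{(\ell-1)}$. You would have to prove, for instance, that $\Hom_{\Db(\mod A)}((\nu[1])^{k-1}S,A[j])=0$ for all simple $S$, all $k\ge\ell$ and all $j<\ell+m-1$. Note also that the correct dictionary is $[X]_k\leftrightarrow(\nu[1])^{k-1}X$, not a $\nu^{-(k-1)}$-twist.

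The converse has the same gap. ``Translating back'' from the shape of the minimal coresolution to $\nu^\ell(A)\simeq A[m]$ needs a minimality argument that you do not supply. The explicit total-complex (co)resolution is only the minimal one plus a contractible summand. So you must:
\begin{itemize}
\item choose the last iterated resolution ($Q_{\ell-1}$ in the paper, $J_{\ell-1}$ in your version) with radical differentials;
\item check that every term of it then survives in the minimal (co)resolution;
\item use that these terms are non-injective (resp.\ non-projective) to force them all into the single last degree.
\end{itemize}
Only this shows that $\nu^\ell(A)$ is a stalk complex.

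Finally, the step where you pin down $\idim A^{(\ell-1)}=\ell+m-1$ using $\idim A^{(\ell-1)}\ge\idim A+\ell-1$ fails. It needs $\idim A\ge m$, which is false in general: $KQ$ with $Q$ linearly oriented of type $A_3$ is $\frac{2}{4}$-Calabi--Yau with $\idim KQ=1$. The right argument is that $\idim B<\domdim B$ forces $B$ to be selfinjective, and $A^{(\ell-1)}$ is not selfinjective since $[A]_1$ is not injective.
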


We start with the following basic observation.

\begin{lemma}\label{total complex}
Let $\AA$ be an abelian category. 
Suppose that we have morphisms $a_i:Y_i\to X_i$ in $\C(\AA)$ for $1\le i\le n$ and let $b_i:X_i\to{\rm cone}(a_i)$ be the induced morphisms.
If there are quasi-isomorphisms $c_i:{\rm cone}(a_i)\to Y_{i-1}$ in $\C(\AA)$ for $1\le i\le n$,
then the total complex of
\begin{equation*}\label{n}
Y_n\xrightarrow{a_n} X_n\xrightarrow{a_{n-1}c_nb_n}\cdots\xrightarrow{a_1c_2b_2}X_1\xrightarrow{c_1b_1}Y_0
\end{equation*}
is acyclic.
\end{lemma}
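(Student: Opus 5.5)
The plan is an induction that peels the sequence from the left. Each partial total complex will be identified, up to quasi-isomorphism and shift, with the next $Y$; the final step is then the cone of a quasi-isomorphism, hence acyclic. Throughout I read the statement as saying the displayed sequence is a complex in $\C(\AA)$, i.e.\ consecutive composites vanish; this is implicit in forming its total complex. Write $d_{n}:=a_n$, $d_i:=a_{i-1}c_ib_i$ for $2\le i\le n$, and $d_1:=c_1b_1$ for the maps of the sequence.

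For $1\le k\le n$ let $T_k$ be the total complex of the truncated sequence $Y_n\to X_n\to\cdots\to X_k$. The last term $X_k$ sits in the rightmost position, so $T_k$ is the mapping cone, up to a fixed shift, of a chain map $T_{k+1}\to X_k$, with the convention $T_{n+1}:=Y_n$. Write $\pi_{k+1}:T_{k+1}\to X_{k+1}$ for the projection onto the last column, shifted appropriately, with $\pi_{n+1}$ taken to be the identity of $Y_n$. The map $T_{k+1}\to X_k$ is $d_{k+1}\circ\pi_{k+1}$. Because $d_k d_{k+1}=0$, the composite $d_k\circ\pi_k$ is again a chain map out of $T_k$. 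The same vanishing makes $\varphi_k:=c_kb_k\circ\pi_k:T_k\to Y_{k-1}$ a chain map, up to the shift.

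The inductive claim is that $\varphi_k$ is a quasi-isomorphism for every $k$. For $k=n$ we have $T_n=\mathrm{cone}(a_n)$ up to shift and $\varphi_n=c_n$, which is a quasi-isomorphism by hypothesis. For the inductive step, assume $\varphi_{k+1}:T_{k+1}\to Y_k$ is a quasi-isomorphism. The map $T_{k+1}\to X_k$ defining $T_k$ is
\[
a_kc_{k+1}b_{k+1}\pi_{k+1}=a_k\circ\varphi_{k+1}.
\]
The identity on $X_k$ and $\varphi_{k+1}$ then give a morphism of mapping-cone triangles from $\mathrm{cone}(a_k\varphi_{k+1})=T_k$ to $\mathrm{cone}(a_k)$, realised as an honest chain map $\psi:T_k\to\mathrm{cone}(a_k)$ that is $\varphi_{k+1}[1]$ on one summand and the identity on the other. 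By the five lemma on the long exact cohomology sequences, $\psi$ is a quasi-isomorphism. It remains to check that $c_k\circ\psi=\varphi_k$, or at least that they are homotopic. This should hold because $b_k$ is the inclusion of $X_k$ into the cone and $\pi_k$ picks out the $X_k$-component. Hence $\varphi_k$ is a quasi-isomorphism.

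Finally, the total complex of the whole sequence is, up to shift, $\mathrm{cone}(\varphi_1:T_1\to Y_0)$, and $\varphi_1$ is a quasi-isomorphism, so this cone is acyclic. I expect the main obstacle to be sign and shift bookkeeping rather than any conceptual step. The difficulty is making the total-complex sign convention agree with the mapping-cone convention, so that $T_k$ is literally a cone and $c_k\psi=\varphi_k$ holds on the nose, or up to a homotopy harmless for quasi-isomorphisms. I would fix the convention at the start, placing $Y_0$ in degree $0$ and using the standard cone differential $\begin{pmatrix}-d&0\\ f&d\end{pmatrix}$, and verify this identity carefully.
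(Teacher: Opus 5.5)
Your strategy is the paper's: absorb $Y_n\to X_n$ into $\mathrm{cone}(a_n)$, trade it for $Y_{n-1}$ via $c_n$, and induct. Your maps $\varphi_k\colon T_k\to Y_{k-1}$ only repackage the paper's replacement step without forming the shortened sequences.

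However, the points you defer as sign bookkeeping are where the argument breaks under the stated hypotheses. Write $\mathrm{cone}(a_k)=Y_k[1]\oplus X_k$ degreewise, with inclusion $\iota_k\colon Y_k[1]\to\mathrm{cone}(a_k)$ and projection $\rho$ onto $Y_k[1]$. Then:
\begin{itemize}
\item $\varphi_n=c_nb_n\pi_n$ differs from $c_n$ by $c_n\iota_n\rho$.
\item On the summand $T_{k+1}[1]$ of $T_k$ you get $c_k\psi-\varphi_k=c_k\iota_k\circ\varphi_{k+1}[1]$.
\item For $k\ge2$, $\varphi_k$ is a chain map iff $c_kb_kd_{k+1}=0$, whereas $d_kd_{k+1}=0$ only gives $a_{k-1}c_kb_kd_{k+1}=0$. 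So it is not ``the same vanishing''.
\end{itemize}
None of this follows from the hypotheses, and in fact the lemma as literally stated is false. Take $n=1$ over vector spaces with $X_1=0$, $Y_1=K$, $Y_0=K[1]$ and $a_1=0$. Then $\mathrm{cone}(a_1)=K[1]$, $b_1=0$, and $c_1=\mathrm{id}$ is a quasi-isomorphism. The sequence $K\to0\to K[1]$ has zero maps, so its total complex is a sum of two shifted copies of $K$ and is not acyclic. In your notation, $\varphi_1=0\neq c_1$.

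The missing hypothesis is that each $c_k$ vanishes on the summand $Y_k[1]$. Equivalently, $c_k\iota_k=0$, i.e.\ $c_k=c_kb_k\pi$ for the degreewise projection $\pi\colon\mathrm{cone}(a_k)\to X_k$. Because $c_k$ is a chain map, this forces $c_kb_ka_k=0$.

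The paper's proof makes the same tacit assumption. Its first step, that $T$ coincides with the total complex of $\mathrm{cone}(a_n)\xrightarrow{a_{n-1}c_n}X_{n-1}\to\cdots$, holds only if $a_{n-1}c_n\iota_n=0$. The assumption is satisfied in the application: there $c_i=[0,1]_{\ell-i}$ or $[0,f_0]_{\ell-1}$ is zero in position $\ell-i$, which is exactly where $[Q_i[1]]_{\ell-i}$ sits.

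Once you add this hypothesis, your argument closes on the nose, with no homotopies needed:
\begin{itemize}
\item $\varphi_k$ is a chain map because $c_kb_ka_k=0$.
\item $\varphi_n=c_n$.
\item $c_k\psi=\varphi_k$, since both restrict to $c_kb_k$ on $X_k$ and to $0$ on $T_{k+1}[1]$.
\end{itemize}
Separately, your definition $d_n:=a_n$ should read $d_{n+1}:=a_n$.
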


\begin{proof}
Let $T$ be the desired total complex. Then $T$ coincides with 
the total complex of ${\rm cone}(a_1)\xrightarrow{c_1}Y_0$ if $n=1$, and the total complex of
\[{\rm cone}(a_n)\xrightarrow{a_{n-1}c_n} X_{n-1}\xrightarrow{a_{n-2}c_{n-1}b_{n-1}}\cdots\xrightarrow{a_1c_2b_2}X_1\xrightarrow{c_1b_1}Y_0\]
if $n\ge2$. Since $c_1$ is a quasi-isomorphism, $T$ is acyclic in the former case.
In the latter case, since $c_n:{\rm cone}(a_n)\to Y_{n-1}$ is a quasi-isomorphism, $T$ is quasi-isomorphic to the total complex of
\[Y_{n-1}\xrightarrow{a_{n-1}} X_{n-1}\xrightarrow{a_{n-2}c_{n-1}b_{n-1}}\cdots\xrightarrow{a_1c_2b_2}X_1\xrightarrow{c_1b_1}Y_0.\]
Thus the assertion follows inductively.
\end{proof}

Now we give a general construction.
Let $A$ be a finite dimensional algebra which is Iwanaga--Gorenstein.
Then the isomorphism closures of $\Kb(\proj A)$ and $\Kb(\inj A)$ in $\Db(\mod A)$ coincide, and the Nakayama functor $\nu$ can be regarded as an equivalence
\[\nu:\Kb(\proj A)\simeq\Kb(\proj A).\]
For $i\ge0$, define $Q_i\in\Cb(\proj A)$ by taking projective resolutions
\[f_0:Q_0\to DA\ \text{ and }\ f_i:Q_i\to \nu(Q_{i-1})\ \text{ for }\ i\ge1.\]
Then we have
\begin{equation}\label{Q_i}
 Q_i\simeq\nu^i(DA)=\nu^{i+1}(A)\ \text{ in }\ \Db(\mod A)\ \text{ for }\ i\ge0.
\end{equation}
For each $1\le i\le\ell-1$, we have morphisms 
\[[Q_i]_{\ell-i}\xrightarrow{a_i:=[f_i,0]_{\ell-i}}[\nu(Q_{i-1}),Q_{i-1}]_{\ell-i}\xrightarrow{b_i}{\rm cone}(a_i)=[{\rm cone}(f_i),Q_{i-1}]_{\ell-i}\]
and a quasi-isomorphism
\[c_i:=\left\{\begin{array}{rll}[0,1]_{\ell-i}:&{\rm cone}(a_i)=[{\rm cone}(f_i),Q_{i-1}]_{\ell-i}\to[Q_{i-1}]_{\ell-i+1}&\text{if }i\ge2\\
\phantom{.} [0,f_0]_{\ell-1}:&{\rm cone}(a_1)=[{\rm cone}(f_1),Q_0]_{\ell-1}\to[DA]_{\ell}&\text{if }i=1\end{array}\right.\]
in $\Cb(\proj A^{(\ell-1)})$.
It follows from Lemma \ref{total complex} that the total complex of
\begin{equation}\label{key sequence}
[Q_{\ell-1}]_1\xrightarrow{(f_{\ell-1})_1}[\nu(Q_{\ell-2}),Q_{\ell-2}]_1\xrightarrow{(f_{\ell-2})_2}\cdots\xrightarrow{(f_2)_{\ell-2}}[\nu(Q_1),Q_1]_{\ell-2}\xrightarrow{(f_1)_{\ell-1}}[\nu(Q_0),Q_0]_{\ell-1}\xrightarrow{(f_0)_\ell}[DA]_\ell.
\end{equation}
is exact.
We denote the total complex of \eqref{key sequence} by 
\begin{equation}\label{key sequence 2}
   \cdots\to X^{-2}\to X^{-1}\to [DA]_\ell\to0.
\end{equation}
with $X^{-i}\in \mod A$ for $i\ge 1$.
Since each term of the double complex \eqref{key sequence} is projective as an $A^{(\ell-1)}$-module except for $[DA]_\ell$, the sequence \eqref{key sequence 2} is a projective resolution of the $A^{(\ell-1)}$-module $[DA]_\ell$. 

We can now show the implication (1) $\Rightarrow$ (2) of Theorem \ref{mainresultsec3}.

\begin{proposition}\label{CY=>AG}
Let $A$ be a finite dimensional algebra, $\ell\ge2$ and $m\ge0$. If $A$ is Iwanaga--Gorenstein and twisted $(m,\ell)$-Calabi--Yau, then $A^{(\ell-1)}$ is minimal Auslander--Gorenstein algebra with selfinjective dimension $\ell+m-1$.
\end{proposition}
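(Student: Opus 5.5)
We prove that $A^{(\ell-1)}$ is Iwanaga--Gorenstein with $\idim A^{(\ell-1)}\le \ell+m-1$ and $\domdim A^{(\ell-1)}\ge \ell+m-1$. Since a non-selfinjective algebra has $\domdim\le\idim$, this gives a minimal Auslander--Gorenstein algebra with selfinjective dimension exactly $\ell+m-1$. Non-selfinjectivity holds because $[A]_1$ is projective but not injective by \eqref{proj inj}.

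Iwanaga--Gorensteinness comes for free from Proposition \ref{Iwanaga--Gorenstein and replicated}. For the bound on $\idim$ it suffices, by the dual statements, to bound $\pdim$ of the injective $A^{(\ell-1)}$-modules. By \eqref{proj inj} the only non-projective indecomposable injectives are the summands of $[DA]_\ell$. The resolution \eqref{key sequence 2} of $[DA]_\ell$ is the total complex of \eqref{key sequence}, which has $\ell$ columns. Columns $1,\ldots,\ell-1$ are the projective complexes $[\nu(Q_{i-1}),Q_{i-1}]_{\ell-i}$, and the leftmost column is $[Q_{\ell-1}]_1$.

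Here the Calabi--Yau hypothesis enters. By \eqref{Q_i}, $Q_{\ell-1}\simeq\nu^{\ell}(A)\simeq A[m]$ up to a twist by an automorphism $\phi$. So we may choose $Q_{\ell-1}$ to be the stalk complex ${}_\phi A_1$ concentrated in degree $-m$. For $i<\ell-1$ we choose $Q_i$ as minimal complexes. The total complex then has length $(\ell-1)+m$: column $j$ sits in horizontal degree $-(\ell-j)$, with the final $[DA]_\ell$ in degree $0$. I expect the main obstacle to be controlling the vertical amplitude of the intermediate $Q_i\simeq\nu^{i+1}A$ so that no term lands beyond degree $-(\ell+m-1)$. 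Possibly one needs $Q_i$ to be concentrated in degrees $[-m,0]$, obtained from the truncation/amplitude properties of the Serre functor. The paper presumably replaces $Q_i$ by a homotopy-minimal model, truncates using that $\nu^{i+1}A$ has cohomology only in degrees $\ge -m$, and obtains projective dimension $\ell+m-1$.

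For dominant dimension we instead dualize. Applying $D$ to the analogous construction for $A^{\rm op}$, whose twisted CY property is equivalent, gives an injective coresolution of $[A]_1$:
\[0\to[A]_1\to [A,\nu^{-1}A]_1\to\cdots\to [\nu^{-\ell+1}A\text{-terms}]_{\ell-1}\to [{}_\phi DA]_\ell[-m]\text{-part}.\]
Its first $\ell+m-1$ terms come from the columns $[\nu^{-1}(P),P]$, which are projective-injective by Observation \ref{eg-modules}(2). The only non-projective injective terms come from the last column $[\nu^{-\ell+1}\cdots]_\ell\cong[DA]_\ell$, occurring in degree $\ell+m-1$. Hence $\domdim [A]_1\ge \ell+m-1$.

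The remaining indecomposable projectives $[DA,A]_i$ are projective-injective, so $\domdim A^{(\ell-1)}\ge\ell+m-1$. The exact degree bookkeeping in this dual resolution is the same obstacle as above, and I would handle it by the same choice of minimal complexes $Q_i$.
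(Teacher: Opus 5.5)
There is a genuine gap at exactly the step you flag: you never prove $\pdim_{A^{(\ell-1)}}[DA]_\ell\le\ell+m-1$, and the fix you suggest is not justified as stated. Knowing that $\nu^{i+1}(A)$ has cohomology only in degrees $[-m,0]$ does not let you choose $Q_i$ concentrated in $[-m,0]$. Truncating a complex of projectives at degree $-m$ leaves the cokernel of $Q_i^{-m-1}\to Q_i^{-m}$ in degree $-m$, and this is in general not projective. So cohomological amplitude does not bound the amplitude of a projective model. Without such a bound, a term of $Q_{\ell-2}$ in degree $\le -m-1$ lands in total degree $\le-\ell-m$. Your count ``the total complex has length $(\ell-1)+m$'' is therefore unsupported.

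Your route can be repaired, but the repair is missing from the proposal. Since $\nu^{i+1}(A)\simeq\nu^{-(\ell-1-i)}(A)[m]$, it suffices to control $\nu^{-1}=\RHom_A(DA,-)$. This functor carries a bounded projective complex in degrees $\ge0$ to one in degrees $\ge0$: replace it by an injective model in degrees $\ge0$ and apply $\Hom_A(DA,-)$ termwise. Hence $\nu^{i+1}(A)$ has a projective model in degrees $\ge -m$, and splitting off the top using $H^{>0}=0$ gives $Q_i$ in $[-m,0]$.

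The paper avoids amplitude control altogether. In \eqref{key sequence}, every term except the single summand $[A]_1$ coming from $[Q_{\ell-1}]_1=[A[m]]_1$, in total degree $-\ell-m$, is projective-injective. Work from the left end of the bounded exact complex \eqref{key sequence 2}. The first differential is a monomorphism out of an injective module, so it splits, and its cokernel is again injective; repeat. Everything to the left of $[A]_1$ splits off as a contractible summand, together with the injective complement $Y$ of $[A]_1$ in $X^{-\ell-m}$. What remains is
\[0\to[A]_1\to Z\to X^{2-\ell-m}\to\cdots\to X^{-1}\to[DA]_\ell\to0\]
with all middle terms projective-injective. This gives $\pdim[DA]_\ell=\ell+m-1$ exactly, since $[A]_1$ is not injective and so $[A]_1\to Z$ cannot split. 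The same resolution simultaneously gives $\codomdim[DA]_\ell\ge\ell+m-1$, which yields the dominant dimension bound, so your separate dual construction over $A^{\rm op}$ is unnecessary.

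Incidentally, your dual argument for the dominant dimension is fine as it stands and does not suffer from the obstacle you attribute to it. The only non-projective injective term sits in degree $\ell+m-1$ of the coresolution of $[A]_1$. So the first $\ell+m-1$ terms are projective-injective whatever the amplitudes of the intermediate complexes.
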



\begin{proof}
Recall that all indecomposable injective $A^{(\ell-1)}$-modules are projective except for the direct summands of $[DA]_\ell$, so it suffices to show that $\pdim [DA]_\ell=\ell+m-1=\codomdim[DA]_\ell$.

Since $A$ is twisted $(m,\ell)$-Calabi--Yau, we have isomorphisms in $\Db(\mod A)$:
\[Q_{\ell-1}\stackrel{\eqref{Q_i}}{\simeq}\nu^\ell(A)\simeq A[m].\]
Thus the leftmost term of \eqref{key sequence} can be taken as $[Q_{\ell-1}]_1=[A[m]]_1$. Then the term $X^{-\ell-m}$ in \eqref{key sequence 2} has a direct summand $[A]_1$. Take a decomposition $X^{-\ell-m}=[A]_1\oplus Y$ as an $A^{(\ell-1)}$-module.
Since each column (the $Q_i$'s and $\nu Q_i$'s) of the double complex \eqref{key sequence} is bounded, the total complex \eqref{key sequence 2} is also bounded, say, by $-N\ll 0$. 
That is, the exact sequence \eqref{key sequence 2} is of the form
\[0\to X^{-N}\xrightarrow{d^{-N}} X^{1-N}\xrightarrow{d^{1-N}}\cdots\to X^{-1-\ell-m}\to [A]_1\oplus Y\to X^{1-\ell-m}\to\cdots\to X^{-2}\to X^{-1}\to[DA]_\ell\to0.\]
Since each term of the double complex \eqref{key sequence} is injective as an $A^{(\ell-1)}$-module except for $[A[m]]_1$, so does $Y$ and each $X^i$ with $i\neq-\ell-m$. 
This implies that $d^{-N}:X^{-N}\to X^{1-N}$ is a split monomorphism, and that the morphism $\Cok d^{-N}\to X^{2-N}$ induced by $d^{1-N}$ is a split monomorphism, and so on. Repeating the same argument, in the homotopy category $\Kb(\proj A^{(\ell-1)})$, the sequence \eqref{key sequence 2} is isomorphic to a sequence of the form
\[0\to[A]_1\to Z\to X^{2-\ell-m}\to\cdots\to X^{-2}\to X^{-1}\to[DA]_\ell\to0,\]
where $Z$ is a direct summand of $X^{1-\ell-m}$.
This sequence shows the claim $\pdim [DA]_\ell=\ell+m-1=\codomdim[DA]_\ell$.
\end{proof}

Now we prove the implication (2) $\Rightarrow$ (1) of Theorem \ref{mainresultsec3}.

\begin{proposition}\label{AG=>CY}
Let $A$ be a finite dimensional algebra and $\ell\ge2$ and $n\ge0$. If $A^{(\ell-1)}$ is minimal Auslander--Gorenstein of selfinjective dimension $n$. Then $A$ is Iwanaga--Gorenstein and twisted $(n-\ell+1,\ell)$-Calabi--Yau.
\end{proposition}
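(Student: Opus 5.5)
First, Iwanaga--Gorensteinness of $A$ comes for free: $A^{(\ell-1)}$ is Iwanaga--Gorenstein by assumption, so $A$ is Iwanaga--Gorenstein by Proposition \ref{Iwanaga--Gorenstein and replicated}. In particular the complexes $Q_i$ and the exact total complex \eqref{key sequence 2} from the general construction are available. This gives a projective resolution of the $A^{(\ell-1)}$-module $[DA]_\ell$. The plan is to run the proof of Proposition \ref{CY=>AG} backwards, using the assumption $\pdim[DA]_\ell = n = \codomdim[DA]_\ell$ to recover $\nu^\ell(A)\simeq A[n-\ell+1]$ up to twist.

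Since $A^{(\ell-1)}$ is minimal Auslander--Gorenstein of selfinjective dimension $n$, the only non-projective indecomposable injectives are the summands of $[DA]_\ell$. Hence the minimal projective resolution
\[0\to P^{-n-1}\to P^{-n}\to\cdots\to P^{-1}\to[DA]_\ell\to 0\]
has $P^{-1},\dots,P^{-n}$ projective-injective, i.e.\ in $\add\{[DA,A]_i\}$. The last term $P^{-n-1}$ is projective but, by minimality, has no projective-injective summands, so $P^{-n-1}\in\add[A]_1$. Write $P^{-n-1}=[U]_1$ with $U\in\proj A$.

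Next I compare this resolution with \eqref{key sequence 2}. The total complex \eqref{key sequence 2} is a (bounded) projective resolution of $[DA]_\ell$, so in $\Kb(\proj A^{(\ell-1)})$ it is homotopy equivalent to the minimal one. Now restrict to the first row by applying $-\otimes e_1$, i.e.\ the exact functor $\Hom_{A^{(\ell-1)}}(e_1A^{(\ell-1)},-)$ picking out the first component of each module.

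\begin{itemize}
\item On the minimal resolution, the first component of $[DA,A]_1$ is $DA$, while the other projective-injectives vanish there.
\item On \eqref{key sequence}, the first components come from $[Q_{\ell-1}]_1$ and from $\nu(Q_{\ell-2})$ in $[\nu(Q_{\ell-2}),Q_{\ell-2}]_1$.
\end{itemize}

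Rather than tracking components, the cleaner approach is to use the triangles. Since each $c_i$ is a quasi-isomorphism, iterating the cones shows that \eqref{key sequence 2} is built from $[Q_{\ell-1}]_1$ together with objects in $\Kb(\proj\cap\inj)$. Equivalently, in the Verdier quotient $\Kb(\proj A^{(\ell-1)})/\Kb(\add\{[DA,A]_i\})$, the object $[DA]_\ell$ is isomorphic to $[Q_{\ell-1}]_1[\ell-1]$, up to the shifts coming from the total complex. The minimal resolution gives the same object as $[U]_1[n]$.

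The quotient functor restricted to $[?]_1:\Kb(\proj A)\to \Kb(\proj A^{(\ell-1)})$ should be fully faithful. I plan to check this via $\Hom([P]_1,[DA,A]_i)=0$ for $i\ge2$ and a direct analysis for $i=1$; alternatively I would recognise the quotient as $\Db(\mod A)$ via the recollement given by the idempotent $e=\sum_{i\ge1}$ (projective-injective part). Granting this, we get $Q_{\ell-1}\simeq U[n-\ell+1]$ in $\Kb(\proj A)$, so $\nu^\ell(A)\simeq U[m']$ with $m'=n-\ell+1$.

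Finally, $\nu^\ell$ is an autoequivalence sending $A$ to a shifted projective generator $U[m']$. Then $U$ is a progenerator with $\End(U)\cong A$, so $U\cong A_\sigma$ for an automorphism $\sigma$, by standard Morita/Rickard arguments. This gives $\nu^\ell\cong(-)_\sigma[m']$ as functors, since a standard derived autoequivalence is determined by its two-sided tilting complex, which is concentrated in one degree.

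The main obstacle is the identification step: making rigorous that the full-faithfulness (or the recollement) holds, and keeping careful track of the shift so that it comes out exactly as $n-\ell+1$.
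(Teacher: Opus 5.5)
Your argument is correct, and it compares the two resolutions of $[DA]_\ell$ differently from the paper.

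\textbf{How the paper does it.} The paper works at the level of complexes. It chooses $Q_{\ell-1}$ with radical differentials and splits the total complex as the minimal resolution plus a contractible complex. Every component of the differential leaving a term of $[Q_{\ell-1}]_1$ is radical: vertically by minimality of $Q_{\ell-1}$, horizontally because it lands in a projective-injective. Hence every term of $[Q_{\ell-1}]_1$ survives in the minimal resolution. Since that resolution has only one non-projective-injective term, the minimal $Q_{\ell-1}$ is literally a stalk complex.

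\textbf{How you do it.} You kill the projective-injectives, after which both resolutions collapse to their row-one part. Your shifts, $[Q_{\ell-1}]_1[\ell-1]$ versus $[U]_1[n]$, are right. Your route avoids the minimality and radical bookkeeping. The paper's route is more elementary and gives the slightly more concrete statement about the terms of the minimal $Q_{\ell-1}$.

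\textbf{The step you flagged.} Write $B:=A^{(\ell-1)}$. Your first plan tests orthogonality from the wrong side: $\Hom_B([P]_1,[DA,A]_1)=\Hom_A(P,DA)\neq 0$, so no direct analysis at $i=1$ will make $[P]_1$ left orthogonal to the projective-injectives. The useful vanishing goes the other way. Put $e':=e_2+\cdots+e_\ell$ and $\mathcal S:=\Kb(\add e'B)$. Then $\Hom(e'B,[Y]_1[k])=H^k([Y]_1e')=0$ for all $k$, so $[Y]_1\in\mathcal S^{\perp}$. Consequently $\Hom(X,[Y]_1)$ is unchanged on passing to the Verdier quotient, for every $X$. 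Full faithfulness then follows from that of $[?]_1\colon\Kb(\proj A)\to\Kb(\proj B)$.

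\textbf{A shortcut.} You can skip the Verdier quotient entirely. We have $B/Be'B\cong A$, and the additive functor $-\otimes_B B/Be'B$, applied termwise, kills $\add e'B$ and sends $[P]_1$ to $P$. Applying it to the two homotopy equivalent resolutions gives $Q_{\ell-1}[\ell-1]\simeq U[n]$ in $\Kb(\proj A)$ at once. This is the functor $i^*$ of your recollement; its target is $\Kb(\proj A)$, not $\Db(\mod A)$.

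\textbf{The twist at the end.} A progenerator $U$ with $\End_A(U)\cong A$ need not be isomorphic to $A_A$ when $A$ is not basic. For basic $A$ it is, since $\nu^\ell$ permutes the indecomposable projectives. Your bimodule argument via the cohomology of $(DA)^{\Lotimes\ell}$ then produces the automorphism. The paper makes the same tacit assumption when it writes the last term of the minimal resolution as $[A]_1$.
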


\begin{proof}
Since $A^{(\ell-1)}$ is minimal Auslander--Gorenstein, $A$ is Iwanaga--Gorenstein by Proposition \ref{Iwanaga--Gorenstein and replicated},
and the $A^{(\ell-1)}$-module $[DA]_\ell$ has a minimal projective resolution of the form
\begin{equation}\label{key sequence 3}
0\to [A]_1\to Y^{-n}\to\cdots\to Y^{-2}\to Y^{-1}\to [DA]_\ell\to0
\end{equation}
for some $n$, where each $Y^i$ is projective-injective as an $A^{(\ell-1)}$-module.
Moreover, in the category $\Cb(\proj A^{(\ell-1)})$, the complex \eqref{key sequence 2} is a direct sum of the complex \eqref{key sequence 3} and some contractible complex.

Without loss of generality, we can assume that each differential of $Q_{\ell-1}$ is in the radical of $\mod A$. Then, in the double complex \eqref{key sequence}, both horizontal and vertical morphisms starting at each term of $[Q_{\ell-1}]_1$ is in the radical of $\mod A^{(\ell-1)}$.
Therefore each term of the complex $[Q_{\ell-1}]_1$ still appear in the minimal projective resolution \eqref{key sequence 3}.

Since each indecomposable direct summand of each term of $[Q_{\ell-1}]_1$ is non-injective, 
the complex $Q_{\ell-1}$ has only one non-zero term corresponding to the left-most term $[A]_1$ of \eqref{key sequence 3}, that is, $Q_{\ell-1}\simeq A[n-\ell+1]$ in $\Cb(\mod A)$. Now we have
\[\nu^{\ell}(A)\stackrel{\eqref{Q_i}}{\simeq} Q_{\ell-1}\simeq A[n-\ell+1]\]
and hence $A$ is twisted $(n-\ell+1,\ell)$-Calabi--Yau.
\end{proof}

\begin{proof}[Proof of Theorem \ref{mainresultsec3}]
The first part follows from Propositions \ref{CY=>AG} and \ref{AG=>CY}.

Now the last part about the finite global dimension follows from \cite[Proposition 1.1]{AI}, where it was shown that an algebra $A$ has finite global dimension if and only if $A^{(m)}$ has finite global dimension for some (also, for all) $m>0$.
\end{proof}


Immediately, we obtain the following observation.

\begin{corollary} \label{mainresultsec3 cor}
Let $A$ be a finite-dimensional algebra over a field $k$.
Then the following are equivalent:
\begin{enumerate}[\rm(1)]
    \item $A$ is Iwanaga--Gorenstein and twisted fractionally Calabi--Yau.
    \item There exists an $i>0$ such that $A^{(i)}$ is a minimal Auslander--Gorenstein algebra. 
    \item There exists $\ell\ge2$ such that for each $i\ge1$, $A^{(\ell i-1)}$ is a minimal Auslander--Gorenstein algebra.
\end{enumerate}
Furthermore, if we restrict to algebras $A$ with $\gldim A < \infty$ in (1), then we can replace ``minimal Auslander--Gorenstein" by ``higher Auslander algebra" in conditions (2) and (3).
\end{corollary}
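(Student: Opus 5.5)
The plan is to derive Corollary \ref{mainresultsec3 cor} from Theorem \ref{mainresultsec3}. The key extra input is that twisted fractionally Calabi--Yau dimensions can be scaled: if $\nu^\ell\simeq[m]$ up to a twist by an automorphism, then $\nu^{\ell i}\simeq[mi]$ up to a twist, for every $i\ge1$. The case $\ell=1$, i.e.\ selfinjective $A$, has to be treated separately, since Theorem \ref{mainresultsec3} requires $\ell\ge2$.

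For (1)$\Rightarrow$(3), suppose $A$ is Iwanaga--Gorenstein and twisted $(m,\ell)$-Calabi--Yau. If $\ell=1$, then $A$ is also twisted $(2m,2)$-Calabi--Yau, so we may assume $\ell\ge2$. One might worry that $m$ could be negative, but Theorem \ref{mainresultsec3} needs $m\ge0$. For Iwanaga--Gorenstein $A$, $\nu^\ell(A)=\nu^{\ell-1}(DA)$ has cohomology only in degrees $\le 0$, since $\nu$ is right exact in the appropriate sense. Hence $A[m]$ forces $m\ge0$; alternatively, this can be read off from the resolution $Q_{\ell-1}$ in \eqref{key sequence}.

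Now fix $\ell\ge2$ as above. For each $i\ge1$, compose the twisted isomorphism $i$ times. The point to check is that a twist by an automorphism $\sigma$ commutes with $\nu$ up to a twist, because $\nu$ commutes with autoequivalences induced by algebra automorphisms. This shows $A$ is twisted $(mi,\ell i)$-Calabi--Yau with $\ell i\ge2$. Proposition \ref{CY=>AG} then gives that $A^{(\ell i-1)}$ is minimal Auslander--Gorenstein, which is (3).

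The implication (3)$\Rightarrow$(2) is trivial: take $i=1$, noting $\ell-1\ge1>0$.

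For (2)$\Rightarrow$(1), let $A^{(i)}$ be minimal Auslander--Gorenstein with $i>0$, and put $\ell:=i+1\ge2$. Write $n$ for its selfinjective dimension. By Proposition \ref{AG=>CY}, $A$ is Iwanaga--Gorenstein and twisted $(n-\ell+1,\ell)$-Calabi--Yau, which gives (1). Note that in Proposition \ref{AG=>CY} the integer $n-\ell+1$ is automatically $\ge0$ by Proposition \ref{Iwanaga--Gorenstein and replicated}, although this is not even needed here.

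For the finite global dimension refinement, I would use \cite[Proposition 1.1]{AI}: $\gldim A<\infty$ if and only if $\gldim A^{(j)}<\infty$ for some, equivalently all, $j$. For algebras of finite global dimension, a minimal Auslander--Gorenstein algebra is exactly a higher Auslander algebra, since then $\gldim=\idim$. The same chain of implications therefore goes through verbatim, as in the second part of Theorem \ref{mainresultsec3}.

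The main obstacle is the bookkeeping for twists when passing from $(m,\ell)$ to $(mi,\ell i)$, and the $\ell=1$ case. The rest is direct citation.
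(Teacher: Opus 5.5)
Your proposal is correct and follows the paper's proof. As in the paper, (3)$\Rightarrow$(2) is trivial, (2)$\Rightarrow$(1) applies Theorem \ref{mainresultsec3} (equivalently Proposition \ref{AG=>CY}) with $\ell=i+1$, and (1)$\Rightarrow$(3) rescales $(m,\ell)$ to $(mi,\ell i)$ and applies Proposition \ref{CY=>AG}; your extra remarks (reducing $\ell=1$ to $\ell=2$, and checking $m\ge0$ via right t-exactness of $\nu=-\Lotimes_A DA$) usefully make explicit hypotheses of Theorem \ref{mainresultsec3} that the paper's short proof leaves implicit.
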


\begin{proof}
(3)$\Rightarrow$(2) This is clear.

(2)$\Rightarrow$(1) By Proposition \ref{Iwanaga--Gorenstein and replicated}, we have $\idim A^{(i)}\ge i$. Let $\ell:=i+1\ge2$ and $m:=\idim A^{(i)}-i\ge0$. By Theorem \ref{mainresultsec3}(2)$\Rightarrow$(1), $A$ is Iwanaga--Gorenstein and twisted $(m,\ell)$-Calabi--Yau.

(1)$\Rightarrow$(3) This follows from Theorem \ref{mainresultsec3}(1)$\Rightarrow$(2) and the fact that twisted $(m,\ell)$-Calabi--Yau algebras are also twisted $(mi,\ell i)$-Calabi--Yau for each $i\ge1$.
\end{proof}

We can formulate this result as a bijective correspondence between twisted fractionally Calabi--Yau algebras and replicated minimal Auslander--Gorenstein algebras as follows:
\begin{corollary} \label{mainresultascorrespondence}
For each integers $\ell,m$ with $\ell\ge2$ and $m\ge0$, there is a bijection 
\begin{align*}
{\Big\{
    \text{twisted $(m,\ell)$-Calabi--Yau algebras $A$}
\Big\}}_{/\simeq}
\;\longleftrightarrow\;
{\begin{Bmatrix}
    \text{$(\ell-1)$-replicated minimal} \\ \text{Auslander--Gorenstein algebras $B$}\\ \text{with selfinjective dimension $\ell+m-1$}
\end{Bmatrix}}_{/\simeq}
\end{align*}
which restricts to a bijection
\begin{align*}
{\begin{Bmatrix}
    \text{twisted $(m,\ell)$-Calabi--Yau algebras $A$} \\ 
    \text{of finite global dimension}
\end{Bmatrix}}_{/\simeq}
\;\longleftrightarrow\;
{\begin{Bmatrix}
    \text{$(\ell-1)$-replicated} \\ 
    \text{higher Auslander algebras $B$}\\ \text{with global dimension $\ell+m-1$}
\end{Bmatrix}}_{/\simeq}.
\end{align*}
The correspondence sends a twisted Calabi--Yau algebra $A$ to $A^{(\ell-1)}$, and a replicated minimal Auslander--Gorenstein algebra $B$ to the algebra $eBe$, where $e$ denotes the idempotent corresponding to the term $A$ in the upper-left $(1,1)$-corner of the replicated algebra.
\end{corollary}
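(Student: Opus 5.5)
The plan is to obtain both maps from Theorem \ref{mainresultsec3}, and then show they are mutually inverse up to isomorphism.

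\emph{Forward map.} Let $A$ be twisted $(m,\ell)$-Calabi--Yau. Here I read the left-hand side as implicitly including the Iwanaga--Gorenstein hypothesis, which is part of the Definition. Theorem \ref{mainresultsec3}(1)$\Rightarrow$(2) then says that $A^{(\ell-1)}$ is a minimal Auslander--Gorenstein algebra of selfinjective dimension $\ell+m-1$. By construction it is an $(\ell-1)$-replicated algebra, so $A\mapsto A^{(\ell-1)}$ is well defined on isomorphism classes: an isomorphism $A\simeq A'$ induces $A^{(\ell-1)}\simeq A'^{(\ell-1)}$ entrywise.

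\emph{Backward map.} Let $B$ lie on the right-hand side. By definition $B\simeq A^{(\ell-1)}$ for some algebra $A$. Take $e$ to be the idempotent $e_1$ of the $(1,1)$-corner. Then $eBe$ is the corner of the matrix algebra, so $eBe\simeq A$. Theorem \ref{mainresultsec3}(2)$\Rightarrow$(1), which is Proposition \ref{AG=>CY} with $n=\ell+m-1$, shows that $A$ is Iwanaga--Gorenstein and twisted $(m,\ell)$-Calabi--Yau. So the image of $B$ lies on the left-hand side.

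\emph{Mutual inverses and well-definedness.} The composite $A\mapsto A^{(\ell-1)}\mapsto e_1A^{(\ell-1)}e_1=A$ is the identity. For the other composite, $B\simeq A^{(\ell-1)}$ is sent back to $A^{(\ell-1)}$, which is isomorphic to $B$.

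\emph{Main obstacle.} The subtle point is whether $eBe$ depends on the chosen replicated presentation of $B$, that is, on the isomorphism $B\simeq A^{(\ell-1)}$. Injectivity of the forward map reduces to this question: does $A^{(\ell-1)}\simeq A'^{(\ell-1)}$ imply $A\simeq A'$? I would try to recover $A$ intrinsically from $B$. Write $e_1$ for the idempotent with $e_1B=[A]_1$. By \eqref{proj inj}, $[A]_1$ is precisely the maximal direct summand of $B_B$ with no nonzero projective-injective direct summands. This summand is determined up to isomorphism by $B$ via Krull--Schmidt. Therefore
\[\End_B([A]_1)\simeq e_1Be_1\simeq A\]
is determined by $B$ alone, which gives well-definedness and injectivity.

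\emph{Finite global dimension.} For the restricted bijection, I would use \cite[Proposition 1.1]{AI}: $\gldim A<\infty$ if and only if $\gldim A^{(\ell-1)}<\infty$. For algebras of finite global dimension, minimal Auslander--Gorenstein of selfinjective dimension $\ell+m-1$ coincides with higher Auslander of global dimension $\ell+m-1$, since then $\gldim=\idim$. This is exactly the second part of Theorem \ref{mainresultsec3}, so the bijection restricts as stated.
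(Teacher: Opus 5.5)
Your proposal is correct and follows the paper's proof: both maps come from the two directions of Theorem~\ref{mainresultsec3}, the composites are identities, and the restriction to finite global dimension comes from \cite[Proposition 1.1]{AI} via the last part of that theorem. Your additional Krull--Schmidt argument is a valid justification of a point the paper passes over with ``clearly inverse to each other'': by \eqref{proj inj}, $[A]_1$ is the non-projective-injective part of $B_B$, so $A\simeq\End_B([A]_1)$ is determined by $B$ alone, which shows that $B\mapsto eBe$ is well defined on isomorphism classes and that the forward map is injective.
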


\begin{proof}
The map $A\mapsto B$ is well-defined by Theorem \ref{mainresultsec3}(1)$\Rightarrow$(2).
Also the map $B\mapsto A$ is well-defined by Theorem \ref{mainresultsec3}(2)$\Rightarrow$(1) since $B=A^{(\ell-1)}$ holds for $A:=eBe$.
These maps are clearly inverse to each other.
The last part of Theorem \ref{mainresultsec3} shows that the correspondence restricts to algebras of finite global dimension.
\end{proof}


\subsection{Relation with stable higher Auslander algebras}

We start with giving additional information on Theorem \ref{mainresultsec3}.

\begin{proposition}\label{mainresultsec3+}
Let $A$ be a finite dimensional algebra which has finite global dimension and is twisted $(m,\ell)$-Calabi--Yau with $\ell\ge2$ and $m\ge0$. If $n:=\ell+m-2\ge1$, then $A^{(\ell-2)}$ is an $n$-representation-finite algebra with $n$-cluster tilting module $A^{(\ell-2)}\oplus DA^{(\ell-2)}$ whose $n$-Auslander algebra is $A^{(\ell-1)}$ and stable $n$-Auslander algebra is $A$.
\end{proposition}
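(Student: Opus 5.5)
By Theorem \ref{mainresultsec3}, $\Gamma:=A^{(\ell-1)}$ is a higher Auslander algebra of global dimension $\ell+m-1=n+1$. Hence $\gldim\Gamma\le n+1\le\domdim\Gamma$, so $\Gamma$ is an $n$-Auslander algebra. The higher Auslander correspondence \cite{I1} then gives an algebra $B$ and an $n$-cluster tilting module $M$ with $\Gamma\cong\End_B(M)$. Concretely, we take $P$ to be a basic additive generator of the projective-injective $\Gamma$-modules, and then set $B:=\End_\Gamma(P)$ and $M:=\Hom_\Gamma(P,\Gamma)$. The plan is to identify $B$ with $A^{(\ell-2)}$ and $M$ with $A^{(\ell-2)}\oplus DA^{(\ell-2)}$. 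Once this is done, the stable endomorphism algebra is obtained by killing the projective summands, and we check that what remains is $A$.

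\textbf{Identifying $B$ and $M$.} By \eqref{proj inj}, $P=\bigoplus_{i=1}^{\ell-1}[DA,A]_i$. We compute $\End_\Gamma(P)$ from the matrix form of $\Gamma$. Write $\epsilon$ for the idempotent of $\Gamma$ supported on the indices $[DA,A]_i$. Because $\Gamma$ is given as a matrix algebra over $\widehat{A}$, we have $\epsilon\Gamma\epsilon\cong e'\widehat{A}e'$ for a suitable idempotent $e'$. Using the standard identification of the projective-injective $\widehat{A}$-modules, $\widehat{A}$-Hom spaces between $[DA,A]_i$ and $[DA,A]_j$ are $A$ for $i=j$, $DA$ for $|i-j|=1$ in the appropriate direction, and $0$ otherwise. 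So $\End_\Gamma(P)$ is an $(\ell-1)\times(\ell-1)$ triangular matrix algebra with $A$ on the diagonal and $DA$ on the subdiagonal, namely $A^{(\ell-2)}$.

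Next we determine $M=\epsilon\Gamma$ as a right $B$-module. The indecomposable projective $\Gamma$-modules are the summands of $[A]_1$ and of the $[DA,A]_i$. Applying $\Hom_\Gamma(P,-)$ to $[DA,A]_i$ gives the indecomposable projective $B$-modules. Applying it to $[A]_1$ gives $\Hom_\Gamma(P,[A]_1)$, which is nonzero only at position $1$. There it equals $\Hom_\Gamma([DA,A]_1,[A]_1)\cong DA$, with the appropriate structure, so $\Hom_\Gamma(P,[A]_1)$ should be the injective $B$-module $[DA]_{\ell-1}$ up to the orientation convention. Together with the projective-injectives $[DA,A]_i$, $i\le\ell-2$, this shows $M\cong B\oplus DB$ in $\add$. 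By \eqref{proj inj} applied to $B=A^{(\ell-2)}$, the projectives are $[A]_1$ and the $[DA,A]_i$, and the injectives are $[DA]_{\ell-1}$ and the $[DA,A]_i$. So $\add M=\add(B\oplus DB)$, and $M$ is $n$-cluster tilting by the correspondence.

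\textbf{Global dimension and stable algebra.} It remains to check $\gldim B\le n$. By \cite[Proposition 1.1]{AI}, $\gldim A^{(\ell-2)}<\infty$. Moreover, for a $d$-cluster tilting module containing $B$ with $B$ of finite global dimension, one has $\gldim B\le n$ ($\tau_n$-theory in \cite{I2}). Alternatively, $\pdim DB$ can be read off from the resolution \eqref{key sequence 2} restricted via $\Hom_\Gamma(P,-)$, and this gives $\pdim DB\le n$. Finally, $\underline{\End}_B(M)=\End_\Gamma(\Gamma)/[\text{maps factoring through }P]$, which is $\Gamma/\Gamma\epsilon\Gamma$. In the matrix description of $\Gamma$ this quotient kills every index except the non-projective-injective summand $[A]_1$, leaving $A$; equivalently, $\underline{\End}_B(DB)=\End(DA)\cong A$.

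\textbf{Main obstacle.} The hardest step is the careful identification of $\Hom_\Gamma(P,[A]_1)$ as the injective $B$-module $[DA]_{\ell-1}$, together with all the left/right and shift conventions. I would handle it first by working inside $\widehat{A}$, where all these modules are idempotent truncations.
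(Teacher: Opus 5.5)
Your overall plan is the same as the paper's: realise $\Gamma=A^{(\ell-1)}$ as an $n$-Auslander algebra and read off the corner algebra, the cluster tilting module and the stable algebra from the matrix form. However, the step you flagged as the main obstacle is wrong as written, and it is the heart of the statement. Since $[DA,A]_i=e_{i+1}\Gamma$ is the $(i+1)$-st row, $P=\epsilon\Gamma$ with $\epsilon=e_2+\cdots+e_\ell$, and $\Hom_\Gamma(P,X)\cong X\epsilon$. Hence $\Hom_\Gamma(P,[A]_1)=[A]_1\epsilon=0$. Concretely, a morphism $[DA,A]_1\to[A]_1$ has components $\theta_1\colon DA\to A$ and $\theta_2=0$, and compatibility with the structure map $\mathrm{id}\colon A\otimes_A DA\to DA$ forces $\theta_1=0$. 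So $\Hom_\Gamma(P,\Gamma)=\Gamma\epsilon$ is just $B_B$. It does not contain $[DA]_{\ell-1}$, it is not $n$-cluster tilting, and its endomorphism algebra is $B$, not $\Gamma$. You reversed the Hom: the nonzero space is $\Hom_\Gamma([A]_1,[DA,A]_1)\cong DA$, which is a summand of the \emph{left} $B$-module $\Hom_\Gamma(\Gamma,P)={}_BP$. For right $B$-modules the correct recipe is $M=\Hom_\Gamma(P,D\Gamma)\cong DP$. The extra summand then comes from the non-projective injective $[DA]_\ell$, not from $[A]_1$: $\Hom_\Gamma(P,[DA]_\ell)=[DA]_\ell\epsilon\cong[DA]_{\ell-1}$. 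Since $D(\Gamma e_k)\cong[DA,A]_k$, we have $DP\cong\Gamma f$ with $f=e_1+\cdots+e_{\ell-1}$. This is precisely the paper's choice: $\proj\Gamma\cap\inj\Gamma=\add D(\Gamma f)$ and $C=f\Gamma f\cong A^{(\ell-2)}$. Reading off rows gives $\Gamma f\cong C\oplus[DA]_{\ell-1}$, so $\add\Gamma f=\add(C\oplus DC)$.

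Two further points. First, your general claim that finite global dimension plus an $n$-cluster tilting module forces $\gldim\le n$ is false. For linearly oriented $A_5$ modulo $\rad^2$, the projectives together with $S_1\oplus S_3\oplus S_5$ form a $2$-cluster tilting module, but the global dimension is $4$. Your alternative argument does work. The exact functor $(-)\epsilon$ sends the minimal resolution $0\to[A]_1\to P_n\to\cdots\to P_0\to[DA]_\ell\to0$ from Proposition \ref{CY=>AG} to a length-$n$ projective resolution of $[DA]_{\ell-1}$; it kills $[A]_1$, which confirms the correction above. Together with $\gldim B<\infty$ from \cite{AI}, this gives $\gldim B=\pdim DB\le n$. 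The paper gets the same bound directly from $\idim A^{(\ell-2)}\le\idim A^{(\ell-1)}-1$ in Proposition \ref{Iwanaga--Gorenstein and replicated}. Second, the projective-injective $\Gamma$-modules correspond to the \emph{injective} summands of $M$, so $\Gamma/\Gamma\epsilon\Gamma$ is the costable algebra $\overline{\End}_B(M)$. The stable one is $\Gamma/\Gamma f\Gamma\cong e_\ell\Gamma e_\ell$. Both happen to be $A$ (the $(1,1)$ and $(\ell,\ell)$ corners), so your conclusion is right but the stated reason is not. Your direct computation $\underline{\End}_B([DA]_{\ell-1})\cong\End_A(DA)\cong A$ is fine once you note that $\Hom_B([DA]_{\ell-1},B)=0$.
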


\begin{proof}
By Theorem \ref{mainresultsec3}, $B:=A^{(\ell-1)}$ is an $n$-Auslander algebra.
Let $f$ be idempotents of $B$ whose $(i,i)$-entry is $1$ for each $1\le i\le\ell-1$ and other entries are zero. Then $\proj B\cap\inj B=\add D(Bf)$ holds.
Let $C:=fBf\simeq A^{(\ell-2)}$.
By the higher Auslander correspondence (e.g.\ \cite[Proposition 3.6]{CIM}), $Bf$ is an $n$-cluster-tilting $C$-module such that $\End_C(Bf)=B$.
Thus $C\oplus DC$ is an $n$-cluster tilting $C$-module since $\add Bf=\add(C\oplus DC)$ holds as subcategories of $\mod C$.
Moreover, $C$ is $n$-representation-finite since the inequality in Proposition \ref{Iwanaga--Gorenstein and replicated} implies $\gldim C \leq \gldim B-1=n$.
Finally the stable $n$-Auslander algebra $\underline{\End}_C(M)$ is isomorphic to $B/BfB$ (e.g.\ \cite[Lemma 3.3]{CM}), which is isomorphic to $A$.
\end{proof}

Now we are ready to prove our main result.
\begin{theorem} \label{stableendomainresult}
Let $A$ be an algebra of finite global dimension.
The following are equivalent: 
\begin{enumerate}[\rm(1)]
    \item $A$ is Iwanaga--Gorenstein and twisted fractionally Calabi--Yau.    
    \item There exists a $d$-representation-finite algebra $B$ for some $d\ge1$ such that $A$ is isomorphic to the stable $d$-Auslander algebra of $B$.
\end{enumerate}
In this case, there exists $e\ge2$ such that for each $i\ge2$, there exists an $(ei-2)$-representation-finite algebra $B_i$ such that $A$ is isomorphic to the stable $(ei-2)$-Auslander algebra of $B_i$.
\end{theorem}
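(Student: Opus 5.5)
The plan is to derive (1)$\Rightarrow$(2) and the supplementary statement from Proposition \ref{mainresultsec3+}, and to take (2)$\Rightarrow$(1) from \cite[Theorem 8.7]{CDIM}. For (2)$\Rightarrow$(1), $\underline{\End}_B(M)$ with $M$ the $d$-cluster tilting module of a $d$-representation-finite algebra $B$ is twisted fractionally Calabi--Yau by \cite[Theorem 8.7]{CDIM}. Since $A$ has finite global dimension by hypothesis, it is automatically Iwanaga--Gorenstein.

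For (1)$\Rightarrow$(2), suppose $A$ is twisted $(m,\ell)$-Calabi--Yau. If $\ell=1$, then $A$ is selfinjective; having finite global dimension, it is semisimple. Thus $A$ is also twisted $(0,2)$-Calabi--Yau, since $\nu\simeq\mathrm{id}$ up to twist for a semisimple algebra. In every case we may therefore assume $\ell\ge2$. We then pass to the multiple $(mi,\ell i)$, which is again a valid Calabi--Yau pair for each $i\ge1$. Set $e:=\ell\ge2$ and consider $(mi,ei)$ for $i\ge2$. Then $n_i:=ei+mi-2\ge 2e-2\ge 2\ge1$, so the hypothesis $n\ge1$ of Proposition \ref{mainresultsec3+} is satisfied.

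Applying Proposition \ref{mainresultsec3+} to the pair $(mi,ei)$ gives that $B_i:=A^{(ei-2)}$ is $n_i$-representation-finite and that its stable $n_i$-Auslander algebra is $A$. This already proves (2). Since the $d$-cluster tilting module is unique up to additive equivalence, this stable Auslander algebra is well defined up to Morita equivalence, and Proposition \ref{mainresultsec3+} gives it up to isomorphism.

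The main snag is the indexing in the final sentence of the theorem, which asks for $(ei-2)$-representation-finiteness. Our parameter $n_i$ equals $(e+m)i-2$, so we should instead set $e:=\ell+m$. Then $(mi,\ell i)$ yields $n=(\ell+m)i-2=ei-2$ with $B_i=A^{(\ell i-2)}$, and $e\ge\ell\ge2$. In the semisimple case $m=0$ and $\ell=2$, so $e=2$ and $n=2i-2\ge2$ for $i\ge2$. This is the only point requiring care; everything else is a direct invocation of earlier results.
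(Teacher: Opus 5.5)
Your proposal is correct and follows the paper's proof essentially verbatim. The paper also quotes (2)$\Rightarrow$(1) from \cite[Theorem 8.7]{CDIM}, and it obtains (1)$\Rightarrow$(2) together with the final assertion by applying Proposition \ref{mainresultsec3+} to the pairs $(mi,\ell i)$, with $e:=\ell+m$ and $B_i=A^{(\ell i-2)}$. Your separate treatment of $\ell=1$ via semisimplicity is harmless but unnecessary, since the paper absorbs that case simply by passing to $(mi,\ell i)$ for $i\gg0$.
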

\begin{proof}
(1)$\Rightarrow$(2) Let $A$ be twisted $(m,\ell)$-Calabi--Yau. By replacing $(m,\ell)$ with $(mi,\ell i)$ for $i\gg0$, we can assume $\ell\ge2$ and $n:=\ell+m-2\ge1$. By Proposition \ref{mainresultsec3+}, the condition (2) holds.
More strongly, in this case, for $e:=\ell+m$, the last assertion holds.

(2)$\Rightarrow$(1) 
This was shown in \cite[Theorem 8.7]{CDIM}.
\end{proof}

The next application gives a description of the bounded derived categories of twisted fractionally Calabi--Yau algebras in terms of $\mathbb{Z}$-graded stable module categories of higher preprojective algebras.
For a $d$-representation-finite algebra, define the \emph{$(d+1)$-preprojective algebra} $\Pi(A):=\bigoplus\limits_{i=0}^{\infty}{\Hom_A(A, \tau_d^{-i}(A))}$ with the multiplication defined as $g\cdot f:=\tau_d^{-i}(g)f$ for $f\in \Hom_A(A, \tau_d^{-i}(A))$ and $g\in \Hom_A(A, \tau_d^{-j}(A)).$
Then $\Pi(A)$ is a selfinjective twisted periodic algebra, we refer to \cite{IO} for more details. We refer for example to \cite{Ami,Ami2} for the definition of higher cluster categories ${\mathcal C}_r(A)$ associated to algebras $A$ of finite global dimension.

\begin{corollary} \label{corollaryhigherpreprojective}
Let $A$ be a twisted $(m,\ell)$-Calabi--Yau algebra of finite global dimension, and $n:=\ell+m-2$.
Let $\Pi:=\Pi(A^{(\ell-2)})$ be the $(n+1)$-preprojective algebra of $A^{(\ell-2)}$, which is an $n$-representation-finite algebra.
Then $\underline{\mod}^{\mathbb{Z}} \Pi \cong \Db(\mod A)$ and
$\underline{\mod}\,\Pi \cong {\mathcal C}_{n+1}(A)$, where
${\mathcal C}_{n+1}(A)$
 denotes the $(n+1)$-cluster category of $A$. 
 \end{corollary}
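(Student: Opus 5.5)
Our plan is to combine Proposition \ref{mainresultsec3+} with the results of Iyama--Oppermann \cite[Section 4]{IO} on $n$-representation-finite algebras, using $A$ as the stable $n$-Auslander algebra of $C:=A^{(\ell-2)}$. First assume $n\ge1$; the degenerate case $n=0$ forces $\ell=2$, $m=0$, where $A^{(0)}=A$ is selfinjective of finite global dimension, hence semisimple, and the statements reduce to trivialities (or are excluded by convention). By Proposition \ref{mainresultsec3+}, $C$ is $n$-representation-finite with $n$-cluster tilting module $M=C\oplus DC$, its $n$-Auslander algebra is $A^{(\ell-1)}$, and $\underline{\End}_C(M)\simeq A$. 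This justifies the phrase ``which is an $n$-representation-finite algebra'' and makes $\Pi=\Pi(C)$ a finite dimensional selfinjective algebra.

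Next we invoke the structure theory of \cite{IO}: for an $n$-representation-finite algebra $C$ with $n$-cluster tilting module $M$, the preprojective algebra $\Pi(C)$ is selfinjective, graded by $\Z$, and the graded stable category $\underline{\mod}^{\Z}\Pi$ has a tilting object $T$ built from the truncations of $\Pi$ (essentially $\bigoplus_{i} \Pi_{\ge i}(i)$ over one period) whose endomorphism algebra is the stable $n$-Auslander algebra $\underline{\End}_C(M)$. Since $\underline{\mod}^{\Z}\Pi$ is algebraic and Krull--Schmidt, Keller's theorem gives a triangle equivalence $\underline{\mod}^{\Z}\Pi\simeq\Kb(\proj \underline{\End}_C(M))$; combined with $\underline{\End}_C(M)\simeq A$ and $\gldim A<\infty$ this yields $\underline{\mod}^{\Z}\Pi\simeq\Db(\mod A)$. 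The main obstacle is this step: one must check that the hypotheses in \cite{IO} are satisfied (in particular that their tilting object's endomorphism ring is exactly $\underline{\End}_C(M)$, not a twisted or opposite version) and that $\gldim$ of the endomorphism algebra is finite, which here is automatic from our hypothesis on $A$.

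Finally, for the ungraded statement, the degree shift $(1)$ on $\underline{\mod}^{\Z}\Pi$ corresponds under the above equivalence to an autoequivalence $F$ of $\Db(\mod A)$, and \cite{IO} identifies it with $\nu[-(n+1)]$ up to the twist, i.e.\ $F\simeq \nu_{n+1}^{-1}$ (or its inverse) where $\nu_{n+1}=\nu\circ[-(n+1)]$. The forgetful functor $\underline{\mod}^{\Z}\Pi\to\underline{\mod}\,\Pi$ then induces a fully faithful functor from the orbit category $\Db(\mod A)/\nu_{n+1}$ which is dense, giving via Keller's/Amiot's construction a triangle equivalence with the triangulated hull, i.e.\ $\underline{\mod}\,\Pi\simeq\mathcal{C}_{n+1}(A)$; here we rely on \cite[Section 4]{IO} and \cite{Ami,Ami2} for the comparison of orbit category with the cluster category.
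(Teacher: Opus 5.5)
Your proposal is correct and matches the paper's proof. The paper applies Proposition \ref{IO result} (i.e.\ \cite[Theorems 4.7 and 4.15]{IO}) to $\Lambda:=A^{(\ell-2)}$ and $d:=n$, with Proposition \ref{mainresultsec3+} supplying $\Gamma=\underline{\End}_{\Lambda}(\Lambda\oplus D\Lambda)\cong A$, so your re-derivation of the internals of \cite{IO} is not needed; if you keep it, do not claim that the functor out of the orbit category $\Db(\mod A)/\nu_{n+1}$ is already dense, since \cite{IO} prove the equivalence with the triangulated hull $\mathcal{C}_{n+1}(A)$, of which the orbit category is in general only a full subcategory.
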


The proof of Corollary \ref{corollaryhigherpreprojective}
depends on the following result.

\begin{proposition}\cite[Theorem 4.7 and 4.15]{IO}\label{IO result}
Let $\Lambda$ be a $d$-representation-finite algebra with $d\ge1$, and $\Gamma$ the stable $d$-Auslander algebra of $\Lambda$, and $\Pi$ denotes the $(d+1)$-preprojective algebra of $\Lambda$.
Then there exist triangle equivalences $\Db(\mod \Gamma) \cong \underline{\mod}^{\mathbb{Z}} \Pi$ and $\underline{\mod}\,\Pi \cong \CC_{d+1}(\Gamma)$, the cluster category of $\Gamma$. 
\end{proposition}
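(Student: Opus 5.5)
We would follow the strategy of Iyama--Oppermann and reduce both equivalences to a tilting argument in the $\mathbb{Z}$-graded stable category of $\Pi$.

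\textbf{Step 1: grading and selfinjectivity of $\Pi$.} Write $M:=\bigoplus_{i\ge0}\tau_d^{-i}(\Lambda)$, the $d$-cluster tilting module of $\Lambda$; it is a finite sum because $\Lambda$ is $d$-representation-finite. Then $\Pi=\bigoplus_{i\ge0}\Pi_i$ with $\Pi_i=\Hom_\Lambda(\Lambda,\tau_d^{-i}\Lambda)$ is a positively graded finite dimensional algebra with $\Pi_0=\Lambda$. We would first record two facts.
\begin{enumerate}[(a)]
\item $\Pi_\Lambda\cong M$ as $\Lambda$-modules, and hence $\Pi$ is finite dimensional.
\item $\Pi$ is selfinjective. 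Via the $d$-Auslander--Reiten translation, $\tau_d^{-}$ permutes the indecomposable summands of $M$ and ends at injectives. Consequently the graded Nakayama functor of $\Pi$ sends each $e\Pi$ to a graded shift of some $e'\Pi$.
\end{enumerate}
So $\mod^{\mathbb{Z}}\Pi$ is Frobenius and $\underline{\mod}^{\mathbb{Z}}\Pi$ is a Hom-finite algebraic triangulated category.

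\textbf{Step 2: a tilting object and its endomorphisms.} Since $\Pi$ is positively graded, selfinjective, and $\Pi_0=\Lambda$ has finite global dimension, we would take the Yamaura-type object
\[T:=\bigoplus_{i\ge0}\bigl(\Pi(i)\bigr)_{\le 0}\]
in $\underline{\mod}^{\mathbb{Z}}\Pi$. This sum is finite because $\Pi_j=0$ for $j\gg0$. We would show that $T$ is tilting:
\begin{itemize}
\item \emph{Vanishing.} $\underline{\Hom}(T,T[k])=0$ for $k\neq0$, by computing with the graded projective presentations of the truncations.
\item \emph{Generation.} $\mathrm{thick}\,T=\underline{\mod}^{\mathbb{Z}}\Pi$, by induction on the length of the grading support of a module, using that every degree-$j$ simple lies in the thick closure once $\gldim\Pi_0<\infty$.
\end{itemize}
Next we identify $\underline{\End}(T)$. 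The $\Pi$-module maps between truncations are determined degreewise by $\Lambda$-maps between the $\tau_d^{-i}\Lambda$, so $\End(T)\cong\End_\Lambda(M)$. The maps factoring through graded projectives correspond to maps factoring through $\add(\Lambda\oplus D\Lambda)\cap\add M$, i.e.\ projective summands. Hence $\underline{\End}(T)\cong\underline{\End}_\Lambda(M)=\Gamma$. Keller's theorem for algebraic triangulated categories then yields $\Db(\mod\Gamma)\simeq\underline{\mod}^{\mathbb{Z}}\Pi$.

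\textbf{Step 3: the ungraded stable category.} The forgetful functor $\underline{\mod}^{\mathbb{Z}}\Pi\to\underline{\mod}\,\Pi$ realises $\underline{\mod}\,\Pi$ as the orbit category for the degree shift $(1)$, up to triangulated hull. Under the equivalence of Step 2, we would check that $(1)$ corresponds to $\nu_\Gamma^{-1}[d+1]$, or to its inverse depending on conventions. The argument is that in $\underline{\mod}^{\mathbb{Z}}\Pi$ the Serre functor is $(-1)[-1]$ twisted by the graded Nakayama automorphism, while $\Omega^{-(d+1)}\cong(1)$ up to twist, coming from the $(d+1)$-periodicity of the bimodule resolution of $\Pi$. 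Granting this identification, $\underline{\mod}\,\Pi$ is the triangulated hull of $\Db(\mod\Gamma)/\nu_\Gamma^{-1}[d+1]$, which is $\CC_{d+1}(\Gamma)$ by Amiot's and Keller's constructions. The hypothesis $\gldim\Gamma<\infty$ needed there follows from Step 2.

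\textbf{Main obstacle.} We expect the hardest part to be the identification in Step 3 of the grading shift with $\nu^{-1}[d+1]$, together with the verification that the orbit category is already triangulated, i.e.\ equals its hull. We would attack this first via the exact sequences
\[0\to\Pi(-1)\to P_d\to\cdots\to P_0\to\Pi\to0\]
of graded bimodules obtained from the $d$-almost split sequences. These give $\Omega^{d+1}_{\Pi^e}\Pi\cong{}_\sigma\Pi(-1)$. Tensoring with $T$ and comparing with Serre duality in $\Db(\mod\Gamma)$ should then give the required isomorphism of functors.
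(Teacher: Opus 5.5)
The paper gives no proof of this statement; it is imported from Iyama--Oppermann. Your architecture is the standard route to results of this kind: a Yamaura-type tilting object in $\underline{\mod}^{\mathbb{Z}}\Pi$, its stable endomorphism ring identified with $\Gamma$, the degree shift $(1)$ matched with $\nu_\Gamma^{-1}[d+1]$, and then Keller's/Amiot's triangulated hull. Your target identification in Step 3 is also the right one.

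\textbf{Step 3 (the gap).} The mechanism you propose for the identification of $(1)$ is false, and this identification is the real content of the second equivalence. Neither $\Omega^{d+1}_{\Pi^e}\Pi\cong{}_\sigma\Pi(-1)$ (equivalently $\Omega^{-(d+1)}\cong(1)$ up to a twist) nor ``the Serre functor is $(-1)[-1]$ up to a twist'' holds in general.
\begin{itemize}
\item Twisting by a graded automorphism $\sigma$ of $\Pi$ fixes $T$ up to isomorphism. So if $[d+1]\cong(1)\circ\sigma^*$ and $(1)\cong\nu_\Gamma^{-1}[d+1]$, then $\nu_\Gamma(\Gamma)\cong\Gamma$, i.e.\ $D\Gamma\cong\Gamma$.
\item Since $\Gamma$ has finite global dimension, this forces $\Gamma$ to be semisimple. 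But already for $\Lambda=KA_3$ linearly oriented and $d=1$, one has $\Gamma\cong KA_3/\rad^2(KA_3)$.
\item Concretely, $D\Pi$ is not a degree shift of a twist of $\Pi$. The socle of $e_P\Pi$ sits in degree $\ell_P$, where $\tau_d^{-\ell_P}P$ is injective, and these degrees vary with $P$. For $\Lambda=KA_2$ the graded Nakayama functor already moves the two graded simples by different shifts.
\item The bimodule periodicity lives in $\Omega^{d+2}_{\Pi^e}$, not $\Omega^{d+1}_{\Pi^e}$; compare $\Omega^3_{\Pi^e}\Pi$ for classical Dynkin preprojective algebras.
\end{itemize}
What you actually need is the graded $(d+1)$-Calabi--Yau property $\mathbb{S}\cong(-1)\circ[d+1]$ on $\underline{\mod}^{\mathbb{Z}}\Pi$, equivalently $\nu_\Pi\circ\Omega^{d+2}\cong(-1)$. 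It must hold in a form compatible with a dg enhancement, so that Keller's universal property applies. This gives $(1)\cong\mathbb{S}^{-1}[d+1]$ with no twist.

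You also do not need the orbit category itself to be triangulated, since $\CC_{d+1}(\Gamma)$ is defined as the hull. What is needed is that $\underline{\mod}^{\mathbb{Z}}\Pi/(1)\to\underline{\mod}\,\Pi$ is fully faithful and that gradable modules generate $\underline{\mod}\,\Pi$.

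\textbf{Step 2 (repairable).} The endomorphism identification is mismatched. Computing in $\Db(\mod\Lambda)$, for $0\le i\le\ell_P$ and $0\le j\le\ell_Q$ one gets
\[
\Hom_{\mod^{\mathbb{Z}}\Pi}\bigl(e_P\Pi(i)_{\le 0},e_Q\Pi(j)_{\le 0}\bigr)\cong e_Q\Pi_{j-i}e_P\cong\Hom_\Lambda(\tau_d^{-i}P,\tau_d^{-j}Q).
\]
\begin{itemize}
\item Hence $\End(T')\cong\End_\Lambda(M)$ holds for the finite object $T'=\bigoplus_P\bigoplus_{i=0}^{\ell_P}e_P\Pi(i)_{\le 0}$, not for your infinite sum.
\item The graded projective summands of $T'$ are the $e_P\Pi(\ell_P)$. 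These correspond to the injective summands $\tau_d^{-\ell_P}P$ of $M$, not to projective ones.
\item You must also check that maps factoring through arbitrary graded projectives $e_Q\Pi(j)$ already factor through these summands. One way is to locate the socle of $e_P\Pi(i)_{\le 0}$ in degree $0$.
\item With that, you get $\underline{\End}(T)\cong\overline{\End}_\Lambda(M)$. You then need the equivalence $\tau_d\colon\underline{\add M}\to\overline{\add M}$ to reach $\Gamma$.
\end{itemize}
Factoring out $\add(\Lambda\oplus D\Lambda)$, as you suggest, gives too small an algebra. Finally, Keller's theorem only yields $\Kb(\proj\Gamma)\simeq\underline{\mod}^{\mathbb{Z}}\Pi$. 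To get $\Db(\mod\Gamma)$ you must establish $\gldim\Gamma<\infty$ separately; it cannot be read off from Step 2 as you claim.
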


 \begin{proof}[Proof of Corollary \ref{corollaryhigherpreprojective}]
Specialising Proposition \ref{IO result} to our situation, we set $\Lambda:=A^{(\ell-2)}$ and $d:=n$.
In this case $\Gamma \cong A$ by Proposition \ref{mainresultsec3+}. Thus we obtain the desired equivalences.
\end{proof}


We mention the following two folklore open problems that we formulate as conjectures here:
\begin{conjecture} \label{acyclicquiverconj}
Let $A$ be an algebra of finite global dimension.
\begin{enumerate}
    \item If $A$ is twisted fractionally Calabi--Yau, then $A$ has an acyclic Ext-quiver.
    \item If $A$ is $d$-representation-finite, then $A$ has an acyclic Ext-quiver.
\end{enumerate}
\end{conjecture}
Clearly, (1) implies (2) as every $d$-representation-finite algebra is twisted fractionally Calabi--Yau \cite[Theorem 1.1]{HI}.
Our results show that in fact both conjectures are equivalent:
\begin{proposition}
The two conjectures in Proposition \ref{acyclicquiverconj} are equivalent.
\end{proposition}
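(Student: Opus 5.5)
Conjecture~\ref{acyclicquiverconj}(1) implies Conjecture~\ref{acyclicquiverconj}(2), as already noted, because every $d$-representation-finite algebra is twisted fractionally Calabi--Yau \cite[Theorem 1.1]{HI}. So the work is in showing that (2) implies (1).

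Assume (2), and let $A$ have finite global dimension and be twisted $(m,\ell)$-Calabi--Yau. First I replace $(m,\ell)$ by $(mi,\ell i)$ with $i\gg0$, so that $\ell\ge3$ and $n:=\ell+m-2\ge1$. By Proposition~\ref{mainresultsec3+}, the algebra $C:=A^{(\ell-2)}$ is then $n$-representation-finite. Conjecture (2) therefore tells us that $C$ has an acyclic Ext-quiver.

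Next I pass from $C$ to $A$. Since $\ell-2\ge1$, the algebra $A$ is the idempotent subalgebra $eCe$, where $e$ is the idempotent of the $(1,1)$-corner. The key observation is that $A$ is a corner of a lower triangular matrix algebra: $e$ corresponds to a block with no incoming arrows. Concretely, $C$ is the triangular matrix algebra $\left(\begin{smallmatrix} A & 0\\ M & C'\end{smallmatrix}\right)$ with $C'=A^{(\ell-3)}$ and $M=[DA]_1$. For simple $A$-modules $S,T$ the embedding $[?]_1\colon \mod A\to\mod C$ is fully faithful and exact, and I expect it to preserve all Ext-groups. To check this, write the $C$-modules as triples $(X,Y,\phi)$. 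Then $[?]_1$ is the functor $X\mapsto (X,0,0)$, and it has the exact left adjoint $(X,Y,\phi)\mapsto X$ (that is, restriction by $e$). Moreover, $eC=[A]_1$ is a projective $C$-module that restricts to $A$. Hence a projective resolution of $S$ over $A$ stays a projective resolution over $C$, and $\Ext^i_C([S]_1,[T]_1)=\Ext^i_A(S,T)$. It follows that the Ext-quiver of $A$ (all $\Ext^i$, $i\ge1$, between simples) is a full subquiver of the Ext-quiver of $C$. A full subquiver of an acyclic quiver is acyclic, so $A$ has an acyclic Ext-quiver, which is (1).

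The main obstacle is making sure that "Ext-quiver" is understood consistently, whether it means all higher Ext-groups or only $\Ext^1$. The other point needing care is verifying that $[A]_1$ really is projective over $C$. The list $\proj A^{(m)}$ given in \eqref{proj inj} confirms that $[A]_1$ is projective, so the lifting of resolutions goes through. The argument uses only $\Ext$ between simples in the image of $[?]_1$, so it works for either convention.
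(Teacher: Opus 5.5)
Your proof is correct and follows the same route as the paper: you get $A^{(\ell-2)}$ $n$-representation-finite from Proposition~\ref{mainresultsec3+}, then transfer acyclicity of the Ext-quiver back to $A$ through the triangular structure, and you justify that last step in more detail than the paper by proving $\Ext^i_C([S]_1,[T]_1)\cong\Ext^i_A(S,T)$. One small slip: restriction $N\mapsto Ne=\Hom_C(eC,N)$ is the \emph{right} adjoint of $[?]_1=-\otimes_A eC$ (the left adjoint sends $(X,Y,\phi)$ to $\Cok\phi$), but your Ext computation only uses exactness of $[?]_1$ and projectivity of $[A]_1$, so nothing breaks.
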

\begin{proof}
We only have to show that (2) implies (1):
Let $A$ be a twisted $\frac{m}{\ell}$-Calabi--Yau algebra. Then as we saw in the proof of Theorem \ref{stableendomainresult}, $A^{(\ell-2)}$ is a $d$-representation-finite algebra for some $d$. But in general, $A$ has an acyclic Ext-quiver if and only if $A^{(i)}$ has an acyclic Ext-quiver for some (or any) $i \geq 0$ as replicated algebras are lower triangular matrix ring algebras with $A$ on the diagonals.
\end{proof}

\subsection{Replicated algebras of Serre-formal fractionally Calabi--Yau algebras}
In this subsection, we give an explicit relationship between Theorem \ref{mainresultsec3} and \cite[Theorem 1.3]{CIM}.
We recall the following central notion of \cite{CIM}.

\begin{definition}
We call an Iwanaga-Gorenstein algebra $A$ \emph{Serre-formal} if $\nu^i(A)$ is isomorphic to a direct sum of stalk complexes for any $i\in\mathbb{Z}$ in the bounded derived category $\Db(\mod A)$ of finitely generated $A$-modules.
\end{definition}

Many important classes of algebras are Serre-formal, see \cite{CIM}. On the other hand, we introduce the following new notion for replicated algebras.


\begin{definition}\label{stalk property}
We say that a replicated algebra $A^{(m)}$ satisfies the \emph{stalk property} if for each indecomposable injective $A$-module $I$ and $1\le i\le m$, there exists a sequence $1\le\ell_1<\cdots<\ell_m$ and $X_1,\ldots,X_m\in\mod A$ such that $\Omega^{\ell_i}([I]_{m+1})=[X_i]_{m+1-i}$.
\end{definition}

The following version of Theorem \ref{mainresultsec3} gives a more explicit version of \cite[Theorem 1.3]{CIM}.

\begin{theorem}\label{Serre-formal and stalk}
Let $A$ be a finite dimensional algebra over a field $k$, and $\ell\ge2$ and $m\ge0$ be integers. Then the following are equivalent:
\begin{enumerate}[\rm(1)]
    \item $A$ is Iwanaga--Gorenstein, twisted $(m,\ell)$-Calabi--Yau and Serre-formal.
    \item $A^{(\ell-1)}$ is a minimal Auslander--Gorenstein algebra of selfinjective dimension $\ell+m-1$ satisfying the stalk property.
    \end{enumerate}
In particular, the following are equivalent:
\begin{enumerate}[\rm(1)]
    \item $A$ has finite global dimension and is twisted $(m,\ell)$-Calabi--Yau and Serre-formal.
    \item $A^{(\ell-1)}$ is a higher Auslander algebra of global dimension $\ell+m-1$ satisfying the stalk property.
    \end{enumerate}
\end{theorem}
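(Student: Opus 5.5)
By Theorem \ref{mainresultsec3}, the Iwanaga--Gorenstein plus twisted $(m,\ell)$-Calabi--Yau condition in (1) is already equivalent to $A^{(\ell-1)}$ being minimal Auslander--Gorenstein of selfinjective dimension $\ell+m-1$. So the plan is to show, assuming both of these, that Serre-formality of $A$ is equivalent to the stalk property of $A^{(\ell-1)}$. The finite global dimension version then follows exactly as in the proof of Theorem \ref{mainresultsec3}, using \cite[Proposition 1.1]{AI}. Throughout, the tool is the explicit projective resolution \eqref{key sequence 2} of $[DA]_\ell$, obtained as the total complex of the double complex \eqref{key sequence}, and the direct sum decomposition $[DA]_\ell\simeq\bigoplus_I [I]_\ell$ over indecomposable injectives $I$.

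For (1)$\Rightarrow$(2), I would fix an indecomposable injective $I$ and run the construction before Proposition \ref{CY=>AG} with $DA$ replaced by $I$. Serre-formality gives $\nu^{i}(I)\simeq\bigoplus_j M_{ij}[s_{ij}]$ in $\Db(\mod A)$ for $0\le i\le \ell-1$. Since $\nu^i(I)$ is a summand of $\nu^{i+1}(A)$ and the $\nu^k(A)$ are stalks, I expect indecomposability to force $\nu^i(I)\simeq X_i[s_i]$ to be a single stalk up to the twist. Then $Q_i$ can be chosen as a projective resolution of $X_i$, shifted by $s_i$.

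Next I would compute the syzygies of $[I]_\ell$ over $A^{(\ell-1)}$ column by column. Using Observation \ref{eg-modules}(3) and its dual, I would show by induction on $i$ that, at homological degree $\ell_i:=i+s_i$ (up to normalization), the syzygy is exactly the stalk $[X_i]_{\ell-i}$, where $X_i=H^{-s_i}(\nu^iI)$. The idea is that the projective-injective modules $[\nu P,P]_{\ell-i}$ built from the resolution $Q_{i-1}$ cancel all other components. The key point is that for a stalk complex, the cone of $a_i$ collapses onto a single module. The strict increase $\ell_1<\cdots<\ell_{\ell-1}$ comes from the $+1$ gained when passing to the next column, together with the monotonicity of $s_i$. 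I would justify that monotonicity either via Proposition \ref{Iwanaga--Gorenstein and replicated} or directly from the shape of the minimal resolution \eqref{key sequence 3}.

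For (2)$\Rightarrow$(1), I would reverse the argument. The stalk property says that the minimal resolution of $[I]_\ell$ passes through stalk modules $[X_i]_{\ell-i}$. Comparing with the decomposition of \eqref{key sequence 2} into \eqref{key sequence 3} plus a contractible part, as in the proof of Proposition \ref{AG=>CY}, this should identify each $\nu^i(I)$, for $1\le i\le\ell-1$, with a shifted stalk $X_i[\ell_i-i]$. Together with $\nu^\ell(A)\simeq A[m]$ up to twist, this covers $\nu^k(A)$ for $1\le k\le \ell$. Fractional Calabi--Yau periodicity, the twist by an automorphism preserving stalkness, and applying $\nu^{-1}$ then extend stalkness to all $k\in\mathbb{Z}$.

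The main obstacle is the column-by-column bookkeeping. One must check that minimality is preserved, so that no cancellation between columns hides or creates non-stalk syzygies; this is the radical argument used in Proposition \ref{AG=>CY}. One must also verify that the indices $\ell_i$ are strictly increasing. I would first handle the case of a single indecomposable $I$ with radical differentials, and then reduce the general case to it by additivity.
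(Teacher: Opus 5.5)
Your direction (1)$\Rightarrow$(2) is sound, and it is more self-contained than the paper, which simply cites \cite[Lemma 5.11]{CIM}. Choose every $Q_i$ minimal. Then the total complex of your double complex is already the minimal resolution of $[I]_\ell$, since maps between different columns are automatically radical. The column $[\nu Q_i,Q_i]_{\ell-i-1}$ produces the syzygy $[X_{i+1}]_{\ell-i-1}$, and $\ell_{i+1}-\ell_i=1+\pdim X_i$. One correction: the monotonicity of the $s_i$ does not come from Proposition \ref{Iwanaga--Gorenstein and replicated}. It holds because $s_{i+1}-s_i=\pdim X_i\ge 0$: the minimal complex $\nu P(X_i)$ is the minimal injective coresolution of $X_{i+1}$, suitably shifted.

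The gap is in (2)$\Rightarrow$(1), exactly at ``this should identify each $\nu^i(I)$ with a shifted stalk''. The argument of Proposition \ref{AG=>CY} that you invoke only controls the last column, because it uses that the terms of $[Q_{\ell-1}]_1$ are the only non-injective ones. The intermediate columns consist of projective-injective modules, and nothing in that argument shows that $\nu Q_i$ has a single cohomology.

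What is missing is a localization of the minimal resolution $(P_j)_j$ of $[I]_\ell$. The segment
\[0\to[X_{i+1}]_{\ell-i-1}\to P_{\ell_{i+1}-1}\to\cdots\to P_{\ell_i}\to[X_i]_{\ell-i}\to0\]
has all its terms in $\add[DA,A]_{\ell-i-1}$, for two reasons:
\begin{itemize}
\item Summands $[DA,A]_{\ell-i-2}$ would give, after applying the idempotent at position $\ell-i-2$, a bounded exact complex of injective $A$-modules with radical differentials, hence zero.
\item $[A]_1$ cannot occur before the last term, since $A^{(\ell-1)}$ is minimal Auslander--Gorenstein.
\end{itemize}
Applying the idempotents at positions $\ell-i$ and $\ell-i-1$ then yields a projective resolution of $X_i$ and its image under $\nu$. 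That image is exact except for the kernel $X_{i+1}$ at the end. Hence $\nu(X_i)\simeq X_{i+1}[\ell_{i+1}-\ell_i-1]$, and iterating from $X_0=I$ gives $\nu^i(I)\simeq X_i[\ell_i-i]$. This is the paper's argument.

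In your framework, this amounts to three steps: choose all $Q_i$ minimal, so there is no contractible part; observe that column $i+1$ must then occupy exactly the degrees of this segment; and read off $\nu Q_i$ at position $\ell-i-1$. With this in place, your final step extending stalkness to all $\nu^k(A)$ via twisted periodicity is fine.
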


\begin{proof}
It suffices to prove that the first two conditions are equivalent. (1)$\Rightarrow$(2) was shown in \cite[Lemma 5.11]{CIM}. In the rest, we prove (2)$\Rightarrow$(1).
Fix an indecomposable injective $A$-module $I$. Since $A^{[m]}$ is minimal Auslander-Gorenstein, the equality \eqref{proj inj} shows that $X_m$ in Definition \ref{stalk property} has to be projective as an $A$-module.
Moreover, the stalk property implies that there exists a minimal projective resolution
\[0\to Q_n\xrightarrow{f_n}\cdots\xrightarrow{f_1} Q_0\to[I]_{m+1}\to0\]
of $A^{[m+1]}$-module $[I]_{m+1}$ such that each $Q_i
$ with $0\le i\le n-1$ is a projective-injective $A^{[m]}$-module, and $\Image f_{\ell_i}=[X_i]_{m+1-i}$ for each $1\le i\le m$. This implies that $n=\ell_m$ and $Q_n=[X_m]_1$.

Let $X_0:=I$, and we claim that $\nu(X_i)\simeq X_{i+1}[\ell_{i+1}-\ell_i-1]$ holds in $\Db(\mod A)$ for each $0\le i\le m-1$, which implies that $A$ is Serre-formal.
Consider the exact sequence
\[0\to [X_{i+1}]_{m-i}\to Q_{\ell_{i+1}-1}\xrightarrow{f_{\ell_{i+1}-1}}\cdots\xrightarrow{f_{\ell_i+2}} Q_{\ell_i+1}\xrightarrow{f_{\ell_i+1}} Q_{\ell_i}\to [X_i]_{m+1-i}\to0.\]
Since this is a part of minimal projective resolution, each $Q_j$ which appears in this sequence belongs to $[DA,A]_{m-i}$. For idempotents $e:=e_{m+1-i}$ and $e':=e_{m-i}$ of $A^{[m]}$, applying $(-)e$ and $(-)e'$ gives exact sequences of $A$-modules
\[\xymatrix@R0.5em@C2em{
&0\ar[r]&Q_{\ell_{i+1}-1}e\ar[rr]^-{f_{\ell_{i+1}-1}e}&&\cdots\ar[rr]^-{f_{\ell_i+1}e}&&Q_{\ell_i}e\ar[r]&X_i\ar[r]&0\\
0\ar[r]&X_{i+1}\ar[r]&Q_{\ell_{i+1}-1}e'\ar[rr]^-{f_{\ell_{i+1}-1}e'}&&\cdots\ar[rr]^-{f_{\ell_i+1}e'}&&Q_{\ell_i}e'\ar[r]&0,
}\]
where each $Q_je$ are projective and each $Q_je'$ are injective. In particular, we have
\[\nu(X_i)\simeq\nu[Q_{\ell_{i+1}-1}e\to\cdots\to Q_{\ell_i}e]\simeq[Q_{\ell_{i+1}-1}e'\to\cdots\to Q_{\ell_i}e']\simeq X_{i+1}[\ell_{i+1}-\ell_i-1]\]
in $\Db(\mod A)$, as desired.
\end{proof}

\subsection{Calabi--Yau algebras of dimension $\frac{n}{2}$}

We collect several applications now.
Our first application gives a new inequality for relating the selfinjective dimension $\idim A$ and $m$ for a twisted $\frac{m}{\ell}$-Calabi--Yau algebra $A$. We remark that one can also give a proof without using replicated algebras, but we found this result by wondering what the inequalities in Proposition \ref{Iwanaga--Gorenstein and replicated} give in the twisted fractionally Calabi--Yau case and the proof is then quite natural and short.

\begin{proposition} \label{inequalitytheorem}
Let $\ell \geq 2$ and $A$ be a twisted $\frac{m}{\ell}$-Calabi--Yau algebra.
Then $\frac{m}{\ell-1}\leq \idim A\leq m$.

\end{proposition}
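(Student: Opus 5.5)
Our plan is to combine Theorem \ref{mainresultsec3} with the chain of inequalities in Proposition \ref{Iwanaga--Gorenstein and replicated}. First, we note that a twisted $\frac{m}{\ell}$-Calabi--Yau algebra is, by definition, Iwanaga--Gorenstein. We also check that $m\ge0$ holds automatically. This can be seen from the fact that $\nu$ is right t-exact and $\nu^\ell(A)\simeq A[m]$. Alternatively, we can compare cohomology: $\nu(A)=DA$ is a module, and applying $\nu$ repeatedly to modules only produces cohomology in non-positive degrees. So $A[m]$ has its cohomology in degree $-m\le 0$. Hence Theorem \ref{mainresultsec3}(1)$\Rightarrow$(2) applies and gives $\idim A^{(\ell-1)}=\ell+m-1$.

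\emph{Lower bound.} We apply Proposition \ref{Iwanaga--Gorenstein and replicated} with $m$ replaced by $\ell-1$. This gives $\idim A\le \idim A^{(\ell-1)}-(\ell-1)=m$ at once. For the bound $\frac{m}{\ell-1}\le\idim A$ we use the upper bound in the remark after Proposition \ref{Iwanaga--Gorenstein and replicated}, namely $\idim A^{(k)}\le (k+1)\idim A+k$, which comes from Remark \ref{remarkFGR} by induction. With $k=\ell-1$ this reads
\[\ell+m-1\le \ell\,\idim A+\ell-1, \qquad\text{that is,}\qquad m\le \ell\,\idim A.\]
This yields only $\frac{m}{\ell}\le\idim A$, which is weaker than claimed. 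So we need a sharper estimate, and this is the main obstacle.

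To get $\frac{m}{\ell-1}$ we would instead argue directly in $\Db(\mod A)$, using the sequence \eqref{key sequence}, or equivalently by repeatedly applying $\nu$. Write $g:=\idim A$. Since $\nu^{-1}=\RHom_A(DA,A)$ and $\pdim DA=g$, the functor $\nu^{-1}$ sends a complex concentrated in degrees $[a,b]$ to one concentrated in $[a,b+g]$. The key step is the identity $A\simeq \nu^{-\ell}(A)[m]$, equivalently $\nu^{-(\ell-1)}(DA)\simeq A[-m]$. Now $DA$ is concentrated in degree $0$, and each of the $\ell-1$ applications of $\nu^{-1}$ moves the top degree up by at most $g$. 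Hence $\nu^{-(\ell-1)}(DA)$ lives in degrees $\le (\ell-1)g$. Since $A[-m]$ has its cohomology in degree $m$, we get $m\le(\ell-1)g$, as desired.

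We would double-check the degree conventions in this step: for a module $M$, $\RHom_A(DA,M)$ has cohomology in degrees $0,\dots,\pdim DA$, and $\pdim DA=\idim A_A=g$. The same bound can also be read off from \eqref{key sequence} by counting the lengths of the columns $Q_i$. This argument also makes the appeal to Remark \ref{remarkFGR} unnecessary.
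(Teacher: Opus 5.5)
Your upper bound, and the preliminary check that $m\ge0$, are correct. The upper bound is exactly the paper's argument: Proposition \ref{Iwanaga--Gorenstein and replicated} combined with $\idim A^{(\ell-1)}=\ell+m-1$ from Theorem \ref{mainresultsec3}. The check $m\ge0$, which that theorem requires, is a point the paper passes over silently. Note that you filed this part under the heading ``Lower bound''.

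There is one genuine error, in the key step of the lower bound, though it is easily repaired. The identity $\nu^{-(\ell-1)}(DA)\simeq A[-m]$ is false. Since $DA=\nu(A)$, you have $\nu^{-(\ell-1)}(DA)=\nu^{-(\ell-2)}(A)$, so your claim amounts to $\nu^{\ell-2}(A)\simeq A[m]$. Together with $\nu^{\ell}(A)\simeq A[m]$ this would force $\nu^2(A)\simeq A$. For $\ell=2$ it literally reads $A\simeq A[-m]$, which is absurd whenever $m>0$; the non-linearly oriented $A_3$ path algebra is a counterexample with $m=1$.

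What the Calabi--Yau property actually gives is $\nu^{\ell-1}(DA)=\nu^{\ell}(A)\simeq A[m]$, i.e.\ $\nu^{-(\ell-1)}(A)\simeq DA[-m]$. With $A$ and $DA$ interchanged, your degree count goes through verbatim:
\begin{itemize}
\item $A$ sits in degree $0$.
\item Each application of $\nu^{-1}=\RHom_A(DA,-)$ keeps the bottom degree and raises the top degree by at most $\pdim (DA)_A=\idim {}_AA=g$.
\item Hence $\nu^{-(\ell-1)}(A)$ has cohomology in $[0,(\ell-1)g]$, while $DA[-m]$ has cohomology exactly in degree $m$.
\end{itemize}
This gives $m\le(\ell-1)g$. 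Dually, you can use that $\nu=-\Lotimes_A DA$ has amplitude $[-g,0]$ and apply it $\ell-1$ times to $DA$.

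Your aside that the bound ``can be read off from \eqref{key sequence} by counting the lengths of the columns $Q_i$'' is too optimistic. Counting terms of the $Q_i$ only yields $m\le \ell g$, because $Q_0$ already has length $g$ although its cohomology sits in a single degree. You must count cohomological amplitude, as above.

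Once corrected, your route differs from the paper's only for the lower bound. The paper does not prove it but cites \cite[2.10(c)]{HIMO}, stated there for finite global dimension, with the remark that the argument extends to Iwanaga--Gorenstein algebras. Your amplitude argument supplies that proof directly. It makes the proposition self-contained, and that half does not need replicated algebras at all.
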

\begin{proof}
The inequality $\frac{m}{\ell-1}\leq \idim A$ can be found in \cite[2.10(c)]{HIMO}, where a proof is given for a finite global dimension but the same argument works also in the case of Iwanaga--Gorenstein algebras.
We now show $\idim A\leq m$:
By Proposition \ref{Iwanaga--Gorenstein and replicated}, we have that $\idim A + i \leq \idim A^{(i)}$ for all $i \geq 1$.
Now we use our main result Theorem \ref{mainresultsec3} to obtain $\idim A^{(\ell-1)}=m+\ell -1$, since $A$ is assumed to be twisted $\frac{m}{\ell}$-Calabi--Yau.
This gives the inequalities 
$$\idim A+\ell-1 \leq m+ \ell -1$$
which is clearly equivalent to $\idim A \leq m $. 
\end{proof}
We obtain directly the following corollary by setting $\ell =2$ in the previous Proposition \ref{inequalitytheorem}:
\begin{corollary}
Let $A$ be a twisted $\frac{m}{2}$-Calabi--Yau algebra. Then $\idim A=m.$
\end{corollary}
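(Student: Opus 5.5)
The plan is to specialise Proposition \ref{inequalitytheorem} to $\ell=2$. That proposition gives
\[\frac{m}{\ell-1}\leq \idim A\leq m\]
for any twisted $\frac{m}{\ell}$-Calabi--Yau algebra $A$ with $\ell\geq 2$. For $\ell=2$ the lower bound is $\frac{m}{1}=m$ and the upper bound is $m$, so $\idim A=m$. All that remains is to confirm that the hypotheses of Proposition \ref{inequalitytheorem} are actually met.

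\textbf{Hypotheses.} The definition of twisted fractionally Calabi--Yau already assumes $A$ is Iwanaga--Gorenstein, so $\idim A$ is finite and both bounds make sense. The upper bound $\idim A\le m$ comes from Proposition \ref{Iwanaga--Gorenstein and replicated} combined with Theorem \ref{mainresultsec3}. That theorem assumes $m\ge0$, so this is the point that needs checking.

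\textbf{The case $m<0$.} If $m$ is allowed to be negative, I would rule it out directly. The lower bound from \cite[2.10(c)]{HIMO} gives $\idim A\ge m$ trivially in this case, so it says nothing. Instead, note that
\[\nu^2(A)\simeq A[m]\]
up to twist. Here $\nu^2(A)=\nu(DA)$, and $\nu(DA)$ is represented by a complex of injectives with cohomology concentrated in degrees $\ge0$, since $\nu$ is right exact. Hence $m\le 0$. It follows that the cases $m<0$ cannot occur. The case $m=0$ forces $A$ selfinjective, consistent with $\idim A=0$.

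\textbf{Main obstacle.} The only real step is this bookkeeping on the sign of $m$; the rest is immediate from Proposition \ref{inequalitytheorem}.
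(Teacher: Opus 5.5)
Your main line is the paper's own proof: the corollary is obtained there simply by setting $\ell=2$ in Proposition \ref{inequalitytheorem}, so that both bounds equal $m$. The paper never discusses the sign of $m$. It tacitly works with $m\ge0$, which is needed because the upper bound rests on Theorem \ref{mainresultsec3}, stated only for $m\ge0$. So your extra check is a reasonable addition, but as written it is wrong in two places. First, $\nu=-\Lotimes_A DA$ is the \emph{left} derived functor of a right exact functor, so $\nu(DA)$ is computed as $P_\bullet\otimes_A DA$ for a projective resolution $P_\bullet\to DA$ sitting in degrees $\le0$. Its cohomology $H^{-i}(\nu(DA))=\mathrm{Tor}^A_i(DA,DA)$ therefore lives in degrees $\le0$, not $\ge0$. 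Second, your step ``hence $m\le0$; it follows that the cases $m<0$ cannot occur'' does not follow even on its own terms.

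With the sign corrected, the argument works. The complex $\nu^2(A)\simeq A[m]$ has its only cohomology in degree $-m$; the twist does not change the underlying right module $A_A$. Hence $-m\le0$, i.e.\ $m\ge0$, and Proposition \ref{inequalitytheorem} applies. In fact, since $\pdim(DA)_A=\idim A$, a minimal such resolution $P_\bullet$ has length $\idim A$, so the cohomology of $\nu(DA)$ lies in degrees $[-\idim A,0]$. This gives $0\le m\le\idim A$ at once, which is the $\ell=2$ case of the lower bound, without citing \cite{HIMO}.
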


Twisted $\frac{m}{\ell}$-Calabi--Yau algebras with $\ell=1$ are exactly the selfinjective algebras. Next we will study the twisted $\frac{m}{\ell}$-Calabi--Yau algebras with $\ell=2$ and relate them to notions in higher Auslander--Reiten theory. 

 Recall that, for a finite dimensional algebra $A$ and $n\geq1$, we call $M\in\mod A$ \emph{$n$-precluster tilting} if the following conditions are satisfied:
\begin{enumerate}[$\bullet$]
\item $A\oplus DA\in\add M$,
\item $\Ext^i_A(M,M)=0$ for each $1\le i\le n-1$,
\item $\tau_n(M)\in\add M$ and $\tau_n^-(M)\in\add M$.
\end{enumerate}
 The Iyama--Solberg correspondence gives a bijective correspondence between precluster tilting modules and minimal Auslander--Gorenstein algebras, we refer to \cite{IyaSol} for details.

\begin{theorem} \label{onehalfCYcor}
Let $A$ be a finite dimensional Iwanaga--Gorenstein algebra. Then the following are equivalent for $n \geq 1$:
\begin{enumerate}[\rm(1)]
    \item $A$ is twisted $\frac{n}{2}$-Calabi--Yau.
    \item $\idim_AA=\idim A_A=n$, $A \oplus D A$ is an $n$-precluster tilting $A$-module and $\Hom_A(DA,A)=0$.
    \item The algebra 
    \[
\begin{pmatrix}
A & 0 \\
DA & A
\end{pmatrix}
\]
is minimal Auslander--Gorenstein of selfinjective dimension $n+1$.
\end{enumerate}
\end{theorem}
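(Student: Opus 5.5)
The equivalence (1)$\Leftrightarrow$(3) is the case $\ell=2$, $m=n$ of Theorem \ref{mainresultsec3}. Indeed, $A^{(1)}$ is exactly the displayed triangular matrix algebra, and $\ell+m-1=n+1$; $A$ is Iwanaga--Gorenstein by hypothesis. So the real work is to connect (2) with (3). My plan is to use the Iyama--Solberg correspondence, in the concrete form used in the proof of Proposition \ref{mainresultsec3+}. Put $B:=A^{(1)}$ and let $f$ be the idempotent corresponding to the $(1,1)$-entry. By \eqref{proj inj}, $\proj B\cap\inj B=\add[DA,A]_1$, and $[DA,A]_1=D(Bf)$ up to the obvious identification. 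We also have $fBf=A$, and $Bf$ as a right $A$-module is $A\oplus DA$ (the first column of $B$), with $\End_A(A\oplus DA)\cong B$ precisely when $\Hom_A(DA,A)=0$.

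(3)$\Rightarrow$(2). Suppose $B$ is minimal Auslander--Gorenstein of selfinjective dimension $n+1$. Then $\domdim B\ge n+1\ge 2$, so by Morita--Tachikawa $B\cong\End_{fBf}(Bf)=\End_A(A\oplus DA)$. Comparing the $(1,2)$-entries of the matrix forms, the corner $\Hom_A(DA,A)$ must vanish. The ranks agree, and this corner corresponds to maps from the $DA$ summand to the $A$ summand, which are zero in $B$. I expect to make this rigorous by noting that the natural map $B\to\End_A(Bf)$ is an isomorphism when $\domdim B\ge2$ and reading off the off-diagonal block. The Iyama--Solberg correspondence then says that $Bf=A\oplus DA$ is an $n$-precluster tilting $A$-module. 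Finally, $\idim A=n$ follows from the Corollary after Proposition \ref{inequalitytheorem}, using (1) which we already have.

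(2)$\Rightarrow$(3). Assume (2). The condition $\Hom_A(DA,A)=0$ gives $\End_A(A\oplus DA)\cong\begin{pmatrix}A&0\\ \Hom_A(A,DA)&\End(DA)\end{pmatrix}\cong A^{(1)}$, since $\Hom_A(A,DA)=DA$ and $\End(DA)=A$. The Iyama--Solberg correspondence applied to the $n$-precluster tilting module $A\oplus DA$ then yields that $A^{(1)}$ is minimal Auslander--Gorenstein with $\idim\le n+1\le\domdim$.

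The main obstacle is showing equality $\idim A^{(1)}=n+1$ rather than just $\le n+1$. Here I would use Proposition \ref{Iwanaga--Gorenstein and replicated}: $\idim A^{(1)}\ge\idim A+1=n+1$, which is exactly where the hypothesis $\idim A=n$ enters. A secondary point to double-check is that the precise Iyama--Solberg statement (the dominant-dimension convention and the index shift $n$ versus $n+1$) matches the one used here.
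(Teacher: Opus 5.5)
Your proof is correct, but it takes a different route from the paper. Both of you get (1)$\Leftrightarrow$(3) as the case $\ell=2$ of Theorem \ref{mainresultsec3}. The difference lies in the remaining link.

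The paper proves (1)$\Leftrightarrow$(2) intrinsically in $\Db(\mod A)$:
\begin{itemize}
\item twisted $\frac{n}{2}$-Calabi--Yau means $DA\Lotimes_A DA\simeq A[n]$;
\item equivalently $\RHom_A(DA,A)\simeq DA[-n]$, i.e.\ $\Ext^i_A(DA,A)=0$ for $i\neq n$ and $\Ext^n_A(DA,A)\simeq DA$;
\item this is rewritten as $\idim A=n$, $\Ext^i_A(DA,A)=0$ for $0\le i\le n-1$, and $\tau_n^-(A)\simeq DA$;
\item Lemma \ref{ADA pCT}(1)$\Leftrightarrow$(3) converts this into (2); it applies because $\Hom_A(DA,A)=0$ forces $\proj A\cap\inj A=\{0\}$.
\end{itemize}

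You instead prove (2)$\Leftrightarrow$(3) on the duplicated algebra. The natural map $B\to\End_A(Bf)=\End_A(A\oplus DA)$ is injective with cokernel $\Hom_A(DA,A)$. So $\domdim B\ge 2$ (Morita--Tachikawa) forces $\Hom_A(DA,A)=0$. Conversely, that vanishing identifies $\End_A(A\oplus DA)$ with $A^{(1)}$. The Iyama--Solberg bijection then does the rest, and $\idim A^{(1)}\ge\idim A+1$ from Proposition \ref{Iwanaga--Gorenstein and replicated} pins the selfinjective dimension at $n+1$. This is essentially the $\ell=2$, Iwanaga--Gorenstein analogue of the proof of Proposition \ref{mainresultsec3+}. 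It explains nicely why $\Hom_A(DA,A)=0$ is exactly the condition making $\End_A(A\oplus DA)$ the duplicated algebra.

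The price is heavier input: the Iyama--Solberg correspondence, the main theorem itself, and, for $\idim A=n$ in (3)$\Rightarrow$(2), the corollary of Proposition \ref{inequalitytheorem}. That corollary's lower bound rests on the argument cited from \cite{HIMO}. You could avoid this dependence. Proposition \ref{Iwanaga--Gorenstein and replicated} already gives $\idim A\le n$. If $\idim A<n$, then $\Ext^i_A(DA,A)=0$ for all $i\ge0$, so $\RHom_A(DA,A)=0$. That is impossible, since $DA$ generates $\Kb(\proj A)$ as a thick subcategory.

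The paper's route is self-contained at the level of $A$ and independent of Theorem \ref{mainresultsec3}. It also shows where each condition in (2) comes from:
\begin{itemize}
\item $\Hom_A(DA,A)=0$ is the degree-$0$ part of the vanishing;
\item $\idim A=n$ is the top degree;
\item $\tau_n^-(A)\simeq DA$ is the identification $\Ext^n_A(DA,A)\simeq DA$.
\end{itemize}
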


To prove this, we need the following standard fact for $n$-precluster tilting modules.

\begin{lemma}\label{ADA pCT}
Let $A$ be a finite dimensional algebra. Then the following conditions are equivalent.
\begin{enumerate}[\rm(1)]
\item $A\oplus DA$ is an $n$-precluster tilting $A$-module.
\item $\Ext^i_A(DA,A)=0$ for each $1\le i\le n-1$ and $\tau_n^-(A)\in\inj A$.
\end{enumerate}
If $A$ is basic and satisfies $\proj A\cap\inj A=\{0\}$, then the following condition is also equivalent.
\begin{enumerate}[\rm(1)]\setcounter{enumi}{2}
\item $\Ext^i_A(DA,A)=0$ for each $1\le i\le n-1$ and $\tau_n^-(A)\simeq DA$.
\end{enumerate}
\end{lemma}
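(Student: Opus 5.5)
We prove Lemma \ref{ADA pCT} by unwinding the definition of $n$-precluster tilting for $M:=A\oplus DA$. The first condition, $A\oplus DA\in\add M$, holds trivially. For the vanishing $\Ext^i_A(M,M)=0$ with $1\le i\le n-1$, we split $M$ into its four pieces. We have $\Ext^i_A(A,-)=0$ and $\Ext^i_A(-,DA)=0$ for $i\ge1$. Hence the only surviving piece is $\Ext^i_A(DA,A)$, and the second condition is equivalent to $\Ext^i_A(DA,A)=0$ for $1\le i\le n-1$.

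Next we handle the third condition, concerning $\tau_n$ and $\tau_n^-$. Recall $\tau_n=\tau\Omega^{n-1}$ and $\tau_n^-=\tau^-\Omega^{-(n-1)}$. Both vanish on projective summands: $\tau_n(A)=0$, and similarly $\tau_n^-(DA)=0$. So the condition reduces to $\tau_n(DA)\in\add M$ and $\tau_n^-(A)\in\add M$. The key step is to show that, given the Ext-vanishing, $\tau_n(DA)\in\add M$ is equivalent to $\tau_n^-(A)\in\add M$, and that each is equivalent to $\tau_n^-(A)\in\inj A$.

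For this we use the standard fact that, under $\Ext^i_A(DA,A)=0$ for $1\le i\le n-1$, the functors $\tau_n$ and $\tau_n^-$ restrict to mutually quasi-inverse bijections. The bijection goes between indecomposable non-projective $X$ with $\Ext^i(X,A)=0$ for $1\le i\le n-1$ and indecomposable non-injective $Y$ with $\Ext^i(DA,Y)=0$ for $1\le i\le n-1$ (Iyama's $n$-Auslander--Reiten translation theory). Concretely, $\tau_n^-(A)$ has no projective summands, since it is of the form $\tau^-(-)$. Its non-injective summands would come from indecomposable non-injective summands $P$ of $A$, and these are sent to non-projective modules. A summand of $\tau_n^-(A)$ lying in $\add(A\oplus DA)$ and non-projective must therefore be injective. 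This gives the equivalence of $\tau_n^-(A)\in\add M$ with $\tau_n^-(A)\in\inj A$.

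Symmetrically, $\tau_n(DA)\in\add M$ if and only if $\tau_n(DA)\in\proj A$. The bijection then matches these two conditions: if $\tau_n^-(P)=I$ is injective, then $\tau_n(I)\simeq P$ is projective, and conversely, for the non-projective/non-injective summands. This proves (1)$\Leftrightarrow$(2). I expect the main obstacle to be justifying this use of the bijection, in particular checking that $\tau_n\tau_n^-(P)\simeq P$ needs exactly the Ext-vanishing $\Ext^i(DA,A)=0$. I would cite \cite{I1} or \cite{IyaSol} for this.

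For (2)$\Leftrightarrow$(3), assume $A$ is basic with $\proj A\cap\inj A=\{0\}$. Then no indecomposable projective is injective, so every indecomposable summand of $A$ is non-injective. By the bijection, $\tau_n^-$ sends the indecomposable summands of $A$ injectively to pairwise non-isomorphic indecomposable non-projective modules. When $\tau_n^-(A)\in\inj A$, these images are indecomposable injectives, and there are $|A|$ of them. Counting, since $A$ and $DA$ have the same number of indecomposable summands, we get $\tau_n^-(A)\simeq DA$. The converse direction is trivial.
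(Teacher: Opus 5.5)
Your reduction is the right one, and it is presumably what the paper means by ``routine, see \cite[Chapter 3]{IyaSol}''. For $M=A\oplus DA$, condition (1) becomes three requirements: $\Ext^i_A(DA,A)=0$ for $1\le i\le n-1$, $\tau_n^-(A)\in\add M$, and $\tau_n(DA)\in\add M$. The requirement $\tau_n^-(A)\in\add M$ is equivalent to $\tau_n^-(A)\in\inj A$ simply because $\tau^-$ never produces projective summands, so (1)$\Rightarrow$(2) is immediate. Your counting argument for (2)$\Leftrightarrow$(3) is also correct.

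The gap is in (2)$\Rightarrow$(1), at the one non-formal step: deducing $\tau_n(DA)\in\proj A$ from $\tau_n^-(A)\in\inj A$. Your sentence ``if $\tau_n^-(P)=I$ is injective, then $\tau_n(I)\simeq P$ is projective'' only controls $\tau_n(I)$ for those indecomposable injectives $I$ that actually occur as summands of $\tau_n^-(A)$. But $\tau_n(DA)$ involves every non-projective indecomposable injective. The bijection by itself only identifies $\{P : \tau_n^-(P)\ \text{injective}\}$ with $\{I : \tau_n(I)\ \text{projective}\}$. It does not show that the first set containing all non-injective indecomposable projectives forces the second to contain all non-projective indecomposable injectives.

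What is missing is surjectivity, and it comes from the same counting you use later, now without assuming $\proj A\cap\inj A=\{0\}$. Since $\Ext^i_A(DA,A)=0$ for $1\le i\le n-1$, Iyama's equivalence \cite{I1} shows that $\tau_n^-$ sends pairwise non-isomorphic indecomposable non-injective projectives to pairwise non-isomorphic indecomposable non-projective modules, and under (2) these are injective. Both sets have cardinality equal to the number of isomorphism classes of simple modules minus the number of isomorphism classes of indecomposable projective-injective modules. Hence every non-projective indecomposable injective $I$ is $\tau_n^-(P)$ for some $P$, and so $\tau_n(I)\simeq\tau_n\tau_n^-(P)\simeq P$. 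Since $\tau_n$ kills the projective-injective summands of $DA$, this gives $\tau_n(DA)\in\proj A\subseteq\add M$. With this step inserted, your proof is complete.
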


\begin{proof}
This is routine, see \cite[Chapter 3]{IyaSol}.
\end{proof}

Now we are ready to prove Theorem \ref{onehalfCYcor}.

\begin{proof}[Proof of Theorem \ref{onehalfCYcor}]
(1)$\Leftrightarrow$(3): Combine Proposition \ref{CY=>AG} and Proposition \ref{AG=>CY} for $\ell=2$. 

(1)$\Leftrightarrow$(2): Without loss of generality, assume that $A$ is basic. Then the claim follows from the following chain of equivalences.
\begin{align*}
(1)\Longleftrightarrow&\  \mbox{$DA\Lotimes_ADA\simeq A[n]$ in $\Db(A)$}\\
\Longleftrightarrow&\ \mbox{$\RHom_A(DA,A)\simeq DA[-n]$ in $\Db(A)$,}\\
\Longleftrightarrow&\ \mbox{$\Ext^i_A(DA,A)$ is zero for each $i\in\Z\setminus\{n\}$, and $DA$ for $i=n$,}\\
\Longleftrightarrow&\ \mbox{$\idim A=n$, $\Ext^i_A(DA,A)=0$ for each $0\le i\le n-1$, and $\tau_n^-(A)\simeq DA$,}\\
\Longleftrightarrow&\ (2)
\end{align*}
Here, the last equivalence follows from Lemma \ref{ADA pCT}(1)$\Leftrightarrow$(3) since the condition $\Hom_A(DA,A)=0$ implies $\proj A\cap\inj A=\{0\}$.
\end{proof}

Recall that an $n$-cluster tilting $A$-module $M$ is called \emph{$\ell$-homogeneous} with $\ell\ge1$ if each $\tau_n$-orbit of indecomposable direct summands of $M$ has cardinality $\ell$. We obtain the following result, where the equivalence (1)$\Leftrightarrow$(2) improves \cite[Theorem 1.3]{HI} for the case $\ell=2$. 

\begin{corollary} \label{nhalfcorollaryfingldim}
Let $A$ be a finite dimensional algebra of finite global dimension.  Then the following are equivalent for $n \geq 1$:
\begin{enumerate}[\rm(1)]
    \item $A$ is twisted $\frac{n}{2}$-Calabi--Yau.
    \item $\gldim A=n$, and $A \oplus D A$ is a $2$-homogeneous $n$-cluster tilting $A$-module.
    \item The duplicated algebra 
    \[
\begin{pmatrix}
A & 0 \\
DA & A
\end{pmatrix}
\]
is higher Auslander algebra of global dimension $n+1$.
\end{enumerate}
\end{corollary}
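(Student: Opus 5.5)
The plan is to deduce Corollary \ref{nhalfcorollaryfingldim} from Theorem \ref{onehalfCYcor}, specialised to finite global dimension, together with the finite-global-dimension part of Theorem \ref{mainresultsec3}. Since $\gldim A<\infty$, the algebra $A$ is Iwanaga--Gorenstein with $\idim A=\gldim A$, so Theorem \ref{onehalfCYcor} applies directly.

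\emph{(1)$\Leftrightarrow$(3).} This is the case $\ell=2$, $m=n$ of the last part of Theorem \ref{mainresultsec3}. There, finite global dimension of $A$ is equivalent to finite global dimension of $A^{(1)}$ by \cite[Proposition 1.1]{AI}. Hence ``minimal Auslander--Gorenstein of selfinjective dimension $n+1$'' becomes ``higher Auslander of global dimension $n+1$''.

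\emph{(1)$\Rightarrow$(2).} By Theorem \ref{onehalfCYcor}(1)$\Rightarrow$(2), we have $\gldim A=\idim A=n$, $M:=A\oplus DA$ is $n$-precluster tilting, and $\Hom_A(DA,A)=0$. Since $\gldim A\le n$, I would use the standard fact (Iyama) that, for algebras of global dimension at most $n$, an $n$-precluster tilting module containing $A\oplus DA$ is $n$-cluster tilting. Alternatively, (3) together with Proposition \ref{mainresultsec3+}-style reasoning gives this: $A^{(1)}$ is an $n$-Auslander algebra with $\proj\cap\inj=\add[DA,A]_1$, and the higher Auslander correspondence gives that $A^{(1)}f$, i.e. $A\oplus DA$ over $fA^{(1)}f=A$, is $n$-cluster tilting.

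For $2$-homogeneity, $\Hom_A(DA,A)=0$ gives $\proj A\cap\inj A=0$ for basic $A$. Lemma \ref{ADA pCT}(3) gives $\tau_n^-(A)\simeq DA$, and hence $\tau_n(DA)\simeq A$ and $\tau_n(A)=0$. So each $\tau_n$-orbit of an indecomposable summand of $M$ is $\{P,\tau_n^-P\}$ with $P$ projective non-injective, which has exactly two elements.

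\emph{(2)$\Rightarrow$(1).} Assume $\gldim A=n$ and that $A\oplus DA$ is $2$-homogeneous $n$-cluster tilting. Cluster tilting implies precluster tilting. It remains to get $\Hom_A(DA,A)=0$ and then apply Theorem \ref{onehalfCYcor}(2)$\Rightarrow$(1).

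For $\Hom_A(DA,A)=0$, I would use homogeneity. A projective-injective indecomposable $P$ satisfies $\tau_n P=0=\tau_n^-P$, so its orbit has size $1$; thus $\proj A\cap\inj A=0$. Each orbit then has the form $\{P,\tau_n^-P\}$ with $\tau_n^-P$ injective (orbits in $n$-representation-finite algebras run from projectives to injectives), and $\tau_n^-$ gives a bijection from indecomposable projectives to indecomposable injectives. This yields $\tau_n^-(A)\simeq DA$, i.e.\ $\nu_n^{-1}(A)\simeq DA$ as modules, where $\nu_n=\nu\circ[-n]$.

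Therefore $\nu^{-1}(A)[n]\simeq DA=\nu(A)$, which gives $\nu^2(A)\simeq A[n]$ up to twist. So $A$ is twisted $\frac n2$-Calabi--Yau directly, without needing $\Hom_A(DA,A)=0$ separately. This uses $\tau_n^-=H^0(\nu^{-1}[n]-)$ together with vanishing of the other cohomologies on $n$-cluster tilting summands when $\gldim\le n$ (Iyama's result).

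The main obstacle is the (2)$\Rightarrow$(1) step: passing from the module isomorphism $\tau_n^-A\simeq DA$ to the derived isomorphism, and checking the twist is compatible as a bimodule. For this I would invoke \cite[Theorem 1.1]{HI} or \cite{HI}'s homogeneous criterion rather than redo it.
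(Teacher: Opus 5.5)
Your proposal is correct and follows essentially the route the paper intends. (1)$\Leftrightarrow$(3) is the $\ell=2$ case of the finite-global-dimension part of Theorem \ref{mainresultsec3}; (1)$\Rightarrow$(2) comes from Theorem \ref{onehalfCYcor} and Proposition \ref{mainresultsec3+}, with $\idim A=n$ and $\tau_n^-(A)\simeq DA$ giving homogeneity; and (2)$\Rightarrow$(1) is the $\ell=2$ case of \cite[Theorem 1.3]{HI}, which your $\nu_n^{-1}=\tau_n^-$ argument reproves. Your claim to avoid $\Hom_A(DA,A)=0$ is misleading, though: $H^{-n}(\nu^{-1}(A)[n])=\Hom_A(DA,A)$, so Iyama's identification of $\nu_n^{-1}$ with $\tau_n^-$ on non-injective objects of the cluster tilting subcategory is exactly what supplies that vanishing.
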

The special case when a duplicated algebra is a classical Auslander algebra was already considered and fully solved by Assem and Iwanaga in \cite[Remark 1.4]{AI}.

We remark that the class of twisted $\frac{n}{2}$-Calabi--Yau algebras over a perfect field is closed under tensor products by \cite[Proposition 1.4]{HI}. We call a twisted $\frac{m}{\ell}$-Calabi--Yau algebra \emph{prime} if it is not the non-trivial tensor product of two other twisted fractionally Calabi--Yau algebras (note that again by \cite[Proposition 1.4]{HI}, fractionally Calabi--Yau algebras are closed under tensor products over perfect fields). 
We pose the following problem:
\begin{problem}
Classify prime twisted $\frac{m}{2}$-Calabi--Yau algebras of finite global dimension.
\end{problem}

\section{Examples}
In this section we will give some examples and give an outlook.

The next example shows how to use our result \ref{mainresult} to generate large new classes of higher Auslander algebras using twisted fractionally Calabi--Yau algebras.

\begin{example}
Let $Q$ be a quiver of Dynkin type. Then any finite dimensional algebra $A$ that is derived equivalent to $KQ$ is fractionally Calabi--Yau as the fractionally Calabi--Yau property is invariant under derived equivalences. 
For example any algebra of the form $A=KQ/\rad^2(KQ)$ is derived equivalent to $KQ$, see for example \cite[Theorem 1]{AMT}. 
By our main result, infinitely many of the replicated algebras $A^{(m)}$ are higher Auslander algebras. Thus from a given radical square zero algebra with Dynkin quiver, our main result gives us infinitely many higher Auslander algebras and corresponding cluster tilting modules.
The explicit quiver and relations of the replicated algebras coming from radical square zero algebras can be calculated using the recipe from \cite{S}. We give the following explicit example:
Let $A=KQ/\rad^2(KQ)$ with $Q$ the Dynkin quiver of type $D_4$:
\[\begin{tikzcd}
	& 3 \\
	1 & 2 & 4
	\arrow[from=2-1, to=2-2]
	\arrow[from=2-2, to=1-2]
	\arrow[from=2-2, to=2-3]
\end{tikzcd}\]

$A^{(2)}=KQ_2/I$ is given by the quiver 
$Q_2=$
\[\begin{tikzcd}
	1 && 5 && 9 \\
	& 2 & 3 & 6 & 7 & 10 & 12 \\
	&& 4 && 8 && 11
	\arrow["a", from=1-1, to=2-2]
	\arrow["g", from=1-3, to=2-4]
	\arrow["m", from=1-5, to=2-6]
	\arrow["d", color={magenta}, from=2-2, to=1-3]
	\arrow["b", from=2-2, to=2-3]
	\arrow["c", from=2-2, to=3-3]
	\arrow["e", color={magenta}, from=2-3, to=2-4]
	\arrow["j", color={magenta}, from=2-4, to=1-5]
	\arrow["h", from=2-4, to=2-5]
	\arrow["i", from=2-4, to=3-5]
	\arrow["k", color={magenta}, from=2-5, to=2-6]
	\arrow["o", from=2-6, to=2-7]
	\arrow["n"', from=2-6, to=3-7]
	\arrow["f"', color={magenta}, from=3-3, to=2-4]
	\arrow["l"', color={magenta}, from=3-5, to=2-6]
\end{tikzcd}\]
with relations 
$$I=\langle ab,ac,gh,gi,{\color{magenta}e}i,{\color{magenta}f}h,mn,mo,{\color{magenta}k}o,{\color{magenta}l}n, e{\color{magenta}j}, {\color{magenta}fj},a{\color{magenta}d}g,b{\color{magenta}e}h, c{\color{magenta}f}i,g{\color{magenta}j}m,h{\color{magenta}k}n,i{\color{magenta}l}o, {\color{magenta}d}g-b{\color{magenta}e},b{\color{magenta}e}-c{\color{magenta}f},{\color{magenta}j}m-h{\color{magenta}k},h{\color{magenta}k}-i{\color{magenta}l} \rangle .$$

$A^{(2)}$ is a higher Auslander algebra with dominant and global dimension equal to 4.
\end{example}
In forthcoming work we plan to implement our main result into QPA in order to test whether a given algebra is twisted $\frac{m}{\ell}$-Calabi--Yau and use this for new experiments. We remark that the main result of \cite{CDIM} allows us to test whether an algebra $A$ of finite global dimension is $\frac{m}{\ell}$-Calabi--Yau using the trivial extension algebra of $A$ in case $A$ has finite global dimension, but in the infinite global dimension the results \cite{CDIM} do not apply. The classification of twisted fractionally Calabi--Yau algebras is wide open even for special classes of algebras. For example even for linear Nakayama algebras, the classification of the twisted fractionally Calabi--Yau property is wide open.

\begin{example}
The condition (2) in Theorem \ref{onehalfCYcor} means in the case $n=1$ simply that $\idim A=1$, $\Hom_A(D(A),A)=0$ and $\tau DA$ is projective. An explicit example of an Iwanaga--Gorenstein algebra $A$ with $\idim A=1$ (and infinite global dimension) satisfying $\Hom_A(DA,A)=0$ and $\tau DA$ being projective is given by $A=KQ/I$ with $Q=$
\[\begin{tikzcd}
	1 & 2
	\arrow["a", from=1-1, to=1-1, loop, in=55, out=125, distance=10mm]
	\arrow["b", from=1-1, to=1-2]
\end{tikzcd}\]
and $I=\langle a^2 \rangle$.
$A^{(1)}=KQ_2/I_2$ is given by the following quiver $Q_2$:
\[\begin{tikzcd}
	3 &&& 4 \\
	\\
	2 &&& 1
	\arrow["{a_3}", from=1-1, to=1-1, loop, in=55, out=125, distance=10mm]
	\arrow["{a_2}"', from=1-1, to=3-1]
	\arrow["{a_5}", from=1-4, to=1-4, loop, in=55, out=125, distance=10mm]
	\arrow["{a_1}", from=1-4, to=3-4]
	\arrow["{a_4}", from=3-1, to=1-4]
\end{tikzcd}\]

with relations $I_2=\langle a_4 a_1, a_3^2,a_5^2,a_2 a_4 a_5-a_3 a_2 a_4 \rangle.$ $A^{(1)}$ is minimal Auslander--Gorenstein with $\idim A^{(1)}=\domdim A^{(1)}=2$.
Thus, $A$ is twisted $\frac{1}{2}$-Calabi--Yau by Theorem \ref{onehalfCYcor}.
\end{example}
It would be interesting to find a general classification of Iwanaga--Gorenstein algebras that are twisted $\frac{1}{2}$-Calabi--Yau. Another example of a twisted $\frac{1}{2}$-Calabi--Yau (but with finite global dimension) is the path algebra $KQ$, where $Q$ is non-linear oriented of Dynkin type $A_3$.
In fact this is the only example of a ring-indecomposable $K$-algebra over an algebraically closed field $K$ in the finite global dimension case, see \cite[Remark 1.4]{AI}.

We pose the following problem:
\begin{problem}
Let $A$ be a fractionally Calabi-Yau algebra of finite global dimension. Define 
$$d_A:= \inf \{ d \geq 1 \mid A \cong \underline{\End}_B(M), \ \text{where $M$ is $d$-cluster tilting and $\gldim B=d$} \}.$$
Compute $d_A$ explicitly in terms of homological information on $A$ and/or find explicit upper bounds.
\end{problem}
In our article we showed in Theorem \ref{stableendomainresult} that $d_A \leq m + \ell-2$ when $A$ is $\frac{m}{\ell}$-Calabi-Yau of finite global dimension. However, this bound is not optimal in general as the next example shows:
\begin{example}
Let $A=KQ$ with $Q$ of linear oriented Dynkin type $A_3$. Then $A$ is $\frac{2}{4}$-Calabi-Yau. We have $m + \ell-2=4$ and $d_A=2$, since $A$ is isomorphic to $\underline{\End}_B(M)$, where $B=KQ/I$ is the 2-representation-finite algebra with quiver
\[Q=[\begin{tikzcd}
	1 & 2 & 3 & 4 & 5
	\arrow["{a_1}", from=1-1, to=1-2]
	\arrow["{a_2}", from=1-2, to=1-3]
	\arrow["{a_3}", from=1-3, to=1-4]
	\arrow["{a_4}", from=1-4, to=1-5]
\end{tikzcd}]\]
and relations $I=\langle a_1 a_2 a_3 a_4 \rangle$ and $M$ is the 2-cluster tilting module $M=D(A) \oplus \tau_2(D(A))$.
\end{example}





\begin{thebibliography}{Gus}

\bibitem[A1]{Ami}
Amiot, C.: {\it Cluster categories for algebras of global dimension 2 and quivers with potential.}
Ann. Inst. Fourier (Grenoble) 59 (2009), no. 6, 2525--2590.

\bibitem[A2]{Ami2} Amiot, C.: {\it On generalized cluster categories.}
In: Skowro\'nski, A., Yamagata, K. (eds.) {\it Representations of Algebras and Related Topics}
(EMS Ser. Congr. Rep.), Eur. Math. Soc., Z\"urich, pp.~1--53 (2011).

\bibitem[AMT]{AMT} Aquino, R. M.; Marcos, E. N.; Trepode, S.: {\it  On the existence of a derived equivalence between a Koszul algebra and its Yoneda algebra.}  J. Algebra Appl. 13, No. 4, (2014). 

\bibitem[ABST]{ABST} Assem, I.; Br\"{u}stle, T.; Schiffler, R.; Todorov, G.:
\emph{{$m$}-cluster categories and {$m$}-replicated algebras}.
J. Pure Appl. Algebra 212 (2008), 884--901.

\bibitem[ABST2]{ABST2} Assem, I.; Br\"{u}stle, T.; Schiffler, R.; Todorov, G.: {\it Cluster categories and duplicated algebras.} J. Algebra 305, No. 1, 548-561 (2006).


\bibitem[AI]{AI} Assem, I.; Iwanaga, Y.: {\it On a class of representation-finite QF-3 algebras.} Tsukuba J. Math. 11, 199-217 (1987). 








\bibitem[ARS]{ARS} Auslander, M.; Reiten, I.; Smalo, S.: {\it Representation Theory of Artin Algebras} Cambridge Studies in Advanced Mathematics, 36. Cambridge University Press, Cambridge, 1997. xiv+425 pp.

\bibitem[AR1]{AR1}
Auslander, M.; Reiten, I.:{\it Stable equivalence of dualizing $R$-varieties.}
Adv. Math. 12 (1974), 306--366.

\bibitem[AR2]{AR2}
Auslander, M.; Reiten, I.:{\it Stable equivalence of dualizing $R$-varieties. II. Hereditary dualizing $R$-varieties.}
Adv. Math. 17 (1975), no. 2, 93--121.

\bibitem[AR3]{AR3}
Auslander, M.; Reiten, I.:{\it Stable equivalence of dualizing $R$-varieties. III. Dualizing $R$-varieties stably equivalent to hereditary dualizing $R$-varieties.}
Adv. Math. 17 (1975), no. 2, 122--142.

\bibitem[AR4]{AR4}
Auslander, M.; Reiten, I.:{\it Stable equivalence of dualizing $R$-varieties. IV. Higher global dimension.}
Adv. Math. 17 (1975), no. 2, 143--166.

\bibitem[AR5]{AR5}
Auslander, M.; Reiten, I.:{\it Stable equivalence of dualizing $R$-varieties. V. Artin algebras stably equivalent to hereditary algebras.}
Adv. Math. 17 (1975), no. 2, 167--195.

\bibitem[BS]{BS} Berggren, J.; Serhiyenko, K.: {\it Classical tilting and $\tau$-tilting theory via duplicated algebras.} https://arxiv.org/abs/2512.13893.

\bibitem[BMRRT]{BMRRT}
Buan, A.~B., Marsh, R.~J., Reineke, M., Reiten, I., and Todorov, G.:
\emph{Tilting theory and cluster combinatorics},
Adv.\ Math.\ \textbf{204} (2006), no.~2, 572--618.

\bibitem[BMR]{BMR}
Buan, A.~B., Marsh, R.~J., and Reiten, I.:
\emph{Cluster-tilted algebras},
Trans.\ Amer.\ Math.\ Soc.\ \textbf{359} (2007), no.~1, 323--332.

\bibitem[CIM]{CIM} Chan, A.; Iyama, O.; Marczinzik, R.: {\it Auslander--Gorenstein algebras from Serre-formal algebras via replication.} Adv. Math. 345, 222-262 (2019).

\bibitem[CDIM]{CDIM} Chan, A.; Darp\"o, E.; Iyama, O.; Marczinzik, R.:{\it Periodic trivial extension algebras and fractionally Calabi--Yau algebras.} Ann. Sci. \'Ec. Norm. Sup\'er. (4) 58 (2025), no.~2, 463--510.

\bibitem[Cha]{Chap} Chapoton, F.: {\it Posets and fractional Calabi--Yau categories. } Math. Res. Rep. (Amst.) 6, 1-16 (2025).


\bibitem[CC]{CC} Caorsi, M.; Cecotti, S.:{\it Homological classification of $4d \mathcal{N}$= 2 QFT. Rank-1 revisited},  J. High Energ. Phys. 2019, 13 (2019).

\bibitem[Che]{Che} Chen, X.: {\it Gorenstein Homological Algebra of Artin Algebras.} https://arxiv.org/abs/1712.04587.

\bibitem[CM]{CM} Cruz, T.; Marczinzik, R.: {\it An Auslander--Buchsbaum formula for higher Auslander algebras and applications.} https://arxiv.org/abs/2502.08422.

\bibitem[DI]{DI} Darp\"o, E.; Iyama, O.: {\it $d$-representation-finite self-injective algebras.} Adv. Math. 362 (2020), 106932.

\bibitem[DJL]{DJL} Dyckerhoff, T.; Jasso, G.; Lekili, Y.: {\it The symplectic geometry of higher Auslander algebras: symmetric products of disks. }
Forum Math. Sigma 9, Paper No. e10, 49 p. (2021). 

\bibitem[FK]{FK} Favero, D.;Kelly, T. L.:{\it Fractional Calabi--Yau categories from Landau--Ginzburg models.} Algebraic Geometry , 596-649, 2018.

\bibitem[FGR]{FGR} Fossum, R.; Griffith, P.; Reiten, I.: {\it Trivial extensions of abelian categories. Homological algebra of trivial extensions of abelian categories with applications to ring theory.} Lecture Notes in Mathematics, Vol. 456. Springer Verlag, Berlin-New York, 1975


\bibitem[G]{G} Gottesman, T: {\it Fractionally Calabi--Yau lattices that tilt to higher Auslander algebras of type A.} https://arxiv.org/abs/2406.09148.

\bibitem[H]{H} Happel, D.: {\it Triangulated categories in the representation theory of finite dimensional algebras.} London Mathematical Society Lecture Note Series 119, 1988. 

\bibitem[HI]{HI} Herschend, M.; Iyama, O.: {\it $n$-representation-finite algebras and twisted fractionally Calabi--Yau algebras.} Bull. Lond. Math. Soc. 43 (2011), no.~3, 449--466.

\bibitem[HIMO]{HIMO} Herschend, M.;Iyama, O.;Minamoto, H.;Oppermann, S.: \emph{Representation theory of Geigle--Lenzing complete intersections}, Mem. Amer. Math. Soc.~285 (2023), no.~1412, vii+141 pp.


\bibitem[I1]{I1} Iyama, O.: {\it Higher-dimensional Auslander--Reiten theory on maximal orthogonal subcategories.} Adv. Math. 210 (2007), no. 1, 22--50.

\bibitem[I2]{I2} Iyama, O.: {\it Cluster tilting for higher Auslander algebras.} Advances in Mathematics Volume 226, Issue 1, 15 January 2011, Pages 1-61.


\bibitem[IO]{IO} Iyama, O.; Oppermann, S.: {\it Stable categories of higher preprojective algebras.} Advances in Mathematics
Volume 244, 10 September 2013, Pages 23-68.

\bibitem[IS]{IyaSol} Iyama, O; Solberg, {{\O}}.: {\it Auslander--Gorenstein algebras and precluster tilting.} Advances in Mathematics Volume 326, 21 February 2018, Pages 200-240.

\bibitem[IY]{IY}
Iyama, O. and Yoshino, Y.:
\emph{Mutation in triangulated categories and rigid {C}ohen--{M}acaulay modules},
Invent.\ Math.\ \textbf{172} (2008), no.~1, 117--168.

\bibitem[J]{J} Jasso, G.: {\it n-abelian and n-exact categories.} Math. Z., Volume 283 (2016), Issue 3-4, 703-759.

\bibitem[Ke1]{Ke1} Keller, B.: \emph{Calabi--Yau triangulated categories.}  EMS Series of Congress Reports, 467-489 (2008). 


\bibitem[Ke2]{Ke2} Keller, B.: \emph{The periodicity conjecture for pairs of Dynkin diagrams}, Ann. of Math. (2) 177 (2013), no. 1, 111--170.

\bibitem[Ke3]{Ke3} Keller, B.: {\it On triangulated orbit categories.} 
Doc. Math. 10, 551-581 (2005).

\bibitem[Kl]{Kle} Kleinau, M. with an appendix by R. Marczinzik: \emph{Cambrian lattices are fractionally Calabi--Yau via 2-cluster
combinatorics.} https://arxiv.org/abs/2603.23354

\bibitem[Ko]{Kon} Kontsevich, M.: {\it
Cours donn\'e \`a l'\`ecole normale sup\'erieure.}
\url{https://www.lamfa.u-picardie.fr/marin/docs/coursKont.pdf}, 1998.

\bibitem[Kr]{Kr} Krause, H.: {\it Homological theory of representations.} Cambridge Studies in Advanced Mathematics 195.  Cambridge University Press. xxxiv, 482 p. (2022).


\bibitem[KLM1]{KLM1} Kussin, D.;Lenzing, H.; Meltzer, H.: {\it Triangle singularities, ADE-chains, and weighted projective lines.} Advances in Mathematics, Volume 237, Pages 194-251, 2013.

\bibitem[KLM2]{KLM2}
Kussin, D.; Lenzing, H.; Meltzer, H.:
{\it Nilpotent operators and weighted projective lines.}
Journal f\"ur die reine und angewandte Mathematik (Crelle's Journal),
Volume 685, Pages 33--71, 2013.

\bibitem[Ku]{K}  Kuznetsov, A.: \emph{Calabi--Yau and fractional Calabi--Yau categories.} Journal f{\"u}r die reine und angewandte Mathematik, Volume 2019, Issue 753.



\bibitem[MY]{MY} Miyachi, J.;Yekutieli, A.: {\it Derived Picard groups of finite-dimensional hereditary algebras.} Compositio Math.
129 (2001), no. 3, 341-368.

\bibitem[OT]{OT} Oppermann, S.; Thomas, H.: {\it Higher-dimensional cluster combinatorics and representation theory.} J. Eur. Math. Soc. (JEMS) 14, No. 6, 1679-1737 (2012). 

\bibitem[PR]{PR} Palmer, I.; Roos, J.:{\it Explicit formulae for the global homological dimensions
of trivial extensions of rings.} J. Algebra 27 (1973), 380-413.



\bibitem[QPA]{QPA} The QPA-team, QPA - Quivers, path algebras and representations - a GAP package, Version 1.33; 2022 \url{https://folk.ntnu.no/oyvinso/QPA/}.

\bibitem[Re]{Re}
Reiten, I.:{\it Stable equivalence of dualizing $R$-varieties. VI. Nakayama dualizing $R$-varieties.}
Adv. Math. 17 (1975), no. 2, 196--211.

\bibitem[Ro]{R} Rognerud, B.:
 {\it The bounded derived categories of the Tamari lattices are fractionally Calabi--Yau.} Adv. Math. 389,  31 p. (2021).



\bibitem[Sc]{S} Schr\"oer, J.: {\it On the quiver with relations of a repetitive algebra.}  Arch. Math. 72, No. 6, 426-432 (1999). 

\bibitem[Se]{Se} Sen, E.: {\it Higher Auslander Algebras arising from Dynkin Quivers and n-Representation Finite Algebras.} arXiv:2307.13262

\bibitem[We]{W} Weng, W.: {\it A recollement approach to Brieskorn--Pham singularities.} https://arxiv.org/abs/2512.09692.

\bibitem[Wi]{Wi} Williams, N.: {\it The two higher Stasheff--Tamari orders are equal.} J. Eur. Math. Soc. (2024) DOI 10.4171/JEMS/1497

\bibitem[X]{X} Xing, W.: {\it Replicated algebras derived equivalent to higher Auslander algebras of type A.} https://arxiv.org/abs/2511.22655.

\bibitem[XZ]{XZ} Xiong, B.; Zhang, P.: {\it Gorenstein-projective modules over triangular matrix Artin algebras.}  J. Algebra Appl. 11, No. 4, (2012). 

\bibitem[Y]{Ye} Yekutieli, A.: \emph{Derived categories}, Cambridge Studies in Advanced Mathematics, 183. Cambridge University Press, Cambridge, 2020.

\end{thebibliography}
\end{document}